\theoremstyle{plain}
\newtheorem{theorem}{Theorem}
\newtheorem{lemma}{Lemma}
\newtheorem{prop}[lemma]{Proposition}
\theoremstyle{definition}
\newtheorem{defn}[lemma]{Definition}
\theoremstyle{remark}
\newtheorem*{remark}{Remark}
\DeclareMathOperator{\dist}{dist}
\DeclareMathOperator{\argmin}{arg min}
\DeclareMathOperator{\divop}{div}
\newcommand{\ie}{\textit{i.e.}}
\newcommand{\ud}{\,\mathrm{d}}
\newcommand{\rd}{\mathrm{d}}
\newcommand{\RR}{\mathbb{R}}
\newcommand{\Or}{\mathcal{O}}
\newcommand{\wt}[1]{\widetilde{#1}}
\newcommand{\wh}[1]{\widehat{#1}}
\DeclareFontFamily{U}{mathx}{\hyphenchar\font45}%
   \DeclareFontShape{U}{mathx}{m}{n}{<->mathx10}{}%
   \DeclareSymbolFont{mathx}{U}{mathx}{m}{n}%
   \DeclareMathAccent{\widebar}{0}{mathx}{"73}%
  \newcommand{\widebar}[1]{\overline{#1}}%
\newcommand{\wb}[1]{\widebar{#1}}
\newcommand{\mc}[1]{\mathcal{#1}}
\newcommand{\ms}[1]{\mathscr{#1}}
\newcommand{\mf}[1]{\mathsf{#1}}
\newcommand{\abs}[1]{\lvert#1\rvert}
\newcommand{\norm}[1]{\left\lVert#1\right\rVert}
\newcommand{\mres}{\mathbin{\vrule height 1.3ex depth 0pt width
0.13ex\vrule height 0.13ex depth 0pt width 1ex}}
\newcommand{\es}{\mathrm{es}}
\newcommand{\eff}{\mathrm{eff}}
\title{An isoperimetric problem with Coulomb repulsion and
  attraction to a background nucleus}
\author{Jianfeng Lu} \address{Departments of Mathematics, Physics, and
  Chemistry, Duke University, Durham, NC 27708, USA}
\email{jianfeng@math.duke.edu}
\author{Felix Otto} \address{Max Planck Institute for Mathematics in
  the Sciences, Inselstr. 22 04103 Leipzig, Germany} \email{otto@mis.mpg.de}
\date{\today}
\thanks{The research of J.L.~was supported in part by the Alfred
  P.~Sloan Foundation and the National Science Foundation grant
  DMS-1312659. J.L.~would like to thank the warm hospitality of the
  Max Planck Institute for Mathematics in the Sciences, where part of
  the work is done.}
\begin{document}

\begin{abstract}
  We study an isoperimetric problem the energy of which contains the
  perimeter of a set, Coulomb repulsion of the set with itself, and
  attraction of the set to a background nucleus as a point charge with
  charge $Z$. For the variational problem with constrained volume $V$,
  our main result is that the minimizer does not exist if $V - Z$ is
  larger than a constant multiple of $\max\bigl(Z^{2/3}, 1\bigr)$. The
  main technical ingredients of our proof are a uniform density lemma
  and electrostatic screening arguments.
\end{abstract} 

\maketitle

In this work, we study an energy functional for three-dimensional sets $\Omega \subset \RR^3$, given by 
\begin{equation}\label{eq:NLIP}
  \mf{E}_Z(\Omega) 
  := \abs{\partial \Omega} + \frac{1}{2} \iint_{\Omega \times \Omega} 
  \frac{1}{\abs{x - y}} \ud x \ud y - \int_{\Omega} \frac{Z}{\abs{x}},
\end{equation}
where $\abs{\partial\Omega}$ is the perimeter of the set $\Omega$ and
$Z \geq 0$ is given.  Our main result is the following theorem.
\begin{theorem}\label{thm:nonexist}
  There exists a universal constant $M$ such that the variational problem
  \begin{equation}\label{eq:NLIPvar}
    E_Z(V) = \inf_{\abs{\Omega} = V} \mf{E}_Z(\Omega)
  \end{equation}
  does not have a minimizer if $V \geq Z + M \max(Z^{2/3}, 1)$. 
\end{theorem}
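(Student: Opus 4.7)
My strategy is proof by contradiction. Suppose $\Omega$ is a minimizer of \eqref{eq:NLIPvar} with $\abs{\Omega}=V\geq Z+M\max(Z^{2/3},1)$ for some large universal constant $M$. I will construct an admissible competitor of strictly smaller energy by transporting a piece of $\Omega$ to infinity. The construction rests on a uniform density lemma (to control the geometry of $\Omega$) and an electrostatic screening estimate (to exploit the assumption $V-Z\geq M\max(Z^{2/3},1)$).

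The first step is the uniform density lemma, which I expect to imply that a minimizer is essentially contained in a ball $B_{R_0}$ of radius $R_0\lesssim V^{1/3}$, with uniform density bounds at every boundary point; heuristically, \emph{tails} of $\Omega$ at large radii cannot be stabilized by the distant nuclear attraction. Combining this with the co-area identity $\int_0^\infty\abs{\Omega\cap\partial B_R}\,\ud R=V$, an averaging argument over a carefully chosen interval of radii should produce a sphere $\partial B_R$ along which both the slice area $\abs{\Omega\cap\partial B_R}$ is controllably small and the outer mass $m:=\abs{\Omega\cap B_R^c}$ lies in a prescribed range, which will be chosen of order $V-Z$.

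With such an $R$ in hand, set $\Omega_{\text{in}}:=\Omega\cap B_R$ and $\Omega_{\text{out}}:=\Omega\cap B_R^c$, and define the competitor $\Omega_t:=\Omega_{\text{in}}\cup(\Omega_{\text{out}}+te)$ for a unit vector $e$ and $t\to\infty$. A direct computation, exploiting the decay of the cross self-repulsion and of the attraction of the translated piece, yields
\begin{equation*}
\lim_{t\to\infty}\mf{E}_Z(\Omega_t)-\mf{E}_Z(\Omega)=2\abs{\Omega\cap\partial B_R}-\iint_{\Omega_{\text{in}}\times\Omega_{\text{out}}}\frac{1}{\abs{x-y}}\,\ud x\,\ud y+\int_{\Omega_{\text{out}}}\frac{Z}{\abs{x}}\,\ud x,
\end{equation*}
and it suffices to show that the right-hand side is strictly negative. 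The core estimate is a screening-type lower bound $\iint_{\Omega_{\text{in}}\times\Omega_{\text{out}}}\abs{x-y}^{-1}\,\ud x\,\ud y\gtrsim\int_{\Omega_{\text{out}}}(V-m)/\abs{x}\,\ud x$, up to lower-order multipole corrections, reflecting that from outside $B_R$ the set $\Omega_{\text{in}}$ acts to leading order as a point charge $V-m$ at the origin (Newton's theorem). The net electrostatic gain is thus at least of order $(V-m-Z)m/R_0$; with $m$ comparable to $V-Z$ and $V-Z\geq M\max(Z^{2/3},1)$, it beats the perimeter loss, of order at most $V^{2/3}$, once $M$ is sufficiently large.

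The main obstacle is the screening step. Newton's theorem is sharp only for spherically symmetric distributions; for a general minimizer one must control multipole corrections, which requires structural information about $\Omega_{\text{in}}$ (e.g.\ effective centering near the origin, obtained via the Euler--Lagrange equation and the nuclear attraction). Arranging the estimate so that the threshold $\max(Z^{2/3},1)$ emerges naturally and uniformly across both regimes $Z\gg 1$ and $Z\lesssim 1$ is where the bulk of the technical work is expected to lie.
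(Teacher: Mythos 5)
Your overall strategy (translate the outer piece to infinity, estimate the energy change, and argue the cross-Coulomb repulsion beats the lost nuclear attraction by a screening estimate) is in the right family, and is close in spirit to the paper's use of a competitor $\wb{\Omega}=\Omega\cap(B_{r_0}\cup\bigcup_k B_1(x_k))$. But there are two genuine gaps that prevent your sketch from closing.

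First, the screening estimate $\iint_{\Omega_{\text{in}}\times\Omega_{\text{out}}}\abs{x-y}^{-1}\gtrsim\int_{\Omega_{\text{out}}}(V-m)/\abs{x}$ is false for a generic $\Omega_{\text{in}}\subset B_R$. Without structural information, the best pointwise bound is $\int_{\Omega_{\text{in}}}\abs{x-y}^{-1}\ud y\geq(V-m)/(\abs{x}+R)$, and for $\abs{x}\sim R$ this is only $(V-m)/(2\abs{x})$. The net potential is then at best $(V-m-2Z)/(2\abs{x})$, which is \emph{negative} when $V-m<2Z$ --- i.e.\ in the entire regime of interest. A factor-of-two loss is fatal here precisely because $V-m\approx Z$; this is not a ``lower-order multipole correction,'' it is a leading-order obstruction. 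To remove it one must first establish that the minimizer essentially fills the small ball $B_{R_Z}$ of volume $Z$, so that Newton's theorem applies almost exactly to the inner charge. That is the content of Proposition~\ref{prop:closeball} in the paper, which is proved by a quantitative electrostatic argument (the Coulomb-norm lower bound in \eqref{eq:escalc}--\eqref{eq:Coulombbound}), and is entirely absent from your sketch. Note also an internal inconsistency: with $m$ ``comparable to $V-Z$'' your heuristic gain $(V-m-Z)m/R_0$ is essentially zero, since $V-m-Z\approx 0$; you would need to optimize $m$ and then the resulting bound $V-Z\gtrsim Z^{1/2}$ would be strictly stronger than the $Z^{2/3}$ threshold, a signal that the estimate cannot be correct.

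Second, and more structurally: even after Proposition~\ref{prop:closeball}, the screening is complete only up to an error $\sim Z_{\eff}=(V-Z)^{1/2}Z^{1/3}$, which is exactly of the order one is trying to beat. A single cut at a well-chosen sphere, paired with a linear-in-volume gain, is not enough. The paper's mechanism for closing the gap is a logarithmic improvement in the Coulomb lower bound (Lemma~\ref{lem:CoulombEnergy}): using the Density Lemma and a connectedness argument (Lemma~\ref{lem:connect}), one finds for each point of the far-out set $\wh{\Omega}$ a chain of $M\sim H$ unit balls each carrying unit mass, yielding $\gtrsim(V-Z)\ln H$ cross-Coulomb energy. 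This $\ln H$ is what strictly beats the linear upper bound $E_Z(V)-\mf{E}_Z(\wb\Omega)\lesssim V-Z$ and produces the contradiction. Your averaging/co-area argument bounds the extra perimeter, which is also needed, but it does not substitute for this logarithmic mechanism. In short: you need (i) the electrostatic ``close to a ball'' step and (ii) a second layer of screening charges together with the density-lemma-plus-connectedness chain argument that produces the logarithm; both are missing.
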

On the other hand, when the volume of the set is small, the minimizer
of the variational problem exists and is given by a ball centered at
the origin.
\begin{theorem}\label{thm:ball}
  There exists a universal constant $m$ such that for any $V \leq Z + m$, the
  unique minimizer of the variational problem
  \begin{equation*}
    E_Z(V) = \inf_{\abs{\Omega} = V} \mf{E}_Z(\Omega)
  \end{equation*}
  for $Z > 0$ is given by $\Omega = B_{(V / \abs{B_1})^{1/3}}(0)$.
\end{theorem}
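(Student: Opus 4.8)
The plan is a calibration argument, exact when $V\le Z$ and needing only a perturbation for $Z<V\le Z+m$. Write $B:=B_{(V/\abs{B_1})^{1/3}}(0)$, of radius $r_V$; for a competitor $\Omega$ with $\abs{\Omega}=V$, put $g:=\mathbf{1}_\Omega-\mathbf{1}_B$ (so $\int g=0$), and let $u_B(x):=\int_B\abs{x-y}^{-1}\ud y-Z\abs{x}^{-1}$, the Newtonian potential of $\mathbf{1}_B-Z\delta_0$. Expanding the quadratic Coulomb term around $\mathbf{1}_B$ and using that the attraction term is linear in $\mathbf{1}_\Omega$ yields the exact identity
\begin{equation*}
\mf{E}_Z(\Omega)-\mf{E}_Z(B)=\bigl(\abs{\partial\Omega}-\abs{\partial B}\bigr)+\frac12\iint\frac{g(x)g(y)}{\abs{x-y}}\ud x\ud y+\int_{\RR^3}g\,(u_B-\psi_0),
\end{equation*}
where $\psi_0$ is the value of $u_B$ on $\partial B$; this is a constant, since $u_B$ is radial (equivalently, $u_B$ on $\partial B$ equals the Lagrange multiplier minus the constant mean curvature of $\partial B$ in the Euler--Lagrange equation). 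The first term is $\ge0$ by the isoperimetric inequality, the second by positive-definiteness of the Coulomb form, and, since $\int g=0$, the third equals $\int_{\Omega\setminus B}(u_B-\psi_0)-\int_{B\setminus\Omega}(u_B-\psi_0)$.

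I would first dispose of $V\le Z$. Explicitly $u_B(x)=(V-Z)/\abs{x}$ for $\abs{x}\ge r_V$, and for $\abs{x}\le r_V$ the radial derivative $u_B'$ is $\ge0$ precisely on $\{\abs{x}\le(3Z/4\pi)^{1/3}\}$, which contains $B$ exactly when $V\le Z$. So for $V\le Z$ one has $u_B\le\psi_0$ on $B$ and $u_B\ge\psi_0$ outside $B$, both boundary integrals in the third term have the favourable sign, and hence $\mf{E}_Z(\Omega)\ge\mf{E}_Z(B)$; equality forces the Coulomb form of $g$ to vanish, i.e. $g\equiv0$, i.e. $\Omega=B$. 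This settles the theorem for all $V\le Z$ (one could take $m=0$ there), with no separate existence argument.

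For $Z<V\le Z+m$ the sign of $u_B-\psi_0$ is adverse only outside $B$, where $0<\psi_0-u_B\le(V-Z)/r_V$, and in a thin shell inside $\partial B$, where $0<u_B-\psi_0\le 3(V-Z)/r_V$. The key is that $(V-Z)/r_V\le\min\{m,V\}/r_V\lesssim m^{2/3}$ \emph{uniformly in $Z$} (use $V-Z\le m$, $V-Z\le V$, and that $V/r_V$ is a fixed constant times $V^{2/3}$). Hence $\int g\,(u_B-\psi_0)\ge-Cm^{2/3}\abs{\Omega\triangle B}$ for universal $C$, so
\begin{equation*}
\mf{E}_Z(\Omega)-\mf{E}_Z(B)\ge\bigl(\abs{\partial\Omega}-\abs{\partial B}\bigr)+\frac12\iint\frac{g(x)g(y)}{\abs{x-y}}\ud x\ud y-Cm^{2/3}\abs{\Omega\triangle B}.
\end{equation*}
If the Fraenkel asymmetry of $\Omega$ is bounded below, the quantitative isoperimetric inequality beats the error. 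If $\Omega$ is $L^1$-close to a translate $B_{r_V}(x_0)$, I would argue otherwise: for $\abs{x_0}$ not small (still $\lesssim r_V$) the attraction --- already inside $u_B$ --- wins, since $\mf{E}_Z(B_{r_V}(x_0))-\mf{E}_Z(B)=Z\bigl(\int_B\abs{x}^{-1}-\int_{B_{r_V}(x_0)}\abs{x}^{-1}\bigr)\gtrsim Z\abs{x_0}^2$; for $\abs{x_0}$ small, I would use the second variation of $\mf{E}_Z$ at $B$, diagonalized by spherical harmonics with $Y_{\ell m}$-eigenvalue $q_\ell=\tfrac12\bigl(\ell(\ell+1)+Z-2\bigr)+4\pi\lambda\bigl(\tfrac1{2\ell+1}-\tfrac13\bigr)$, $\lambda=3V/8\pi$, so that $q_1=Z/2>0$ (the attraction breaks translation invariance) and $q_\ell\ge\tfrac12\bigl(\ell(\ell+1)+Z-V-2\bigr)\ge\tfrac12(4-m)>0$ for $\ell\ge2$ once $m<4$; regularity for almost-minimizers of perimeter (the nonlocal and linear terms being lower order) upgrades $L^1$-closeness to $C^1$-closeness, so the expansion applies.

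The hard part is precisely this last regime. Away from it everything is robust, but near $B$ the functional is only \emph{almost} translation invariant --- the attraction pins the centre at the origin, but merely to quadratic order, with $q_1=Z/2$ degenerating as $Z\to0^+$, which is exactly the range in which $V\le Z+m$ still permits $V\sim m$. One therefore cannot discard $\tfrac12\iint g(x)g(y)\abs{x-y}^{-1}$: it is what cancels the negative, first-order-in-$x_0$ part of $\int g\,(u_B-\psi_0)$. The delicate point is to run all of this with a constant $m$ that is genuinely universal --- uniform both as $Z\to0^+$ and as $Z\to\infty$.
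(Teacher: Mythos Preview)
Your calibration identity and the $V\le Z$ case are correct, and the expansion of $\mf{E}_Z(\Omega)-\mf{E}_Z(B)$ is essentially the same starting point the paper uses. But the paper closes the argument for \emph{all} $V\le Z+m$ in three lines by invoking a different quantitative isoperimetric inequality, and your proposed route for $Z<V\le Z+m$ has a genuine gap.

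The paper rescales to $\abs{\Omega}=\abs{B_1}$ and appeals to Julin's inequality (Lemma~\ref{lem:quantisop}): $\abs{\partial\Omega}-\abs{\partial B_1}\ge c_{\text{isop}}\,\gamma(\Omega)$ with $\gamma(\Omega)=\int_{B_1}\abs{x}^{-1}-\int_\Omega\abs{x}^{-1}$. Writing $u=\abs{\,\cdot\,}^{-1}\ast(\chi_{B_1}-\chi_\Omega)$ one has $u(0)=\gamma(\Omega)$, and since $u$ is superharmonic in $B_1$ the Coulomb difference obeys $\tfrac12\bigl(D(\chi_{B_1},\chi_{B_1})-D(\chi_\Omega,\chi_\Omega)\bigr)\le\int_{B_1}u\le\abs{B_1}\,u(0)$. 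This gives immediately $E_{V,Z}(B_1)-E_{V,Z}(\Omega)\le(V-Z-c_{\text{isop}})\gamma(\Omega)\le0$ whenever $V-Z<c_{\text{isop}}$, with equality only for $\Omega=B_1$. No case split, no second variation, no regularity theory; the constant $m=c_{\text{isop}}$ is automatically uniform in $Z$.

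The gap in your case~(a) is a scaling mismatch. With Fraenkel asymmetry $\alpha(\Omega)\ge\epsilon_0$ the \emph{standard} quantitative isoperimetric inequality yields $\abs{\partial\Omega}-\abs{\partial B}\gtrsim\abs{\partial B}\,\epsilon_0^2\sim V^{2/3}\epsilon_0^2$, whereas your error $-Cm^{2/3}\abs{\Omega\triangle B}$ can be as bad as $-Cm^{2/3}V$ (take $\Omega$ disjoint from $B$). For large $Z$, hence large $V$, you would need $\epsilon_0^2\gtrsim m^{2/3}V^{1/3}$, which is impossible. Closing this forces you to use the Coulomb term $\tfrac12\norm{g}_C^2$ quantitatively against $\abs{\Omega\triangle B}$, and doing so in a way that scales correctly is precisely what Julin's $\gamma(\Omega)$ accomplishes: it is an asymmetry \emph{linear} in the Newtonian potential of $g$, not quadratic in the $L^1$ norm, so it matches both the attraction and the repulsion terms without any $V$-dependent loss. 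Your case~(b) is also only a sketch --- the upgrade from $L^1$- to $C^1$-closeness uniformly in $Z$, and the handling of the degenerating mode $q_1=Z/2$ as $Z\to0^+$, are real obstacles you have identified but not overcome.
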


The study of the variational problem \eqref{eq:NLIPvar} is a natural
extension of our previous work \cite{LuOtto:14} (see also the study by
Kn\"upfer and Muratov in \cites{KnupferMuratov, KnupferMuratov:14} and
a recent work by Frank and Lieb in \cite{FrankLieb}), in particular,
Theorem~\ref{thm:nonexist} is an extension of the nonexistence result
when $Z = 0$ \cite{LuOtto:14}*{Theorem 2}. It turns out that our proof
of the extension, presented in Section~\ref{sec:nonexist}, requires
several new ideas. One of the essential ideas of our current proof is
to explore the fact that if $\Omega$ has a large volume, far away from
the nucleus, the nucleus charge should be (electrically) screened, and
hence we are back to the situation without a background potential. To
make this idea work, we need to analyze the electrostatic part of the
problem and to use the following density lemma.
\begin{lemma}[Density Lemma]\label{lem:density}
  Let $\Omega$ be a minimizer of \eqref{eq:NLIPvar} of prescribed
  volume $V$. Then for all $x$ such that $x \in \Omega$ in the
  measure-theoretic sense, namely
  \begin{equation}\label{eq:belongset}
    \abs{\Omega \cap B_r(x)} > 0, \quad \forall\, r > 0,
  \end{equation}
  and such that it holds
  \begin{equation*}
    \phi_x(y) := \int_{\Omega - B_1(x)} \frac{1}{\abs{y - z}} \ud z - \frac{Z}{\abs{y}} > 0,
    \qquad \forall \, y \in B_1(x),
  \end{equation*}
  we have
  \begin{equation}\label{eq:density}
    \abs{\Omega \cap B_1(x)} \geq \delta, 
  \end{equation}
  where $\delta > 0$ is a universal constant.
\end{lemma}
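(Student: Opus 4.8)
\emph{Strategy.} The plan is to run a comparison argument that produces a differential inequality for the measure of $\Omega$ near $x$. Put $m(r):=\abs{\Omega\cap B_r(x)}$; this is nondecreasing and Lipschitz on $[0,1]$, $m(r)>0$ for every $r>0$ because $x\in\Omega$ in the measure‑theoretic sense, and for a.e.\ $r$ the usual slicing formulas apply, in particular $m'(r)=\mc{H}^2\bigl(\Omega^{(1)}\cap\partial B_r(x)\bigr)$ and $\abs{\partial(\Omega\setminus B_r(x))}-\abs{\partial\Omega}=2m'(r)-\abs{\partial(\Omega\cap B_r(x))}$. The target is to find universal constants $c>0$ and $\delta_0>0$ such that $m(1)<\delta_0$ forces $m'(r)\ge c\,m(r)^{2/3}$ for a.e.\ $r$ in a fixed subinterval of $(0,1)$; then $\tfrac{\ud}{\ud r}m(r)^{1/3}=\tfrac13 m(r)^{-2/3}m'(r)\ge\tfrac c3$ and integration (letting the left endpoint tend to $0$, where $m(\cdot)^{1/3}\to 0$) give $m(1)\ge(c/3)^3$, so the conclusion holds with $\delta:=\min\{\delta_0,(c/3)^3\}$.

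Two structural facts drive the comparison. First, the hypothesis $\phi_x>0$ on $B_1(x)$ forces $\abs{x}\ge1$ when $Z>0$: the Newtonian potential of $\Omega-B_1(x)$ is bounded on $\RR^3$ whereas $Z/\abs{y}\to\infty$ as $y\to0$, so $0$ cannot be interior to $B_1(x)$; hence the nuclear singularity stays outside $B_1(x)$ and $\phi_x$ is there a positive harmonic function. Second, excising $\Omega\cap B_r(x)$ ($r\le1$) strictly lowers the nonlocal part of $\mf{E}_Z$: writing $D(A)=\tfrac12\iint_{A\times A}\abs{u-v}^{-1}\ud u\,\ud v$ and $N(A)=\int_A Z\abs{u}^{-1}\ud u$, the splitting of $\Omega$ that defines $\phi_x$ gives, the $N(\Omega\cap B_r(x))$ contributions cancelling,
\[
  \bigl[D-N\bigr]\bigl(\Omega\setminus B_r(x)\bigr)-\bigl[D-N\bigr](\Omega)
  = -\int_{\Omega\cap B_r(x)}\phi_x
  - \iint_{(\Omega\cap B_r(x))\times((\Omega\cap B_1(x))\setminus B_r(x))}\frac{\ud u\,\ud v}{\abs{u-v}}
  - D\bigl(\Omega\cap B_r(x)\bigr)\ \le\ 0 .
\]

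For a.e.\ $r$ the competitor I would use is: delete $\Omega\cap B_r(x)$, and reinstate the lost volume $m(r)$ by attaching a lump of volume $m(r)$ (a half‑ball, or a thin pancake) to a portion of $\partial\Omega$ far from the nucleus that, at the length scale $m(r)^{1/3}$, looks flat. Deleting $\Omega\cap B_r(x)$ changes the perimeter by $2m'(r)-\abs{\partial(\Omega\cap B_r(x))}\le 2m'(r)-c_*\,m(r)^{2/3}$, $c_*=(36\pi)^{1/3}$ being the sharp isoperimetric constant; a flat attachment of volume $m(r)$ costs only $c_\flat\,m(r)^{2/3}$ in perimeter with a universal $c_\flat<c_*$ (one pays for a flat disk, not a round sphere), and changes $D-N$ by at most $O(m(r)^{5/3})$ from its self‑energy plus cross‑terms that are made small by placing it far from the origin and from the bulk of $\Omega$. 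Combining with the displayed inequality, minimality of $\Omega$ yields
\[
  0\ \le\ \mf{E}_Z(\Omega')-\mf{E}_Z(\Omega)\ \le\ 2m'(r)-(c_*-c_\flat)\,m(r)^{2/3}+C\,m(r)^{5/3}+(\text{cross-terms}),
\]
so that once $m(1)<\delta_0$ is small enough to absorb $C\,m(r)^{5/3}$ and the cross‑terms into $\tfrac12(c_*-c_\flat)\,m(r)^{2/3}$, one gets $m'(r)\ge c\,m(r)^{2/3}$ with $c=(c_*-c_\flat)/4$, and the first paragraph finishes the proof; the gain $\int_{\Omega\cap B_r(x)}\phi_x>0$ coming from the displayed identity provides additional room.

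I expect two things to carry the weight. The main obstacle is the volume reinstatement: the excised mass must be returned at a cost \emph{strictly} below the isoperimetric gain of the excision — reinserting a round ball far away is exactly borderline and useless — which forces one to exhibit, with universal constants, a piece of $\partial\Omega$ away from the nucleus that is flat at scale $m(r)^{1/3}$; producing such a piece for an arbitrary minimizer is where the screening principle (far from the nucleus $\Omega$ should behave as in the $Z=0$ problem) together with the regularity and the a priori estimates for minimizers must be invoked. The second, more routine, point is the bookkeeping of the Coulomb cross‑terms created when mass is relocated — the interactions of the deleted and reattached lumps with the remainder of $\Omega$ and with the nucleus — handled using $\abs{x}\ge1$, Newton's theorem for the spherically symmetric reattached lump, and Harnack/interior‑gradient estimates for the positive harmonic function $\phi_x$ on $B_1(x)$, so that the leading pieces cancel against $\int_{\Omega\cap B_r(x)}\phi_x$.
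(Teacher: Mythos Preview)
Your differential–inequality framework for $m(r)=\abs{\Omega\cap B_r(x)}$ is the same engine the paper uses, and your observation that sending the excised mass to infinity is exactly borderline (cost $c_*\,m(r)^{2/3}$) is correct and is precisely why the paper remarks that the previous global–dilation argument from the $Z=0$ case fails here. The gap is in your proposed cure for this borderline situation.

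You want to reinstate the volume $m(r)$ by attaching a half–ball to a patch of $\partial\Omega$ that is ``flat at scale $m(r)^{1/3}$'' and far from the nucleus, at perimeter cost $c_\flat\,m(r)^{2/3}$ with a universal $c_\flat<c_*$. Producing such a patch is the whole difficulty, and you do not supply an argument. The natural route---invoke regularity of $\partial\Omega$ away from the nucleus---is circular: any quantitative $C^{1,\alpha}$ bound on $\partial\Omega$ at a point $p$ depends on the size of the effective potential and its gradient near $p$, and in this problem these are controlled only after one has established screening, which in turn uses the density lemma you are trying to prove. Moreover, even granting some regularity, you would need a patch whose flatness constants are \emph{universal} (independent of $Z$, $V$, and the particular minimizer); nothing in your outline pins this down. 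The Harnack/gradient estimates you mention for $\phi_x$ on $B_1(x)$ are fine for the bookkeeping inside $B_1(x)$, but they say nothing about the geometry of $\partial\Omega$ elsewhere.

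The paper avoids this obstacle by a genuinely different mechanism. Instead of attaching new mass to a flat piece of boundary, it first rescales so that the set has volume $1$ inside a large ball $B_R$ and the (rescaled) potential satisfies $\sup_{B_R}\abs{\nabla\phi}\le 1/R$; the goal becomes $R\lesssim 1$. Then the lost volume is recovered by \emph{deforming} the remaining part of $\Omega$ itself, either by a radial stretch of an annulus (when most of the mass sits near $\partial B_R$) or by the flow of a compactly supported vector field constructed from a Neumann problem (when a unit–radius ball inside $B_R$ contains a definite fraction of the mass). Because one deforms a set of volume $\sim 1$, gaining volume $V(h)$ costs only $\lesssim V(h)$ in perimeter---linear rather than $V(h)^{2/3}$---so the isoperimetric gain from the excision dominates without any need to locate flat boundary. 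A preliminary case split, based on the Harnack estimate $\sup\abs{\nabla\phi}\lesssim\inf\phi+1$, handles the regime where $\abs{\nabla\phi}$ is large (there the strict positivity $\int_{\Omega\cap B_r(x)}\phi_x$ itself provides the gain). This local–deformation idea is the missing ingredient in your proposal.
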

Roughly speaking, the lemma states that the minimizing set cannot be
too ``thin'' where the Coulomb potential generated by the nuclear
charge is screened.  We emphasize that the constant $\delta$ in the
above density lemma is a universal constant, in particular, it does
not depend on the gradient of the Coulomb potential generated by
$\Omega$ which is potentially large in the ball.

The proof of Lemma~\ref{lem:density} is similar in spirit to that of
the density lemma~\cite{LuOtto:14}*{Lemma 4} in our previous
work. However, the previous argument does not apply as competitors are
constructed by deforming the set by a global dilation, which might
tremendously increase the energy in the current case and hence is not
useful. We thus have to turn to more delicate arguments that utilize
local deformations of the set. The details are given in
Section~\ref{sec:density}.

The version of Theorem~\ref{thm:ball} when $Z = 0$ was proved in
\cite{KnupferMuratov:14} and later extended to various settings by
\cites{Julin:14, MuratovZaleski, FigalliFusco, BonaciniCristoferi}
(see also related works in \cites{CicaleseSpadaro:13, Topaloglu:13,
  SternbergTopaloglu:11, ChoksiSternberg:07, AcerbiFuscoMorini:13,
  GoldmanNovagaRuffini}).  Theorem~\ref{thm:ball} extends this type of
results in the presence of an external potential. Our proof is close
to the idea in \cite{Julin:14} which uses a version of a quantitative
isoperimetric inequality that measures the deficit of a set from a
ball by the Coulomb potential. The proof is given in
Section~\ref{sec:ball}.

\medskip 

From the point of view of physics, Theorem~\ref{thm:nonexist} is related to the ionization
conjecture in quantum mechanics, which states that the number of
electrons that can be bound to an atomic nucleus of charge $Z$ cannot
exceed $Z + 1$. Theorem~\ref{thm:nonexist} gives an upper bound of the
volume of the set for the nucleus of charge $Z$ in the nonlocal
isoperimetric model. The ionization conjecture, while still open for
the Schr\"odinger equation, has been studied by many authors for
different types of models in quantum mechanics (see
e.g. \cites{BenguriaBrezisLieb:81, BenguriaLieb:85, FeffermanSeco:90,
  Lieb:84, LiebSigalSimonThirring:88, Sigal:82, Solovej:91,
  Solovej:03}). Our study is motivated by this question, as the model
\eqref{eq:NLIP} can be understood as a ``sharp interface version'' of
the Thomas-Fermi-Dirac-von~Weisz\"acker (TFDW) model, a mean field type
approximation of the many-body Schr\"odinger equation. See
\cite{LuOtto:14} for more remarks on the connection of the nonlocal
isoperimetric model to the TFDW model.

\smallskip 

\textbf{Notation.} For $r > 0$, we denote by $B_r(x)$ the ball with
center $x$ and radius $r$, and $B_r$ if the center is the origin. For
$0 < r_1 < r_2$, we denote $A_{r_1, r_2}(x)$ the annulus with center
$x$, inner radius $r_1$ and outer radius $r_2$. Similarly $A_{r_1,
  r_2}$ if the center is the origin. $C$ and $c$ denote generic
constants (in particular, independent of $Z$ and $V$) whose value may
change from line to line. Moreover, to avoid specifying unnecessary
constants, we will use the notation $x \lesssim y$ which stands for $x
\leq C y$ for some universal constant $C$. We write $x \sim y$ if both
$x\lesssim y$ and $x \gtrsim y$ hold. Finally, we use the notation $x
\ll y$ to denote that $y \geq C x$ for a sufficiently large constant
$C$.

\section{Proof of Theorem~\ref{thm:nonexist}}
\label{sec:nonexist}

In this section we prove Theorem~\ref{thm:nonexist} assuming the
Density Lemma~\ref{lem:density}. The proof of the latter is given in
Section~\ref{sec:density}. We will assume $Z > 0$, as the case $Z = 0$
is treated in our previous work \cite{LuOtto:14}. In fact, to ease the
presentation, we will assume that $Z$ is sufficiently large, which is
the most interesting scenario for Theorem~\ref{thm:nonexist}. We will
also make the standing assumption that $V - Z \gg Z^{2/3}$ since
otherwise there is nothing to prove.

\subsection{Electrostatic energy}
The first step is to consider the electrostatic part of the energy
\begin{equation}
  \mf{E}_{\es}(\Omega) := 
  \frac{1}{2} \iint_{\Omega\times\Omega} \frac{1}{\abs{x-y}} \ud x \ud y -
  \int_{\Omega} \frac{Z}{\abs{x}} \ud x. 
\end{equation}
We define a radius $R_Z>0$ such that $\abs{B_{R_Z}} = Z$,
\textit{i.e.,}~$R_Z = (Z/\abs{B_1})^{1/3}$.  Let $E_{\es}(Z)$ be the
electrostatic energy of $B_{R_Z}$:
\begin{equation}
  E_{\es}(Z) = \frac{1}{2} \iint_{B_{R_Z}\times B_{R_Z}} \frac{1}{\abs{x-y}} 
  \ud x \ud y - \int_{B_{R_Z}} \frac{Z}{\abs{x}} \ud x. 
\end{equation}
To simplify notation, in the sequel, we denote $\chi_Z :=
\chi_{B_{R_Z}}$ the characteristic function of the ball $B_{R_Z}$.

For the electrostatic problem, it is natural to introduce 
\begin{defn} The Coulomb norm of $f$ is given by 
  \begin{equation*}
    \norm{f}_C := \Biggl( \iint \frac{f(x)f(y)}{\abs{x - y}} \ud x \ud y \Biggr)^{1/2} = \Biggl( 4\pi \int \frac{\abs{\wh{f}(k)}^2}{\abs{k}^2} \ud k  \Biggr)^{1/2}.
  \end{equation*}
\end{defn}
The non-negativeness of the norm is obvious from the Fourier
representation. The Fourier representation also yields the duality
with the homogeneous Sobolev space $\dot{H}^1(\RR^3)$:
\begin{equation}\label{eq:coulombduality}
  \int f \psi \leq \frac{1}{\sqrt{4\pi}} \norm{f}_C \Biggl( \int \abs{\nabla \psi}^2 \Biggr)^{1/2}.
\end{equation}

The next proposition states that the ball $B_{R_Z}$ minimizes the
electrostatic part of the energy.
\begin{prop}\label{prop:Ees}
  Provided $V \geq Z$, we have 
  \begin{equation}\label{eq:Ees}
    \inf_{\abs{\Omega} = V} \mf{E}_{\es}(\Omega) = \inf_{\abs{\Omega} = Z} 
    \mf{E}_{\es}(\Omega) = E_{\es}(Z).
  \end{equation}
\end{prop}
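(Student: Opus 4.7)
The plan is to exploit Newton's theorem: the characteristic function $\chi_Z$ of $B_{R_Z}$ has the same Newtonian potential as the point charge $Z\delta_0$ outside $B_{R_Z}$, and strictly less inside. This suggests ``renormalizing'' the singular nuclear term in $\mf{E}_{\es}(\Omega)$ by replacing $Z\delta_0$ with $\chi_Z$, which should turn the energy into a perfect Coulomb square plus a non-negative correction supported in $B_{R_Z}$. Concretely, I would expand
\[
\tfrac{1}{2}\norm{\chi_\Omega - \chi_Z}_C^2 = \tfrac{1}{2}\norm{\chi_\Omega}_C^2 - \int_\Omega u_Z\ud x + \tfrac{1}{2}\norm{\chi_Z}_C^2,
\]
where $u_Z(y) := \int \chi_Z(z)/\abs{y-z}\ud z$, and substitute $u_Z(y) = Z/\abs{y}$ for $\abs{y}\ge R_Z$ together with the inequality $u_Z(y) \le Z/\abs{y}$ for $\abs{y} < R_Z$ (both consequences of Newton applied to the ball).

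Setting $w(y) := Z/\abs{y} - u_Z(y) \ge 0$, supported in $B_{R_Z}$, the display above rearranges to
\[
\mf{E}_{\es}(\Omega) = \tfrac{1}{2}\norm{\chi_\Omega - \chi_Z}_C^2 - \int_\Omega w \ud x - \tfrac{1}{2}\norm{\chi_Z}_C^2.
\]
Specializing to $\Omega = B_{R_Z}$ expresses $E_{\es}(Z)$ in terms of $w$ (the Coulomb-square term vanishes); subtracting eliminates the constant $\norm{\chi_Z}_C^2$ and gives the key identity
\[
\mf{E}_{\es}(\Omega) - E_{\es}(Z) = \tfrac{1}{2}\norm{\chi_\Omega - \chi_Z}_C^2 + \int_{B_{R_Z}\setminus\Omega} w(x)\ud x.
\]
Both summands are manifestly non-negative, so $\mf{E}_{\es}(\Omega) \ge E_{\es}(Z)$ for every $\Omega$, regardless of volume. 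This already yields the second equality in \eqref{eq:Ees}, attained by $B_{R_Z}$ when $V = Z$.

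For the first equality, when $V > Z$, I would supply the matching upper bound by a dispersion construction: set $\Omega_n := B_{R_Z} \cup S_n$, where $S_n$ is a thin spherical shell of volume $V - Z$ with inner radius $n \gg R_Z$. Because $B_{R_Z} \subset \Omega_n$, the correction integral vanishes and the identity collapses to $\mf{E}_{\es}(\Omega_n) - E_{\es}(Z) = \tfrac{1}{2}\norm{\chi_{S_n}}_C^2$. The self-Coulomb energy of such a shell is of order $(V-Z)^2/n$, which tends to $0$. Since $\abs{\Omega_n} = V$ by construction, this forces the infimum to equal $E_{\es}(Z)$. The only real conceptual step is spotting the right completion of the square above; the sign of $w$ is a classical Newton computation and the dispersion estimate is routine, so I do not anticipate any serious obstacle.
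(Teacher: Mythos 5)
Your proof is correct and takes essentially the same route as the paper: complete the square in the Coulomb norm around $\chi_Z$, use Newton's theorem to see that the residual linear term has a definite sign on $B_{R_Z}$ and vanishes outside it, then disperse the excess volume $V-Z$ to make its Coulomb self-energy arbitrarily small. Your $w$ is just the negative of the paper's potential $u$, and your thin spherical shell plays the same role as the paper's collection of tiny far-apart balls, so the differences are cosmetic.
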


\begin{proof}
  For any $\Omega \subset \RR^3$ with finite volume, 
  \begin{equation}\label{eq:escalc}
    \begin{aligned}
      \mf{E}_{\es}(\Omega) - \mf{E}_{\es}(B_{R_Z}) & = \frac{1}{2}
      \iint \frac{\chi_{\Omega}(x) \chi_{\Omega}(y) - \chi_Z(x)
        \chi_Z(y)}{\abs{x-y}} \ud x \ud y -
      Z \int \frac{\chi_{\Omega}(x) - \chi_Z(x)}{\abs{x}} \ud x \\
      & = \frac{1}{2} \iint \frac{\bigl(\chi_{\Omega}(x) - \chi_Z(x)\bigr)\bigl(\chi_{\Omega}(y) - \chi_Z(y)\bigr)}{\abs{x - y}} \ud x \ud y \\
      & \qquad + \iint \bigl(\chi_{\Omega}(x) - \chi_Z(x)\bigr)
      \frac{\chi_Z(y)}{\abs{x - y}}
      \ud x \ud y - Z \int \frac{\chi_{\Omega}(x) - \chi_Z(x)}{\abs{x}} \ud x \\
      & = \frac{1}{2} \norm{\chi_{\Omega} - \chi_Z}_C^2 + \int \bigl(
      \chi_{\Omega}(x) - \chi_Z(x) \bigr) u(x) \ud x \\
      & \geq \int \bigl( \chi_{\Omega}(x) - \chi_Z(x) \bigr) u(x) \ud
      x,
    \end{aligned}
  \end{equation}
  where we have introduced $u(x)$
  defined as 
  \begin{equation*}
    u(x) = \int \frac{\chi_Z(y)}{\abs{x - y}} \ud y - \frac{Z}{\abs{x}}
    =  \begin{cases}
      2 \pi R_Z^2 + (\abs{B_1} - 2 \pi) \abs{x}^2 - \frac{Z}{\abs{x}}, & \abs{x} \leq R_Z; \\
      0, & \abs{x} \geq R_Z.
    \end{cases}
  \end{equation*}
  % The inequality in the last line of \eqref{eq:escalc} follows as the
  % Coulomb self-interaction is non-negative: For any $f \in L^1$
  % \begin{equation*}
  %   \iint \frac{f(x)f(y)}{\abs{x - y}} \ud x \ud y 
  %   = \frac{1}{4\pi} \int \frac{\abs{\wh{f}(k)}^2}{\abs{k}^2} \ud k \geq 0.
  % \end{equation*} 
  Therefore, we have 
  \begin{equation*}
    \mf{E}_{\es}(\Omega) - \mf{E}_{\es}(B_{R_Z}) \geq \int_{B_{R_Z}} \bigl( \chi_{\Omega}(x) - \chi_Z(x) \bigr) u(x) \ud x \geq 0 
  \end{equation*}
  as both $\chi_{\Omega} - \chi_Z$ and $u$ are non-positive
  inside $B_{R_Z}$. This implies for any $V\geq 0$, 
  \begin{equation*}
    \inf_{\abs{\Omega} = V} \mf{E}_{\es}(\Omega) \geq E_{\es}(Z),
  \end{equation*}
  and hence 
  \begin{equation*}
    \inf_{\abs{\Omega} = Z} \mf{E}_{\es}(\Omega) = E_{\es}(Z).
  \end{equation*}
  To get the first equality in \eqref{eq:Ees}, notice that for $V
  \geq Z$, we have 
  \begin{equation*}
    \inf_{\abs{\Omega} = V} \mf{E}_{\es}(\Omega) \leq \inf_{\abs{\Omega} = Z} 
    \mf{E}_{\es}(\Omega)
  \end{equation*}
  as we can break the excess volume ($V-Z$) into tiny balls and
  place them far away from each other and from the origin, such that
  the Coulomb interaction between them is made arbitrarily small.
\end{proof}

Going back to the full energy, we note an easy upper bound for the
minimum energy of the variational problem \eqref{eq:NLIPvar}.
% We denote by $\mf{E}_0(\Omega)$ the corresponding functional
% when $Z = 0$. \jl{...} 
%
\begin{lemma}\label{lem:Eupperbound}
  For $E_Z(V)$ given as the infimum of the variational problem
  \eqref{eq:NLIPvar}, we have for $0 \leq V' \leq V$ 
  \begin{equation}\label{eq:linearbound}
    E_Z(V) \leq E_Z(V') + E_0(V- V'), 
  \end{equation}
  where $E_0$ denotes $E_Z$ for $Z = 0$. Furthermore, for $E_0$, we
  have
  \begin{equation}\label{eq:E0bound}
    E_0(V) \lesssim  V^{2/3} + V. 
  \end{equation}
\end{lemma}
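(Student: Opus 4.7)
The plan is to prove the two inequalities separately, both by constructing explicit competitors.

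For the subadditivity \eqref{eq:linearbound}, I would fix $\veps > 0$ and pick near-minimizers $\Omega'$ of volume $V'$ for $\mf{E}_Z$ and $\Omega''$ of volume $V - V'$ for $\mf{E}_0$ (that is, sets whose energies are within $\veps$ of the corresponding infima). I would then translate $\Omega''$ by some vector $a \in \RR^3$ with $\abs{a} \to \infty$ and use the test set $\Omega := \Omega' \cup (\Omega'' + a)$. For $\abs{a}$ large the two pieces are disjoint, the perimeter splits additively, and the Coulomb self-interaction of the union equals the sum of the two self-interactions plus a cross term bounded by $\abs{\Omega'}\abs{\Omega''}/\dist(\Omega',\Omega''+a) = O(\abs{a}^{-1})$. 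The attraction term contributes $-Z\int_{\Omega'}\abs{x}^{-1}\ud x - Z\int_{\Omega''}\abs{x+a}^{-1}\ud x$, and the latter is again $O(\abs{a}^{-1})$. Letting $\abs{a}\to\infty$ and $\veps\to 0$ yields \eqref{eq:linearbound}.

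For the bound \eqref{eq:E0bound}, the natural attempt of using a single ball of volume $V$ fails because its self-Coulomb energy scales like $V^{5/3}$. I would instead split $V$ into $N$ equal pieces, each a ball of volume $v = V/N$ and radius $\sim v^{1/3}$, placed at large mutual distance (so that, exactly as above, cross Coulomb terms are negligible). Each such ball contributes perimeter $\sim v^{2/3}$ and self-Coulomb $\sim v^{5/3}$, so the total competitor energy is bounded, up to a universal constant, by
\begin{equation*}
  N v^{2/3} + N v^{5/3} = N^{1/3} V^{2/3} + N^{-2/3} V^{5/3}.
\end{equation*}
Choosing $N = \max(1,\lceil V\rceil)$ gives $v \lesssim 1$; for $V \leq 1$ we take $N=1$ and get $\lesssim V^{2/3}$, while for $V \geq 1$ we take $N \sim V$, which yields $\sim V$ for each of the two terms. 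Combining the two cases gives $E_0(V) \lesssim V^{2/3} + V$.

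The only real subtlety is the first step: one must be slightly careful to formulate the sub-additivity using near-minimizers rather than actual minimizers, since existence of minimizers for $\mf{E}_Z$ is not assumed (indeed, it is the object of the main theorem), but this is routine. The scaling balance in the second step, which forces a splitting into order-one sized pieces once $V \gg 1$, is really the only place where any thought is needed.
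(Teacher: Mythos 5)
Your proof is correct and follows essentially the same route as the paper: sub-additivity via widely separated near-minimizers (with the cross Coulomb and attraction terms controlled by $O(\abs{a}^{-1})$), and then bounding $E_0$ by a single ball for $V\lesssim1$ and by splitting into $\sim V$ unit-volume balls for $V\gtrsim1$, which is exactly the paper's use of a ball competitor for small $V$ followed by iterated sub-additivity for large $V$.
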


\begin{proof}
  The proof of \eqref{eq:linearbound} follows a similar argument of
  \cite{LuOtto:14}*{Lemma 3(i)} in our previous work. Assuming
  $\Omega_1$ and $\Omega_2$ are bounded sets that approximately
  minimize $E_Z(V')$ and $E_0(V - V')$ respectively, the inequality is
  obtained by considering a test set for $E_Z(V)$ that consists of
  $\Omega_1 \cup (\Omega_2 + d)$ with a large shift vector $d$. We
  refer the readers to the proof of \cite{LuOtto:14}*{Lemma 3(i)} for
  details.

  For $V \ll 1$, the estimate \eqref{eq:E0bound} follows by taking a
  ball with volume $V$ as the test set. For $V \gtrsim 1$, the
  estimate follows from the sub-additivity by taking $Z = 0$ in
  \eqref{eq:linearbound}.
\end{proof}

Using Lemma~\ref{lem:Eupperbound}, we now prove
\begin{lemma}\label{lem:EupperboundZ}
  We have 
  \begin{equation}
    E_Z(V) - E_{\es}(Z) \lesssim V - Z.
  \end{equation}
\end{lemma}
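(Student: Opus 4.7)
The plan is to combine the subadditivity bound from Lemma~\ref{lem:Eupperbound} with an explicit ball competitor for $E_Z(Z)$, and then absorb the lower-order terms using the standing assumptions $Z \gg 1$ and $V - Z \gg Z^{2/3}$.

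First, I would apply Lemma~\ref{lem:Eupperbound} at the split $V' = Z$ to obtain
\begin{equation*}
  E_Z(V) \leq E_Z(Z) + E_0(V - Z).
\end{equation*}
Next, since the ball $B_{R_Z}$ has volume exactly $Z$, it is admissible in the variational problem defining $E_Z(Z)$, so
\begin{equation*}
  E_Z(Z) \leq \mf{E}_Z(B_{R_Z}) = \abs{\partial B_{R_Z}} + E_{\es}(Z) \lesssim Z^{2/3} + E_{\es}(Z),
\end{equation*}
where the last inequality uses $\abs{\partial B_{R_Z}} = 4\pi R_Z^2 \sim Z^{2/3}$ by definition of $R_Z$. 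Combining the two displays gives
\begin{equation*}
  E_Z(V) - E_{\es}(Z) \lesssim Z^{2/3} + E_0(V-Z).
\end{equation*}

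Now I would invoke the estimate \eqref{eq:E0bound} from Lemma~\ref{lem:Eupperbound} to bound $E_0(V-Z) \lesssim (V-Z)^{2/3} + (V-Z)$. It remains to show each of the three terms $Z^{2/3}$, $(V-Z)^{2/3}$, $(V-Z)$ is $\lesssim V - Z$. The last is trivial. For the first, the standing assumption $V - Z \gg Z^{2/3}$ yields $Z^{2/3} \lesssim V - Z$ directly. For the middle term, the same standing assumption together with $Z \gg 1$ gives $V - Z \gg Z^{2/3} \gg 1$, hence $(V-Z)^{2/3} \leq V - Z$. Collecting these bounds produces the claimed estimate.

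This is essentially a bookkeeping argument; the only ``obstacle'' is verifying that the standing hypotheses are strong enough to absorb the $Z^{2/3}$ term, which they precisely are by design. No new tool beyond Lemma~\ref{lem:Eupperbound} and the ball-competitor upper bound for $E_Z(Z)$ is needed.
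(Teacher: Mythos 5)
Your proof is correct and follows essentially the same approach as the paper: split at $V' = Z$ via Lemma~\ref{lem:Eupperbound}, use the ball $B_{R_Z}$ as a competitor for $E_Z(Z)$, and absorb the $Z^{2/3}$ and $(V-Z)^{2/3}$ terms via the standing assumption $V - Z \gg Z^{2/3} \gtrsim 1$. The only difference is the order in which the lower-order terms are collected and absorbed, which is purely presentational.
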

\begin{proof} 
  
  By taking $V' = Z$ in \eqref{eq:linearbound}, we get 
  \begin{equation*}
    E_Z(V) - E_Z(Z) \leq E_0(V - Z) \lesssim V - Z, 
  \end{equation*}
  where the last inequality follows from \eqref{eq:E0bound} and the
  assumption that $V - Z \gg Z^{2/3} \gtrsim 1$. Observing that 
  \begin{equation*}
    E_Z(Z) \leq \mf{E}_Z(B_{R_Z}) = E_{\es}(Z) + \abs{ \partial B_{R_Z} }, 
  \end{equation*}
  we arrive at the conclusion since $\abs{\partial B_{R_Z}} \sim
  Z^{2/3} \ll V - Z$.
\end{proof}

With the above a priori bound for the energy, we are now
ready to state and prove the main estimate of this subsection. The
estimate states that the minimizer is ``close'' to the ball $B_{R_Z}$
near the origin. More precisely, we have 
\begin{prop}\label{prop:closeball}
  Let $\Omega$ be a minimizer of \eqref{eq:NLIPvar}. Then
  \begin{equation}\label{eq:BROmega}
    \abs{B_{R_Z} - \Omega} \lesssim 
     (V - Z)^{1/2} Z^{1/3}, 
  \end{equation}
  and also
  \begin{equation}\label{eq:B2ROmega}
    \abs{\Omega \cap B_{2R_Z}} - Z \lesssim (V - Z)^{1/2} Z^{1/3}. 
  \end{equation}
\end{prop}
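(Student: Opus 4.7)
My plan is to first derive a tight control on $\chi_\Omega - \chi_Z$ in the Coulomb norm from the a priori energy estimate of Lemma~\ref{lem:EupperboundZ}, and then to convert this control into the volume estimates \eqref{eq:BROmega} and \eqref{eq:B2ROmega} by testing against two explicit Lipschitz cutoffs through the Coulomb--Sobolev duality \eqref{eq:coulombduality}.

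Since $\Omega$ is a minimizer, Lemma~\ref{lem:EupperboundZ} gives $\mf{E}_Z(\Omega) - E_{\es}(Z) \lesssim V - Z$, and discarding the nonnegative perimeter leaves $\mf{E}_{\es}(\Omega) - E_{\es}(Z) \lesssim V - Z$. Using the identity from \eqref{eq:escalc} in the proof of Proposition~\ref{prop:Ees} together with the sign observation there (both $\chi_\Omega - \chi_Z$ and $u$ are nonpositive on $B_{R_Z}$, while $u\equiv 0$ outside), the remainder $\int (\chi_\Omega - \chi_Z)\,u$ is nonnegative and may be dropped, producing
\[
  \tfrac12\,\norm{\chi_\Omega - \chi_Z}_C^2 \;\lesssim\; V - Z.
\]

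For \eqref{eq:BROmega}, I take a Lipschitz cutoff $\psi_1\equiv 1$ on $B_{R_Z-1}$, supported in $B_{R_Z}$, with $\abs{\nabla\psi_1}\lesssim 1$, so that $\norm{\nabla\psi_1}_2^2 \lesssim \abs{A_{R_Z-1,R_Z}} \lesssim R_Z^2$. Since $\psi_1\leq 1$ on $B_{R_Z}$ and vanishes outside,
\[
  \int_{B_{R_Z}\setminus\Omega}\psi_1 \;=\; -\int(\chi_\Omega - \chi_Z)\psi_1 \;\lesssim\; (V-Z)^{1/2}\,\norm{\nabla\psi_1}_2 \;\lesssim\; (V-Z)^{1/2} Z^{1/3}
\]
by \eqref{eq:coulombduality}. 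Since $\psi_1\equiv 1$ on $B_{R_Z-1}$, this controls $\abs{B_{R_Z-1}\setminus\Omega}$, and the leftover annular slice $\abs{A_{R_Z-1,R_Z}}\lesssim R_Z^2 \sim Z^{2/3}$ is absorbed because the standing assumption $V-Z\gg Z^{2/3}$ forces $Z^{2/3}\lesssim (V-Z)^{1/2} Z^{1/3}$. For \eqref{eq:B2ROmega}, an analogous cutoff $\psi_2\equiv 1$ on $B_{2R_Z}$, supported in $B_{2R_Z+1}$ with $\abs{\nabla\psi_2}\lesssim 1$, gives $\norm{\nabla\psi_2}_2\lesssim R_Z$; since $\psi_2\equiv 1$ on $B_{R_Z}$ and $\psi_2\geq \chi_{B_{2R_Z}}$,
\[
  \abs{\Omega\cap B_{2R_Z}} - Z \;\leq\; \int(\chi_\Omega - \chi_Z)\psi_2 \;\lesssim\; (V-Z)^{1/2} Z^{1/3}.
\]

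The main obstacle is matching the target length scale $Z^{1/3}\sim R_Z$: the Coulomb norm bound by itself would produce only cruder volume estimates, and the sharp factor $Z^{1/3}$ emerges precisely by engineering $\norm{\nabla\psi}_2\lesssim R_Z$. This forces the cutoffs to have transitions concentrated in a thin annulus of width $\sim 1$ at distance $\sim R_Z$ from the origin; the residual annular errors are then absorbed using the standing hypothesis $V-Z\gg Z^{2/3}$.
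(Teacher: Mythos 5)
Your proposal is correct and follows essentially the same route as the paper: bound $\norm{\chi_\Omega - \chi_Z}_C^2 \lesssim V-Z$ by combining Lemma~\ref{lem:EupperboundZ}, the positivity of the perimeter, and the identity~\eqref{eq:escalc} together with the sign of the cross term; then convert to volume via Coulomb--Sobolev duality~\eqref{eq:coulombduality} with Lipschitz cutoffs whose gradients live in a unit-thick annulus of radius $\sim R_Z$, so that $\norm{\nabla\psi}_2 \lesssim R_Z \sim Z^{1/3}$; and absorb the leftover $O(Z^{2/3})$ annular slice using $V-Z \gg Z^{2/3}$. The one place you diverge is the second cutoff: you take $\psi_2$ equal to $1$ on $B_{2R_Z}$ and supported in $B_{2R_Z+1}$ (an ``outer'' cutoff with $\psi_2 \ge \chi_{B_{2R_Z}}$), while the paper uses a cutoff supported in $B_{2R_Z}$ and equal to $1$ only on $B_{2R_Z-1}$. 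Your choice makes the step $\abs{\Omega \cap B_{2R_Z}} \le \int \chi_\Omega \psi_2$ immediate and direction-correct, whereas with the paper's inner cutoff one actually needs to first bound $\abs{\Omega \cap B_{2R_Z-1}}$ and then absorb the extra annulus $A_{2R_Z-1,2R_Z}$ of volume $\lesssim Z^{2/3}$, which the paper elides. So yours is a small but genuine simplification of that line, with the same length scales and the same final bound.
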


Note that by definition $\abs{B_{R_Z}} = Z$, so if $V$ is not too
large, \eqref{eq:BROmega} states that the minimizing set $\Omega$
almost fills the ball $B_{R_Z}$ (the difference is of lower order) and
\eqref{eq:B2ROmega} states that the minimizing set $\Omega$ has a
small volume in the annulus $A_{R_Z, 2R_Z}$. 

\begin{proof}
  Without loss of generality, we assume that $Z > \abs{B_1}$ such that
  $R_Z > 1$.  Let $\psi$ be the function
  \begin{equation*}
    \psi(x) = 
    \begin{cases}
      1, & \abs{x} \leq R_Z - 1; \\
      R_z - \abs{x}, & R_Z - 1 \leq \abs{x} \leq R_Z; \\
      0, & \abs{x} \geq R_Z.
    \end{cases}
  \end{equation*}
  We estimate
  \begin{equation}\label{eq:missvol}
    \begin{aligned}
      \abs{B_{R_Z} - \Omega} & = \int_{B_{R_Z}}
      (\chi_Z - \chi_{\Omega}) \\
      & = \int_{B_{R_Z}} (\chi_Z - \chi_{\Omega}) \psi +
      \int_{B_{R_Z}} (\chi_Z - \chi_{\Omega}) (1 - \psi) \\
      & \stackrel{\eqref{eq:coulombduality}}{\leq} \frac{1}{\sqrt{4\pi}}
      \norm{\chi_Z - \chi_{\Omega}}_C \Bigl( \int\abs{\nabla \psi}^2
      \Bigr)^{1/2} + \int_{B_{R_Z} - B_{R_Z - 1}} 1 \\
      & \lesssim \norm{\chi_Z - \chi_{\Omega}}_C R_Z + R_Z^2.
    \end{aligned}
  \end{equation}
  To control $\norm{\chi_Z - \chi_{\Omega}}_C$, recall the calculation
  \eqref{eq:escalc}
  \begin{equation*}
    \begin{aligned}
      \mf{E}_{\es}(\Omega) - E_{\es}(Z)  = \mf{E}_{\es}(\Omega) -
      \mf{E}_{\es}(B_{R_Z}) 
       = \frac{1}{2} \norm{ \chi_{\Omega} - \chi_Z}_C^2  + \int
      (\chi_{\Omega} - \chi_Z) u.
    \end{aligned}
  \end{equation*}
  As shown in the proof of Proposition~\ref{prop:Ees}, the second term
  on the right hand side is non-negative, and hence
  \begin{equation}\label{eq:Coulombbound}
    \norm{\chi_{\Omega} - \chi_Z}_C^2 \leq 2 (\mf{E}_{\es}(\Omega) - 
    E_{\es}(Z)) \leq 2(\mf{E}_Z(\Omega) - E_{\es}(Z)) \lesssim V - Z,
  \end{equation}
  where we have used Lemma~\ref{lem:EupperboundZ} in the last
  inequality.

  Using the bound \eqref{eq:Coulombbound} in \eqref{eq:missvol}, we
  arrive at (recall that $V-Z \gg Z^{2/3}$)
  \begin{equation*}
    \abs{B_{R_Z} - \Omega} \lesssim
    (V - Z)^{1/2} Z^{1/3} + Z^{2/3} \lesssim (V - Z)^{1/2} Z^{1/3}.
  \end{equation*}

  The proof of \eqref{eq:B2ROmega} is similar. We replace $\psi$ by 
  \begin{equation*}
    \varphi(x) = 
    \begin{cases}
      1, & \abs{x} \leq 2 R_Z-1; \\
      2 R_Z - \abs{x}, & 2 R_Z-1 \leq \abs{x} \leq 2 R_Z; \\
      0, & \abs{x} \geq 2 R_Z.
    \end{cases}
  \end{equation*}
  Then, 
  \begin{equation*}
    \begin{aligned}
      \abs{\Omega \cap B_{2R_Z}} & \leq \int \chi_{\Omega}
      \varphi \\
      & = \int (\chi_{\Omega} - \chi_Z) \varphi + \int \chi_Z \varphi \\
      & \stackrel{\eqref{eq:coulombduality}}{\leq} \frac{1}{\sqrt{4\pi}} \norm{\chi_Z - \chi_{\Omega}}_C \Bigl( \int\abs{\nabla
        \varphi}^2 \Bigr)^{1/2} + Z \\
      & \leq C \norm{\chi_Z - \chi_{\Omega}}_C R_Z + Z.
    \end{aligned}
  \end{equation*}
  The estimate \eqref{eq:B2ROmega} follows. 
\end{proof}

\subsection{Charge screening}

The central idea of our proof is to screen the background nucleus
charge, so that we may use the argument for the case without the
nucleus from \cite{LuOtto:14}. We set the effective nucleus charge as
\begin{equation*}
  Z_{\eff} =  (V-Z)^{1/2} Z^{1/3}. 
\end{equation*}
Note that if $V - Z \sim Z^{2/3}$, we have $Z_{\eff} \sim Z^{2/3}$.  By
\eqref{eq:BROmega}, we have
\begin{equation}\label{eq:surpluscharge}
  Z - \abs{\Omega \cap B_{R_Z}} = \abs{B_{R_Z} - \Omega}
  \leq c_0 Z_{\eff},
\end{equation}
where $c_0$ is a universal constant. Note that by Newton's theorem,
outside the radius $R_Z$, the nucleus charge is screened by the amount
of positive charge in $B_{R_Z}$, which is  given by $\abs{\Omega \cap
  B_{R_Z}}$. 

We will establish the following estimate of the volume of a minimizer
$\Omega$, which justifies the terminology of effective nucleus charge.
\begin{prop}\label{prop:refined}
  Let $\Omega$ be a minimizer of \eqref{eq:NLIPvar} of volume $V$, then 
  \begin{equation}\label{eq:refined}
    V - Z \lesssim Z_{\eff}.
  \end{equation}
\end{prop}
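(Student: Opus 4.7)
The strategy is to reduce to the $Z=0$ nonexistence argument of \cite{LuOtto:14} via electrostatic screening, using the near-spherical symmetry of the inner mass established in Proposition~\ref{prop:closeball}. I will work by contradiction, assuming $V - Z \gg Z_{\eff}$ and seeking to produce an admissible competitor with strictly lower energy than the alleged minimizer.

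First I would establish that outside $B_{R_Z}$ the nucleus is screened by the inner mass up to an effective residual of order $Z_{\eff}$. Since Proposition~\ref{prop:closeball} gives $\abs{B_{R_Z} \setminus \Omega} \lesssim Z_{\eff}$ and \eqref{eq:B2ROmega} gives $\abs{\Omega \cap B_{2R_Z}} - Z \lesssim Z_{\eff}$, Newton's theorem yields, for $\abs{y} \geq 2R_Z$,
\begin{equation*}
\Abs{\int_{\Omega \cap B_{R_Z}} \frac{1}{\abs{y-z}} \ud z - \frac{Z}{\abs{y}}} \lesssim \frac{Z_{\eff}}{\abs{y}}.
\end{equation*}
I would then invoke Lemma~\ref{lem:density} at points $x \in \Omega \setminus B_{2R_Z}$: the screening hypothesis $\phi_x > 0$ on $B_1(x)$ reduces to showing that the potential generated by $\Omega \setminus (B_{R_Z} \cup B_1(x))$ dominates $Z_{\eff}/\abs{y}$ at $y \in B_1(x)$. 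Since $\abs{\Omega \setminus B_{2R_Z}} \gtrsim V - Z \gg Z_{\eff}$, a Newton-type lower bound using the outer mass inside $B_{\abs{x}}$ provides this dominance at bulk points $x$ with enough surrounding outer mass. The density lemma then gives $\abs{\Omega \cap B_1(x)} \geq \delta$ at each such $x$, and a packing argument confines the outer portion of $\Omega$ to a ball of radius $\lesssim V^{1/3}$.

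With density in the outer region established, I would construct a volume-preserving competitor as follows: pick a bulk point $x_0$ of $\Omega$ at an outermost admissible radius, remove $\Omega \cap B_1(x_0)$ (which has mass $\geq \delta$), and redistribute the removed volume into small balls translated off to infinity. By Lemma~\ref{lem:Eupperbound} the added perimeter and Coulomb self-energy of these balls contribute at most $O(1)$. Modulo these $O(1)$ costs, the energy change equals the nuclear attraction lost at the removed piece minus the Coulomb interaction of the removed piece with the rest of $\Omega$. Combining the screening bound with a Newton-type lower bound on the outer contribution of $\Omega \cap B_{\abs{x_0}}$, this interaction exceeds the attraction by $\gtrsim \delta (V - Z)/\abs{x_0}$. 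Since the outer portion of $\Omega$ lies in a ball of radius $\lesssim V^{1/3}$, one has $\abs{x_0} \lesssim V^{1/3} \ll V - Z$ (using $V - Z \gg Z^{2/3}$), so the competitor strictly decreases the energy, contradicting minimality and forcing $V - Z \lesssim Z_{\eff}$.

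The main obstacle I anticipate is verifying the screening hypothesis $\phi_x > 0$ across a sufficiently large subset of the outer region: the residual $Z_{\eff}/\abs{y}$ must be dominated by the possibly sparse outer mass near $x$, which may necessitate a bootstrap that first establishes density on a far outer shell (where the nuclear term is manifestly negligible) and then propagates density inward by induction on the radius. A secondary delicate point is sharpening the Newton-type lower bound on the outer contribution to the Coulomb potential enough to realize the $\gtrsim (V - Z)/\abs{x_0}$ margin in the energy balance, rather than only the weaker $(V - Z)/(2\abs{x_0})$ produced by a crude averaging.
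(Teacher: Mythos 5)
Your high-level intuition — screen the nucleus with the near-spherical inner mass, then fall back on a $Z=0$–type nonexistence argument applied to the outer excess — is indeed what the paper does. But the crucial technical step is wrong: the claim that ``a packing argument confines the outer portion of $\Omega$ to a ball of radius $\lesssim V^{1/3}$'' does not follow from the density lemma. Density $\abs{\Omega\cap B_1(x)}\geq\delta$ at every measure-theoretic point $x$ only gives a packing bound on the \emph{number} of disjoint unit balls, not a diameter bound: a thin connected tube of unit thickness can carry volume $V-Z$ while extending to length of order $V-Z$, and no amount of packing rules this out. Without a diameter bound, your competitor gains only $\sim\delta(V-Z)/\abs{x_0}$ against an $O(1)$ cost, and if $\abs{x_0}$ is allowed to be comparable to $V-Z$ the gain degenerates to $O(1)$ and the contradiction evaporates. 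This is precisely why the paper does not go the ``cut at the outermost point'' route.

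The paper's proof replaces the nonexistent diameter bound with a connectedness mechanism. It first screens the nucleus not once but twice — first to get a radius $R$ where the density lemma kicks in (Lemma~\ref{lem:screenC}), and then, using $N\sim Z_{\eff}$ auxiliary balls $B_1(x_k)$ carrying at least $13c_0Z_{\eff}$ of charge, it pushes the screening inward to radius $\frac{2}{3}L$ where $L=\max_k\abs{x_k}$ (Lemma~\ref{lem:screenBall}). A local connectedness argument (Lemma~\ref{lem:connect}) then shows that any point of $\wh\Omega=\Omega-(B_{\frac{2}{3}L+H}\cup\bigcup_k B_H(x_k))$ carries a chain of $\sim H$ concentric density-$\delta$ shells, so the Coulomb interaction of $\wh\Omega$ with $\Omega-(B_{\frac{2}{3}L}\cup\bigcup_k B_1(x_k))$ is $\gtrsim(V-Z)\ln H$ (Lemma~\ref{lem:CoulombEnergy}). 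Comparing against the upper bound $E_Z(V)-E_Z(\wb\Omega)\lesssim V-Z$ and sending $H\to\infty$ gives the contradiction — no control of $\abs{x_0}$ or of the diameter of $\Omega$ is ever needed. A secondary point: you identify the ``factor of two'' in the crude lower bound for the outer Coulomb interaction as a delicate issue, but that is not the genuine obstruction; once Newton's theorem is applied to the near-spherical $\Omega\cap B_{R_Z}$ (which you do have from Proposition~\ref{prop:closeball}), the factor of two is harmless since $Z_{\eff}\ll V-Z$. The real gap is the unjustified diameter control.
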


Note that Theorem~\ref{thm:nonexist} is an easy corollary of
Proposition~\ref{prop:refined}: By \eqref{eq:refined} and the
definition of $Z_{\eff}$, we have
\begin{equation*}
  % V - Z \lesssim 1 \qquad \text{or} \qquad 
  V - Z \lesssim (V-Z)^{1/2} Z^{1/3}, 
\end{equation*}
thus indeed $V-Z \lesssim Z^{2/3}$.  
% and hence the conclusion of Theorem~\ref{thm:nonexist}.
It suffices to prove Proposition~\ref{prop:refined}. Note that we have
$V - Z \gg Z_{\eff}$ since it is assumed (for proof by contradiction)
that $V - Z \gg Z^{2/3}$.

\smallskip

We start the proof of Proposition~\ref{prop:refined} by showing that
the nucleus charge is completely screened far away from the
origin. However, Lemma~\ref{lem:screenC} below is qualitative in the
sense that it yields no control on the radius $R$ starting from which
we have complete screening. In view of Lemma~\ref{lem:density}, we
strengthen this complete screening in terms of the modified potential
$\phi_x$ that ignores the effect of the nearby charges: For
$y \in B_1(x)$
\begin{equation*}
  \phi_x(y) = \int_{\Omega - B_1(x)} \frac{1}{\abs{y - z}} \ud z - \frac{Z}{\abs{y}}. 
\end{equation*}
Hence $\phi_x$ is the electric potential generated by the nucleus and
the set $\Omega$ outside the ball $B_1(x)$.

\begin{lemma}\label{lem:screenC}
  There exists a radius $R \geq 2 R_Z$ such that
  \begin{equation*}
     \phi_x \vert_{B_1(x)}  > 0 \ \text{ if }\ \abs{x} \geq R, \quad \text{and} \quad 
     \abs{\Omega \cap B_R} - Z \lesssim Z_{\eff}.
  \end{equation*}
\end{lemma}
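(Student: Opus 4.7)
I plan to construct the radius $R$ in two stages. First I would secure the existence of \emph{some} radius where the screening condition $\phi_x|_{B_1(x)} > 0$ holds for all $\abs{x} \geq r$, then take the infimum of all such radii and verify the accompanying charge bound there.

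For the first stage, I would use only the finiteness of $\abs{\Omega} = V$. Given $\eta > 0$, pick $R_\eta$ with $\abs{\Omega - B_{R_\eta}} < \eta$. Choosing $\eta < (V-Z)/2$ and $\abs{x}$ sufficiently large that $B_1(x) \cap B_{R_\eta} = \emptyset$ and $\abs{y} \gtrsim ZR_\eta/(V-Z)$ for all $y \in B_1(x)$, the crude pointwise estimate
\[
  \int_{\Omega - B_1(x)} \frac{1}{\abs{y-z}} \ud z \ \geq\ \int_{\Omega \cap B_{R_\eta}} \frac{1}{\abs{y-z}} \ud z \ \geq\ \frac{\abs{\Omega \cap B_{R_\eta}}}{\abs{y} + R_\eta} \ \geq\ \frac{Z + (V-Z)/2}{\abs{y} + R_\eta}
\]
exceeds $Z/\abs{y}$ and yields $\phi_x(y) > 0$ on $B_1(x)$. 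Hence the set $\mathcal{S} := \{r \geq 2R_Z : \phi_x|_{B_1(x)} > 0 \text{ for all } \abs{x} \geq r\}$ is nonempty, and I would define $R := \inf \mathcal{S}$. By construction $R$ is finite and the screening condition holds at $R$ itself.

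The main obstacle is verifying the charge bound $\abs{\Omega \cap B_R} - Z \lesssim Z_{\eff}$ at this $R$. I would proceed by contradiction: if $\abs{\Omega \cap B_R} - Z \gg Z_{\eff}$, the excess charge in $B_R$ should strengthen the pointwise screening for $\abs{x}$ slightly below $R$, contradicting the definition of $R$ as an infimum. The quantitative input would be the Coulomb-norm closeness $\norm{\chi_\Omega - \chi_Z}_C^2 \lesssim V - Z$ from \eqref{eq:Coulombbound}, combined with Newton's theorem for $B_{R_Z}$ (which gives $\int_{B_{R_Z}} \abs{y-z}^{-1} \ud z = Z/\abs{y}$ for $\abs{y} \geq R_Z$, exactly cancelling the nucleus). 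Splitting $\Omega = (\Omega \cap B_{2R_Z}) \cup (\Omega - B_{2R_Z})$ and applying Proposition~\ref{prop:closeball} to the first piece isolates the hard question to controlling pointwise deviations of the minimizer's potential from $Z/\abs{y}$ in terms of integral data — in particular, converting $L^2$-type Coulomb control into pointwise bounds at the critical radius $R$. I expect this is where the minimizer's intrinsic regularity (volume density, finite perimeter) must enter to close the argument.
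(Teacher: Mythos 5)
Your first stage is fine: the finiteness of $\abs{\Omega}$ does yield some radius past which screening holds, and the crude estimate you wrote is correct. The genuine gap is the second stage, and the route you sketch for it does not close. You propose to convert the $L^2$-type Coulomb bound \eqref{eq:Coulombbound} into a pointwise potential bound near the critical radius, and you expect the minimizer's density/perimeter regularity to be the missing ingredient. Neither is the right tool here: a single Coulomb-norm bound does not control the potential pointwise at any particular radius, and the paper uses no regularity of $\Omega$ in this lemma at all — only the a priori electrostatic estimates \eqref{eq:BROmega}, \eqref{eq:B2ROmega} and Newton's theorem. Your contradiction ``if $\abs{\Omega\cap B_R}-Z\gg Z_{\eff}$ then screening holds slightly below $R$'' is also not a priori true as stated, because a large excess charge in $B_R$ could sit in an unfavorable place (e.g.\ near the origin, where it only cancels the nucleus) and help little at $\abs{y}\approx R$; what matters is the charge specifically in the annulus $A_{R_Z,R}$.

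The paper constructs $R$ in the opposite order: it fixes the charge content first and verifies screening second. It picks $\wt R$ so that $\abs{\Omega\cap A_{R_Z,\wt R}}=6c_0 Z_{\eff}$ (possible since $V-Z\gg Z_{\eff}$), sets $R=\max(\wt R,2R_Z)$, and deduces from \eqref{eq:B2ROmega} and \eqref{eq:surpluscharge} that $6c_0Z_{\eff}\le\abs{\Omega\cap A_{R_Z,R}}\lesssim Z_{\eff}$. The charge bound $\abs{\Omega\cap B_R}-Z\le\abs{\Omega\cap A_{R_Z,R}}\lesssim Z_{\eff}$ is then immediate. Screening for $\abs{x}\ge R$ is a pointwise estimate: Newton's theorem together with \eqref{eq:surpluscharge} shows that $\Omega\cap B_{R_Z}$ screens the nucleus up to a deficit of at most $c_0Z_{\eff}/(\abs{y}-R_Z)$ for $\abs{y}>R_Z$, while the calibrated annular charge contributes at least $(6c_0Z_{\eff}-\abs{B_1})/(\abs{y}+R)$; since $\abs{y}\ge R-1\ge 2R_Z-1$ and $R\ge 2R_Z$, one has $5(\abs{y}-R_Z)>\abs{y}+R$ so the second term dominates the deficit. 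The factor $6$ is chosen precisely to make this numerology work. (Your infimum-based $R$ could in fact be salvaged by running this same pointwise estimate at the failure points just below $R$, but that requires recognizing the estimate as the mechanism — not $L^2$-to-pointwise conversion or regularity — and one must also handle the endpoint, since $\inf\mathcal S$ need not lie in $\mathcal S$.)
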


% \begin{remark}
%   The reason we use $R+1$ above will become clear below in
%   Lemma~\ref{lem:screenBall}. Note that since $R$ could potentially be
%   very large, $\abs{\Omega \cap A_{R, R+1}}$ is not a priori bounded,
%   and hence, the estimate of $\abs{\Omega \cap B_{R+1}}$ cannot be
%   obtained by that of $\abs{\Omega \cap B_R}$.
% \end{remark}

\begin{proof}
  We take a radius $\wt{R}$ such that
  \begin{equation*}
    \abs{\Omega \cap A_{R_Z, \wt{R}}} = 6 c_0 Z_{\eff}, 
  \end{equation*}
  where $c_0$ is the universal constant in
  \eqref{eq:surpluscharge}. Note that if for all $\wt{R} < \infty$,
  $\abs{\Omega \cap A_{R_Z, \wt{R}}} < 6 c_0 Z_{\eff}$, we would have
  \begin{equation*}
    V - Z \leq \abs{\Omega - B_{R_Z}} \leq 6 c_0 Z_{\eff}, 
  \end{equation*}
  which contradicts with the assumption $V - Z \gg Z_{\eff}$.  Set $R =
  \max( \wt{R}, 2 R_Z)$; this choice of $R$ guarantees that $R \geq
  2R_Z$. By \eqref{eq:B2ROmega} in Proposition~\ref{prop:closeball}
  and the choice of $\wt{R}$, we have
  \begin{equation}\label{eq:OmegaRZR}
    6 c_0 Z_{\eff} \leq \abs{\Omega \cap A_{R_Z, R}} \lesssim Z_{\eff}.
  \end{equation}
  Together with \eqref{eq:BROmega}, this implies $\abs{\Omega \cap
    B_{R}} - Z = \abs{B_{R_Z} - \Omega} + \abs{\Omega \cap A_{R_Z, R}} \lesssim Z_{\eff}$.  Thus, it remains to verify that
  $\phi_x \vert_{B_1(x)} > 0$ if $\abs{x} > R$.  Let us start with an
  elementary estimate for $y \not\in B_{R_Z}$
  \begin{multline}
    \label{eq:screenrz}
    \int_{\Omega \cap B_{R_Z}} \frac{1}{\abs{y-z}} \ud z -
    \frac{Z}{\abs{y}}  = \int_{\Omega \cap B_{R_Z}} \frac{1}{\abs{y-z}} \ud z - \int_{B_{R_Z}} \frac{1}{\abs{y - z}} \ud z \\
    = - \int_{B_{R_Z} - \Omega} \frac{1}{\abs{y-z}} \ud z
    \stackrel{\eqref{eq:surpluscharge}}{\geq} \frac{-c_0
      Z_{\eff}}{\abs{y}-R_Z}.
  \end{multline}
  Hence, we have the  estimate
  \begin{equation*}
    \begin{aligned}
      \phi_x(y) & = \int_{\Omega - B_1(x)} \frac{1}{\abs{y-z}} \ud z - \frac{Z}{\abs{y}} \\
      & \geq \biggl( \int_{\Omega \cap B_{R_Z}} \frac{1}{\abs{y-z}}
      \ud z - \frac{Z}{\abs{y}} \biggr) + \int_{\Omega \cap ( A_{R_Z,
          R} - B_1(x))} \frac{1}{\abs{y-z}} \ud z  \\
      & \stackrel{\eqref{eq:screenrz}}{\geq} -
      \frac{c_0Z_{\eff}}{\abs{y} - R_Z} + \frac{\abs{\Omega
          \cap A_{R_Z, R}} - \abs{\Omega \cap B_1(x)}}{\abs{y} + R } \\
      & \stackrel{\eqref{eq:OmegaRZR}}{\geq} -
      \frac{c_0Z_{\eff}}{\abs{y} - R_Z} + \frac{6 c_0 Z_{\eff} -
        \abs{B_1}} {\abs{y} + R}.
    \end{aligned}
  \end{equation*}
  Therefore, as $Z_{\eff} \gg \abs{B_1}$, we have $\phi_x
  \vert_{B_1(x)} > 0$ provided that
  \begin{equation*}
    5 (\abs{y} - R_Z) > \abs{y} + R 
  \end{equation*}
  for $\abs{y} \geq \abs{x} - \abs{y - x} \geq R - 1$, which can be
  easily checked.
\end{proof}

\bigskip

\begin{figure}[ht]
\includegraphics[width = 0.95\textwidth]{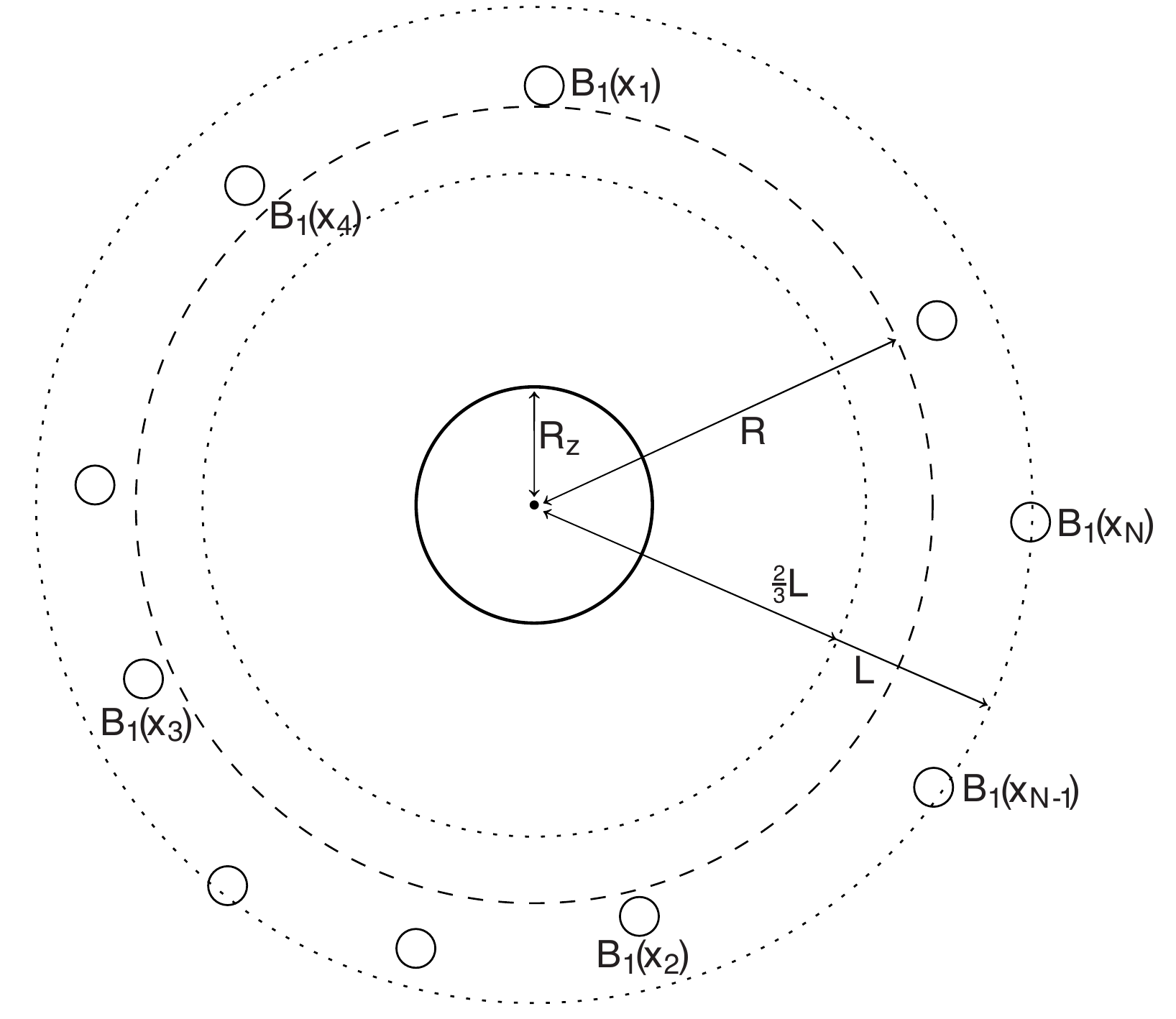}
\caption{Screening the nucleus charge by the set of balls
  $B_1(x_1), \ldots, B_1(x_N)$. \label{fig:screening}}
\end{figure}

% We assume in the sequel that $V - Z \gg Z_{\eff}$, otherwise, there is
% nothing to prove for Proposition~\ref{prop:refined}.
Because of $\abs{\Omega \cap B_R} - Z \lesssim Z_{\eff}$ and
$Z_{\eff} \ll V-Z$ by assumption, there is still substantial charge
outside of radius $R$. Of this charge we consider the amount of order
$Z_{\eff}$ closest to the origin, roughly speaking.  As $\phi_x > 0$
in $B_1(x)$ for $\abs{x} \geq R$ by Lemma~\ref{lem:screenC}, we can
now apply the Density Lemma~\ref{lem:density} to see that this excess charge is not too scattered. We will use this to further screen the
nucleus charge.  Let $N$ be the smallest positive integer which
satisfies $N \delta \geq 13 c_0 Z_{\eff}$, where $\delta$ is the
universal constant in \eqref{eq:density}. We take a sequence of points
$\{x_1, \cdots, x_N\}$ in $\Omega$ recursively defined as follows (see
Figure~\ref{fig:screening}):
\begin{equation*}
  x_1 = \argmin\, \bigl\{ \abs{x} \;\big\vert\; \abs{x} \geq R, 
  \ x \in \Omega \text{ in the sense of \eqref{eq:belongset}}\, \bigr\}, 
\end{equation*}
and for $k \leq N$, after $x_j$ is chosen for $j \leq k - 1$, we take
\begin{equation*}
  x_k = \argmin\, \bigl\{ \abs{x} \;\big\vert\;  \abs{x} \geq R,
  \, \min_{j \leq k-1} \abs{x - x_j} \geq 2, 
\ x \in \Omega \text{ in the sense of \eqref{eq:belongset}} \,\bigr\}.
\end{equation*}
Since $\abs{\Omega - B_R} = V - \abs{\Omega \cap B_R} \gg Z_{\eff} \gg 1$
from Lemma~\ref{lem:screenC}, such a sequence $\{x_k\}$ can be selected.
Also by Lemma~\ref{lem:screenC}, for each $x_k$, $\phi_{x_k} > 0$ in
$B_1(x_k)$.
Using Lemma~\ref{lem:density}, we obtain
\begin{equation}\label{eq:chargeinballs}
  \Bigl \lvert \Omega \cap \bigcup_{k=1}^N  B_1(x_k) \Bigr\rvert
  = \sum_{k=1}^N \bigl\lvert B_1(x_k) \cap \Omega \bigr\rvert 
  \geq N \delta \geq 13 c_0 Z_{\eff}. 
\end{equation}
Defining
\begin{equation*}
  L = \max_{1 \leq k \leq N} \abs{x_k} \geq R,
\end{equation*}
we now further refine our screening estimate by using the balls
centered at $\{x_k\}$.
\begin{lemma}\label{lem:screenBall}
  %Let $V - Z \gg Z_{\eff}$, 
  We have for all $x$ with $\abs{x} \geq \frac{2}{3}L$
  \begin{equation}\label{eq:screenBall}
    \int_{\Omega \cap (B_{\frac{2}{3}L} \cup \;\bigcup_k B_1(x_k))} \frac{1}{\abs{x - y}} \ud y
    - \frac{Z}{\abs{x}} > 0.
  \end{equation}
\end{lemma}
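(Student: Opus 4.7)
The plan is to split the integral in \eqref{eq:screenBall} into contributions from $\Omega \cap B_{R_Z}$ and from $\Omega \cap \bigcup_k B_1(x_k)$, then apply the Newton-type screening bound \eqref{eq:screenrz} to the first piece and the Density Lemma to the second. The nucleus is almost perfectly screened by the bulk charge inside $B_{R_Z}$ with a deficit of order $Z_{\eff}/\abs{x}$, while the extra charge of order $Z_{\eff}$ distributed among the balls $B_1(x_k)$ produces a positive term of the same order but with a much larger numerical prefactor, which overwhelms the deficit.

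First I would verify the geometric compatibility of this decomposition. Since $L \geq R \geq 2 R_Z$, we have $\frac{2}{3}L > R_Z$, hence $B_{R_Z} \subset B_{\frac{2}{3}L}$; and since $\abs{x_k} \geq R \geq 2 R_Z > R_Z + 1$ (using $R_Z \gg 1$), each ball $B_1(x_k)$ is disjoint from $B_{R_Z}$. By the selection procedure the balls $B_1(x_k)$ are also pairwise disjoint. Consequently, $\Omega \cap B_{R_Z}$ and $\Omega \cap \bigcup_k B_1(x_k)$ are disjoint subsets of $\Omega \cap \bigl(B_{\frac{2}{3}L} \cup \bigcup_k B_1(x_k)\bigr)$, and after discarding the non-negative contribution coming from $\Omega \cap A_{R_Z, \frac{2}{3}L}$, it is enough to show
\begin{equation*}
\int_{\Omega \cap B_{R_Z}} \frac{1}{\abs{x-y}} \ud y + \int_{\Omega \cap \bigcup_k B_1(x_k)} \frac{1}{\abs{x-y}} \ud y - \frac{Z}{\abs{x}} > 0.
\end{equation*}

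For the first integral, \eqref{eq:screenrz} applies since $\abs{x} \geq \frac{2}{3}L > R_Z$ and yields the lower bound $-c_0 Z_{\eff}/(\abs{x}-R_Z)$. For the second, Lemma~\ref{lem:screenC} gives $\phi_{x_k}\vert_{B_1(x_k)} > 0$ for every $k$ (as $\abs{x_k} \geq R$), so Lemma~\ref{lem:density} yields $\abs{\Omega \cap B_1(x_k)} \geq \delta$ for each $k$, and \eqref{eq:chargeinballs} produces a total charge of at least $13 c_0 Z_{\eff}$ in these balls. Since each $y \in B_1(x_k)$ satisfies $\abs{y} \leq L+1$, we have $\abs{x-y} \leq \abs{x}+L+1$, so this second integral is bounded below by $13 c_0 Z_{\eff}/(\abs{x}+L+1)$.

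Adding the two bounds, the remaining task is the purely arithmetic inequality $13(\abs{x}-R_Z) > \abs{x}+L+1$. Using $\abs{x} \geq \frac{2}{3}L$ and $R_Z \leq L/2$ (from $L \geq 2R_Z$), this reduces to $7L > 13 R_Z + 1$, which follows from $7L \geq 14 R_Z$ together with $R_Z \gg 1$. I expect the only subtle point of the proof to be the bookkeeping in the disjointness claims — specifically, ensuring $B_1(x_k) \cap B_{R_Z} = \emptyset$ so that the two charge contributions may be summed without double counting, and confirming that $\frac{2}{3}L$ really dominates $R_Z$ so that \eqref{eq:screenrz} is available at the relevant scale.
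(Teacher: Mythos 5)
Your proposal is correct and follows essentially the same route as the paper: bound the contribution of $\Omega \cap B_{R_Z}$ from below via the Newton-type estimate \eqref{eq:screenrz}, bound the contribution of the balls $\bigcup_k B_1(x_k)$ from below by $13 c_0 Z_{\eff}/(\abs{x}+L+1)$ using \eqref{eq:chargeinballs}, and close with the same arithmetic reduction to $7L > 13 R_Z + 1$. The only difference is presentational: you make the disjointness of $B_{R_Z}$ and the balls $B_1(x_k)$ (and the discarding of the nonnegative annulus contribution) explicit, which the paper leaves implicit.
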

The main point of Lemma~\ref{lem:screenBall} is that the additional
charges concentrated in the balls $\bigcup_k B_1(x_k)$ manage to
screen the nucleus charge already outside of the radius
$\frac{2}{3}L$, while we know that there are still charges at radius
$L$.  This will be used in Lemma~\ref{lem:whOmega}.
\begin{proof}
  For any $y \in \bigcup_k B_1(x_k)$, we have
  \begin{equation*}
    \abs{x - y} \leq \abs{x} + \abs{y} \leq \abs{x} + \max_k \abs{x_k} + 1
    \leq \abs{x} + L + 1,
  \end{equation*}
  and hence
  \begin{equation*}
    \int_{\Omega \cap \;\bigcup_k B_1(x_k)} \frac{1}{\abs{x - y}} \ud y
    \geq \frac{1}{\abs{x} + L + 1} \, \Bigl\lvert \Omega \cap \;\bigcup_k B_1(x_k)
    \Bigr\rvert  \stackrel{\eqref{eq:chargeinballs}}{\geq} \frac{13 c_0 Z_{\eff}}{\abs{x} + L + 1}.
  \end{equation*}
  Combining with the inequality \eqref{eq:screenrz} (note that
  $\tfrac{2}{3}L \geq \tfrac{4}{3} R_Z > R_Z$ and hence
  $B_{R_Z} \subset B_{\frac{2}{3}L}$), we just need to show that
  \begin{equation*}
    \frac{13}{\abs{x} + L + 1} > \frac{1}{\abs{x} - R_Z}, 
  \end{equation*}
  which is equivalent to $12 \abs{x} - L > 13 R_Z + 1$. The lemma
  follows by observing that $12 \abs{x} \geq 8 L$ and $7L \geq 7 R \geq
  14 R_Z > 13 R_Z + 1$ as $R_Z \sim Z^{1/3} \gg 1$. 
\end{proof}

The proof of Proposition~\ref{prop:refined} relies on the
connectedness of the minimizing set $\Omega$ where the nucleus charge
is screened. In connection with Lemma~\ref{lem:screenBall} this
implies that outside of the radius $\frac{2}{3}L$ and away from the
points $\{x_k\}_{k =1, \ldots, N}$, any point in $\Omega$ must be
connected to the rest of $\Omega$.
\begin{lemma}[Connectedness]\label{lem:connect}
  Define the set $\wh{\Omega}$ as
  \begin{equation*}
    \wh{\Omega} := 
    \Omega - ( B_{\tfrac{2}{3} L + H} \cup \bigcup_{k=1}^N B_H(x_k) ),
  \end{equation*}
  where $1 < H \ll Z^{1/3}$ is any universal constant (to be fixed
  later).  Then for any $x \in \wh{\Omega}$ and any radius $r$ that
  $0 < r < H - 1$, we have
  \begin{equation*}
    \bigl\lvert \partial B_r(x) \cap \Omega \bigr\rvert > 0.
  \end{equation*}
  % In particular, there exists $y \in \partial B_R(x) \cap \Omega$ such
  % that $\abs{B_r(y) \cap \Omega} > 0$ for all $r > 0$.
\end{lemma}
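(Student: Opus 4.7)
The plan is to argue by contradiction: suppose there exists $x \in \wh{\Omega}$ and $0 < r < H - 1$ with $\lvert \partial B_r(x) \cap \Omega \rvert = 0$, and construct a competitor with the same volume and strictly smaller energy. I split $\Omega$ along the sphere into the inside piece $\Omega_1 := \Omega \cap B_r(x)$ and the outside piece $\Omega_2 := \Omega - B_r(x)$. The vanishing of $\lvert \partial B_r(x) \cap \Omega \rvert$ means the cut is free of perimeter cost: the perimeters decompose as $\lvert \partial \Omega \rvert = \lvert \partial \Omega_1 \rvert + \lvert \partial \Omega_2 \rvert$. Since $x$ is a density point of $\Omega$, we also have $\lvert \Omega_1 \rvert > 0$.

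The key geometric observation is that the small ball $B_r(x)$ is disjoint from the screening set $\mc{S} := B_{\tfrac{2}{3}L} \cup \bigcup_{k=1}^N B_1(x_k)$. Indeed, using $x \in \wh{\Omega}$ and $r < H - 1$, for any $y \in B_r(x)$ we have
\[
\abs{y} \geq \abs{x} - r > \tfrac{2}{3}L + H - (H - 1) > \tfrac{2}{3}L
\quad\text{and}\quad
\abs{y - x_k} \geq \abs{x - x_k} - r > H - (H - 1) = 1.
\]
Consequently $\Omega \cap \mc{S} \subset \Omega_2$, and Lemma~\ref{lem:screenBall} applied pointwise yields, for every $y \in B_r(x)$,
\[
\int_{\Omega_2} \frac{1}{\abs{y - z}} \ud z - \frac{Z}{\abs{y}}
\;\geq\; \int_{\Omega \cap \mc{S}} \frac{1}{\abs{y - z}} \ud z - \frac{Z}{\abs{y}} > 0.
\]

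The competitor is then $\Omega_d := \Omega_2 \cup (\Omega_1 + d)$ for a shift vector $d$. Volume is automatically preserved, and for $\abs{d}$ sufficiently large the translated piece $\Omega_1 + d$ is disjoint from and separated from $\Omega_2$, so the perimeter equals $\lvert \partial \Omega_1 \rvert + \lvert \partial \Omega_2 \rvert = \lvert \partial \Omega \rvert$. The energy difference therefore comes only from the Coulomb and external-potential contributions and, as $\abs{d} \to \infty$, it converges to
\[
\int_{\Omega_1} \biggl( \frac{Z}{\abs{y}} - \int_{\Omega_2} \frac{1}{\abs{y - z}} \ud z \biggr) \ud y,
\]
which by the pointwise inequality above and $\abs{\Omega_1} > 0$ is strictly negative. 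Hence for $\abs{d}$ large enough $\mf{E}_Z(\Omega_d) < \mf{E}_Z(\Omega)$, contradicting minimality of $\Omega$.

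The only delicate point is the perimeter bookkeeping: one needs both the sphere-cut to be lossless (which is exactly the contradiction hypothesis) and additivity of perimeter across a far translation (standard once one knows that the minimizer may be taken to be essentially bounded). Modulo these standard facts about sets of finite perimeter, the argument reduces entirely to a pointwise application of the screening estimate of Lemma~\ref{lem:screenBall} inside $B_r(x)$.
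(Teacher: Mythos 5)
Your proof is correct and follows essentially the same route as the paper: you argue by contradiction, split $\Omega$ along the no-boundary sphere into $\Omega\cap B_r(x)$ and $\Omega- B_r(x)$, translate the inner piece far away, note that the perimeter is unchanged because the cut is lossless, and conclude strictly smaller energy from the pointwise screening estimate of Lemma~\ref{lem:screenBall} together with the geometric fact that $B_r(x)$ is disjoint from $B_{\frac{2}{3}L}\cup\bigcup_k B_1(x_k)$. The only presentational difference is that the paper writes down the comparison inequality $\mf{E}_Z(\Omega)\le\mf{E}_Z(\Omega-B_r(x))+\mf{E}_0(\Omega\cap B_r(x))$ directly (the limit of your shifted competitors as $\abs{d}\to\infty$) and then shows the reverse strict inequality, whereas you phrase it as finding a single large $\abs{d}$ with $\mf{E}_Z(\Omega_d)<\mf{E}_Z(\Omega)$; this is the same argument.
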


\begin{proof}
  Suppose for some $x \in \wh{\Omega}$ and some $r$
  \begin{equation}\label{eq:noboundary}
    \bigl\lvert \partial B_r(x) \cap \Omega \bigr\rvert = 0. 
  \end{equation}
  By comparing the energy of the minimizer $\Omega$ with the set that
  consists of $\Omega - B_r(x)$ and a translation of $\Omega
  \cap B_r(x)$ far away so that the two pieces are well separated, we
  obtain by minimality 
  \begin{equation}\label{eq:ec1}
    \mf{E}_Z(\Omega) \leq \mf{E}_Z\bigl(\Omega - B_r(x)\bigr)
    + \mf{E}_0\bigl( \Omega \cap B_r(x) \bigr). 
  \end{equation}
  
  We now show that \eqref{eq:ec1} cannot be true since the new
  configuration has the same interfacial energy thanks to
  \eqref{eq:noboundary} and strictly less electrostatic energy, as we
  shall presently argue.  Note that for $y \in B_r(x)$, thanks to
  $r < H - 1$
  \begin{equation*}
    \abs{y} \geq \abs{x} - \abs{y - x} \geq \tfrac{2}{3} L + H - r > \tfrac{2}{3} L. 
  \end{equation*}
  Therefore,  
  \begin{equation*}\label{eq:Brcontain}
    B_r(x) \cap  B_{\frac{2}{3}L} = \emptyset, 
  \end{equation*}
  and hence \eqref{eq:screenBall} in Lemma~\ref{lem:screenBall} holds
  on $B_r(x)$.%  Since $\frac{2}{3}L \geq \frac{2}{3} R > R_Z$, \eqref{eq:Brcontain}
  % further implies
  % \begin{equation*}
  %   B_r(x) \cap  B_{R_Z} = \emptyset.
  % \end{equation*}
  Also for each $k$, we have $\dist(x, B_1(x_k)) \geq H - 1 > r$, and thus
  \begin{equation*}
    B_r(x) \cap B_1(x_k) = \emptyset.
  \end{equation*}
  The last two statements combine to 
  \begin{equation}\label{eq:Brcontain2}
    \Omega - B_r(x) \supset \Omega \cap \bigl( B_{\frac{2}{3}L} \cup 
    \bigcup_{k=1}^N B_1(x_k)\bigr).  
  \end{equation}
  Finally, we get
  \begin{multline*}
    \mf{E}_Z(\Omega) - \mf{E}_Z\bigl(\Omega - B_r(x)\bigr) -
    \mf{E}_0\bigl( \Omega \cap B_r(x) \bigr) = \int_{\Omega \cap
      B_r(x)} \int_{\Omega - B_r(x)} \frac{1}{\abs{y - y'}}
    \ud y' \ud y
    - \int_{\Omega \cap B_r(x)} \frac{Z}{\abs{y}} \ud y \\
    \stackrel{\eqref{eq:Brcontain2}}{\geq} \int_{\Omega\cap B_r(x)} 
    \left( \int_{\Omega \cap (B_{\frac{2}{3}L} \cup \;\bigcup_{k} B_1(x_k))}
      \frac{1}{\abs{y-y'}} \ud y' - \frac{Z}{\abs{y}} \right) \ud y \stackrel{\eqref{eq:screenBall}}{>} 0.
  \end{multline*}
  The contradiction with \eqref{eq:ec1} concludes the proof. 
\end{proof}

Lemma~\ref{lem:whOmega} now shows that because there is a point in $\Omega$ at distance $L$ (namely $x_N$) and in view of the connectedness from Lemma~\ref{lem:connect}, there is a substantial amount of excess volume outside of $\frac{2}{3} L$ and away from the $\{x_k\}_{k = 1, \ldots, N}$. 
\begin{lemma}\label{lem:whOmega} We have for 
  $\wh{\Omega}$ defined in Lemma~\ref{lem:connect}
  \begin{equation*}
    \bigl\lvert \wh{\Omega} \bigr\rvert 
    \gtrsim V-Z. 
  \end{equation*}
\end{lemma}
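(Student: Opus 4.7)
The plan is to show that $\abs{\Omega \cap B_{2L/3+H}}$ and $\bigl\lvert \bigcup_k B_H(x_k) \bigr\rvert$ together occupy at most $Z + O(Z_{\eff})$ of the total volume $V$, leaving $\abs{\wh{\Omega}} \geq V - Z - O(Z_{\eff}) \gtrsim V - Z$ by the standing assumption $V - Z \gg Z_{\eff}$.

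The key step is to bound $\abs{\Omega \cap B_L}$ from above. We already have $\abs{\Omega \cap B_R} \leq Z + O(Z_{\eff})$ from Lemma~\ref{lem:screenC}, so it suffices to control the annular contribution $\abs{\Omega \cap A_{R, L}}$. Here I would exploit the greedy minimality of $x_N$: for any $y$ which lies in $\Omega$ in the measure-theoretic sense \eqref{eq:belongset} and satisfies $R \leq \abs{y} < L$, the inequality $\min_{j \leq N-1} \abs{y - x_j} < 2$ must hold, for otherwise $y$ would be a valid candidate with $\abs{y} < \abs{x_N}$, contradicting the definition of $x_N$. Since almost every point of $\Omega$ is a Lebesgue density point and hence satisfies \eqref{eq:belongset}, this yields the inclusion, modulo a Lebesgue-null set, $\Omega \cap A_{R, L} \subset \bigcup_{j = 1}^{N-1} B_2(x_j)$, and therefore $\abs{\Omega \cap A_{R, L}} \leq (N-1) \abs{B_2} \lesssim N \lesssim Z_{\eff}$. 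Combined with the bound on $B_R$, this gives $\abs{\Omega \cap B_L} \leq Z + O(Z_{\eff})$.

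To finish, I would note that $L \geq R \geq 2R_Z \sim Z^{1/3} \gg H$, which ensures $B_{2L/3 + H} \subset B_L$, so $\abs{\Omega \cap B_{2L/3 + H}} \leq \abs{\Omega \cap B_L} \leq Z + O(Z_{\eff})$. Moreover, $\bigl\lvert \bigcup_k B_H(x_k) \bigr\rvert \leq N \abs{B_H} \lesssim H^3 Z_{\eff}$, which is $O(Z_{\eff})$ since $H$ is a universal constant. Writing $\Omega$ as the disjoint union of $\wh{\Omega}$ and $\Omega \cap (B_{2L/3+H} \cup \bigcup_k B_H(x_k))$ and applying subadditivity then yields $\abs{\wh{\Omega}} \geq V - \abs{\Omega \cap B_{2L/3+H}} - \bigl\lvert \bigcup_k B_H(x_k) \bigr\rvert \geq V - Z - O(Z_{\eff})$, and the desired estimate $\abs{\wh{\Omega}} \gtrsim V - Z$ follows from $V - Z \gg Z_{\eff}$.

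The main obstacle I anticipate is the clean passage from the covering statement about measure-theoretic points to the volume bound, but this is routine via the density characterization of Lebesgue points. I also note that although the paragraph preceding Lemma~\ref{lem:whOmega} suggests the connectedness from Lemma~\ref{lem:connect} plays a role, the direct covering argument sketched above seems to suffice without invoking it.
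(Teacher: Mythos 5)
Your proof is correct and takes essentially the same route as the paper: both cover the annulus between $R$ and (roughly) $L$ by the balls $B_2(x_k)$ using the greedy minimality of the $x_k$, combine with the bound $\abs{\Omega\cap B_R}-Z\lesssim Z_{\eff}$ from Lemma~\ref{lem:screenC}, and subtract the $\lesssim H^3 Z_{\eff}$ volume of the $B_H(x_k)$'s; your use of $A_{R,L}$ with $N-1$ balls instead of $A_{R,\frac{2}{3}L+H}$ with $N$ balls is cosmetic. Your parenthetical observation that Lemma~\ref{lem:connect} is not actually invoked in this proof is also correct --- the expository sentence preceding the lemma overstates its role here. One small point to make explicit: your contradiction with the choice of $x_N$ uses $L=\abs{x_N}$, which holds because the feasible sets in the greedy construction are nested so that $\abs{x_1}\leq\cdots\leq\abs{x_N}$; the paper avoids needing this by simply noting $\abs{x}<\abs{x_k}$ for whichever $k$ attains the maximum.
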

\begin{proof} By definition, 
  \begin{equation*}
      \bigl\lvert \wh{\Omega} \bigr\rvert % & = \abs{\Omega \backslash
        % (B_{L-1} \cup \cup_k B_H(x_k))} \\
      \geq \abs{\Omega} - \abs{\Omega \cap B_{\tfrac{2}{3} L  + H}} - \sum_{k=1}^N
      \abs{B_H(x_k)} 
      = V - \abs{\Omega \cap B_{\tfrac{2}{3} L  + H}}  - \sum_{k=1}^N \abs{B_H(x_k)}.
  \end{equation*}
  Since $N \leq 10 c Z_{\eff} / \delta
  + 1 \lesssim Z_{\eff}$, we have   
  \begin{equation*}
    \sum_{k=1}^N \abs{B_H(x_k)} \lesssim H^3 Z_{\eff}. 
  \end{equation*}
  Thus, as by assumption $ V - Z \gg Z_{\eff}$ and $H$ being a universal constant,
  the conclusion of the lemma follows from 
  \begin{equation*} 
    \abs{\Omega \cap B_{\tfrac{2}{3} L  + H}} - Z \lesssim Z_{\eff}. 
  \end{equation*}
  Since $\abs{\Omega \cap B_R} - Z \lesssim Z_{\eff}$ from Lemma~\ref{lem:screenC}, it suffices to consider the case $\tfrac{2}{3}L + H > R$ and to show that 
  \begin{equation}\label{eq:OmegaLH}
    \abs{\Omega \cap A_{R, \tfrac{2}{3} L + H}} \lesssim Z_{\eff}.
  \end{equation}
  
  We now claim that by the choice of $\{x_k\}$,
  \begin{equation}\label{eq:OmegaAnnulus}
    \Omega \cap A_{R, \tfrac{2}{3} L + H} \subset \bigcup_{k=1}^N B_2(x_k). 
  \end{equation}
  Indeed, if there exists $x \in A_{R, \tfrac{2}{3} L + H}$ such that $x \in
  \Omega$ in the sense of \eqref{eq:belongset} and $x \not \in
  \bigcup_k B_2(x_k)$, we have $\abs{x - x_k} \geq 2$ and also
  \begin{equation*}
    R \leq \abs{x} \leq \frac{2}{3} L + H < L = \max_{1\leq k \leq N} \abs{x_k}.
  \end{equation*}
  thanks to
  $\tfrac{1}{3} L \geq \tfrac{1}{3} R \gtrsim R_Z \sim Z^{1/3} \gg H$.
  Therefore $\abs{x} < \abs{x_k}$ for some $k$, which violates the
  definition of $x_k$.  \eqref{eq:OmegaLH} now follows from
  \eqref{eq:OmegaAnnulus}, since $N \lesssim Z_{\eff}$.
\end{proof}

We now use once more the connectedness to argue that the electrostatic
energy coming from the charge outside of $\frac{2}{3}L$ and away from
the $\{x_k\}$'s is substantial.
\begin{lemma}\label{lem:CoulombEnergy}
  %Let $\Omega$ be a minimizer of prescribed volume $V$.
  %such that $V - Z \gg Z_{\eff}$. 
  We have for the Coulomb energy of the set $\wh{\Omega}$ defined in
  Lemma~\ref{lem:connect}
  \begin{equation}\label{eq:lemconnect}
    \frac{1}{2} \int_{\wh{\Omega}}\int_{\Omega - (B_{\frac{2}{3}L} \cup \;\bigcup_k B_1(x_k))} 
    \frac{1}{\abs{x-y}} \ud y\ud x
    \;\gtrsim\; \bigl\lvert
    \wh{\Omega}\bigr\rvert \ln H 
    \gtrsim (V-Z) \ln H.
  \end{equation}
\end{lemma}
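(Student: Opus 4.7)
The plan is to derive the estimate by first establishing a pointwise lower bound on the screened Coulomb potential: for every $x \in \wh{\Omega}$,
\[
\int_{\Omega - (B_{\frac{2}{3}L} \cup \;\bigcup_k B_1(x_k))} \frac{1}{\abs{x-y}} \ud y \;\gtrsim\; \ln H.
\]
Integrating this over $x \in \wh{\Omega}$ and invoking $\abs{\wh{\Omega}} \gtrsim V - Z$ from Lemma~\ref{lem:whOmega} yields the desired bound (the factor $\tfrac{1}{2}$ plays no role). By the definition of $\wh{\Omega}$, for each $x \in \wh{\Omega}$ the ball $B_{H-1}(x)$ is disjoint from $B_{\frac{2}{3}L}$ and from every $B_1(x_k)$, so it suffices to establish
\[
\int_{\Omega \cap B_{H-1}(x)} \frac{1}{\abs{x-y}} \ud y \;\gtrsim\; \ln H.
\]

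To prove this pointwise bound, I would decompose $B_{H-1}(x)$ into the disjoint annuli $A_{2k, 2k+2}(x)$ for $k = 0, 1, \ldots, K-1$ with $K \sim H$, and show that each annulus contains at least $\delta$ of the $\Omega$-mass, where $\delta$ is the Density Lemma constant. For fixed $k$, Lemma~\ref{lem:connect} yields $\abs{\partial B_{2k+1}(x) \cap \Omega} > 0$, so there exists a point $y_k \in \partial B_{2k+1}(x) \cap \Omega$ in the measure-theoretic sense. Provided $2k+1 < H - 2$, the distance conditions built into the definition of $\wh{\Omega}$ ensure that $B_1(y_k)$ is disjoint from $B_{\frac{2}{3}L}$ and from every $B_1(x_j)$, so that $\Omega \cap (B_{\frac{2}{3}L} \cup \;\bigcup_j B_1(x_j)) \subset \Omega - B_1(y_k)$ and every $z \in B_1(y_k)$ satisfies $\abs{z} \geq \frac{2}{3}L$. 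Lemma~\ref{lem:screenBall} then gives $\phi_{y_k}(z) > 0$ on $B_1(y_k)$, so the Density Lemma applies and yields $\abs{\Omega \cap B_1(y_k)} \geq \delta$. Since $B_1(y_k) \subset A_{2k, 2k+2}(x)$, we obtain $\abs{\Omega \cap A_{2k, 2k+2}(x)} \geq \delta$, and bounding $\abs{x - y}^{-1} \geq (2k+2)^{-1}$ on this annulus gives
\[
\int_{\Omega \cap B_{H-1}(x)} \frac{1}{\abs{x-y}} \ud y \;\geq\; \sum_{k=0}^{K-1} \frac{\delta}{2k+2} \;\gtrsim\; \ln K \;\sim\; \ln H.
\]

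The main subtlety is coordinating the two screening statements: Lemma~\ref{lem:connect} requires that $x \in \wh{\Omega}$ keep the entire ball $B_{H-1}(x)$ away from the screened region, whereas the Density Lemma, through Lemma~\ref{lem:screenBall}, requires that $B_1(y_k)$ itself lie outside $B_{\frac{2}{3}L}$ and outside all of the screening balls $B_1(x_j)$. Both requirements are met thanks to the $H$-buffer in the definition of $\wh{\Omega}$, which enforces $\abs{x} \geq \frac{2}{3}L + H$ and $\abs{x - x_j} \geq H$; this is precisely why $\wh{\Omega}$ was defined with the enlarged radii $\frac{2}{3}L + H$ and $H$ rather than $\frac{2}{3}L$ and $1$, and it is the only delicate point in the argument.
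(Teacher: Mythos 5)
Your proposal is correct and follows essentially the same route as the paper: reduce to a pointwise lower bound on the screened potential at each $x \in \wh{\Omega}$, then apply Lemma~\ref{lem:connect} to find $\sim H$ points of $\Omega$ on concentric spheres around $x$, verify via the buffer built into $\wh{\Omega}$ and Lemma~\ref{lem:screenBall} that the Density Lemma applies to unit balls about those points, and sum the resulting contributions $\delta/(2k+2)$ to obtain the logarithm. The only cosmetic difference is that you place your points on spheres of odd radii $2k+1$ (nested in annuli $A_{2k,2k+2}(x)$) while the paper uses even radii $2i$; both yield pairwise disjoint unit balls, and your accounting of the radius constraints ($2k+1<H-2$) matches the paper's $2M\leq H-2$.
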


\begin{proof}
  By Lemma~\ref{lem:whOmega}, it suffices to show that for any $x
  \in \wh{\Omega}$ 
  \begin{equation*}
    \frac{1}{2}\int_{\Omega - (B_{\frac{2}{3}L} \cup \;\bigcup_k B_1(x_k))} 
    \frac{1}{\abs{x - y}} \ud y 
    \gtrsim \ln H.  
  \end{equation*}
  We take the largest integer $M$ such that 
  \begin{equation}\label{eq:MHbound}
    2 M \leq  H - 2.
  \end{equation}
  By Lemma~\ref{lem:connect}, there exist points $y_1, y_2, \cdots,
  y_M$ such that for $i = 1, \cdots, M$ 
  \begin{equation*}
    y_i \in \partial B_{2i}(x) \quad \text{and} \quad 
    y_i \in \Omega \text{ in the sense of \eqref{eq:belongset}}.
  \end{equation*}
  Indeed, we have for the inner trace $\chi^i$ of the characteristic
  function $\chi$ of $\Omega$ that
  $\int_{\partial B_{2i}} \chi^i \ud \mc{H}^2 > 0$. Take an
  $\ud \mc{H}^2 \vert_{\partial B_{2i}}$-Lebesgue point $y_i$ of
  $\chi^i$ with $\chi^i(y_i) = 1$. Such a choice satisfies the
  conditions. See Figure~\ref{fig:connect} for an illustration.
  \begin{figure}[ht]
    \includegraphics[width = 0.95\textwidth]{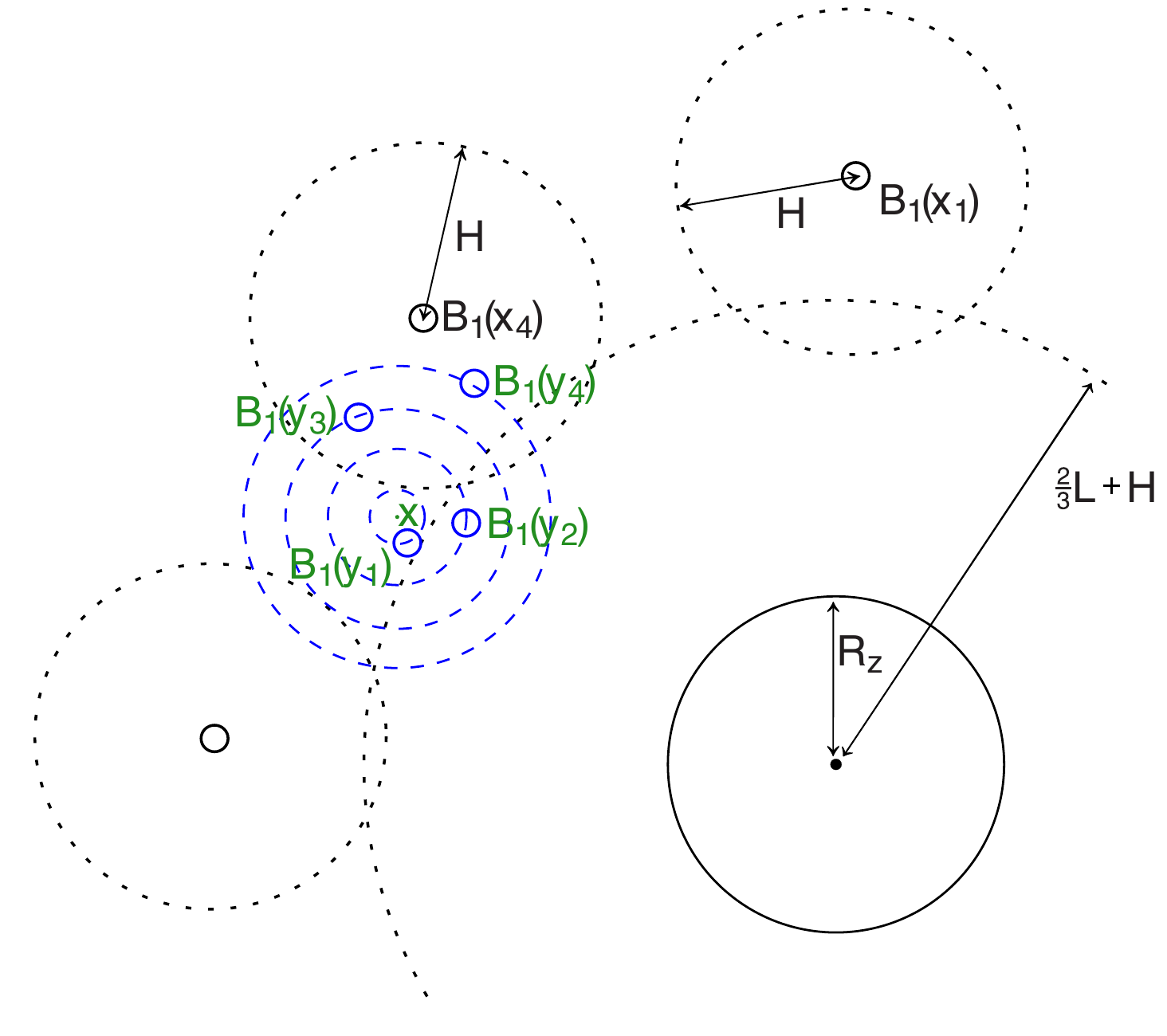}
    \caption{A point $x \in \wh{\Omega}$ and the sequence of points
      $\{y_1, \ldots, y_M\}$ as in the proof of
      Lemma~\ref{lem:CoulombEnergy}. \label{fig:connect}}
  \end{figure}

  We note that for fixed $i = 1, \ldots, M$ we have 
  \begin{equation*}\label{eq:absy}
    \abs{y_i} \geq \abs{x} - \abs{y_i - x} \geq \tfrac{2}{3} L + H  - 2M
    \stackrel{\eqref{eq:MHbound}}{>} \tfrac{2}{3} L + 1, 
  \end{equation*}
  and hence 
  \begin{equation}\label{eq:23L}
    B_1(y_i) \cap B_{\frac{2}{3} L} = \emptyset, 
  \end{equation}
  so that \eqref{eq:screenBall} in Lemma~\ref{lem:screenBall} holds on
  $B_1(y_i)$. % Also, because of $ \tfrac{2}{3} L \geq \tfrac{2}{3} R \geq 
  % \tfrac{4}{3} R_Z > R_Z + 1$,
  % \eqref{eq:absy} implies
  % \begin{equation*}
  %   B_1(y_i) \cap B_{R_Z} = \emptyset. 
  % \end{equation*}
  Moreover, because of 
  \begin{equation*}
    \abs{y_i - x_k} \geq \abs{x - x_k} - \abs{y_i - x} \geq H - 2M \stackrel{\eqref{eq:MHbound}}{\geq} 2
  \end{equation*}
  for any $k = 1, \ldots, N$ we have 
  \begin{equation*}
    B_1(y_i) \cap  \bigcup_{k=1}^N B_1(x_k) = \emptyset.
  \end{equation*}
  Combined with \eqref{eq:23L}, this gives
  \begin{equation}\label{eq:ballcontained}
    B_1(y_i) \cap \bigl(B_{\frac{2}{3}L} \cup  \bigcup_{k=1}^N B_1(x_k)\bigr) = \emptyset. 
  \end{equation}
  Therefore, \eqref{eq:screenBall} yields in particular 
  \begin{equation*}
    \phi_{y_i}(y') = \int_{\Omega - B_1(y_i)} \frac{1}{\abs{y' - y}} \ud y - \frac{Z}{\abs{y'}} > 0
    \qquad \text{for} \quad y' \in B_1(y_i).
  \end{equation*}
  Hence we may apply Lemma~\ref{lem:density} to get 
  \begin{equation}\label{eq:massball}
    \abs{B_1(y_i) \cap \Omega} \gtrsim 1. 
  \end{equation}
  Finally, by construction, $\{ B_1(y_i)\}_{i =1, \ldots, M}$ are
  pairwise disjoint so that for our fixed $x \in \wh{\Omega}$
  \begin{equation*}
    \begin{aligned}
      \frac{1}{2}\int_{\Omega - (B_{\frac{2}{3}L} \cup \; \bigcup_k B_1(x_k))}
      \frac{1}{\abs{x - y}} \ud y &
      \stackrel{\eqref{eq:ballcontained}}{\geq} \frac{1}{2} \sum_{i =
        1}^M \int_{\Omega \cap B_1(y_i)}
      \frac{1}{\abs{x - y}} \ud y\\
      & \stackrel{\eqref{eq:massball}}{\gtrsim} \sum_{i = 1}^M
      \frac{1}{2i + 1} \sim \ln M.
    \end{aligned}
  \end{equation*}
  We conclude by returning to the choice of $M$.
\end{proof}

\bigskip

We are now ready to prove Proposition~\ref{prop:refined}. 
\begin{proof}[Proof of Proposition~\ref{prop:refined}]
  Suppose $\Omega$ is a minimizer with $V - Z \gg
  Z_{\eff}$. % We will show that we can lower the energy by
  % constructing a comparing set, and hence a contradiction.
  First note that
  \begin{equation*}
    \int_{\tfrac{2}{3}L}^{\tfrac{2}{3} L + H} \abs{\partial B_r \cap \Omega} \ud r = 
    \abs{\Omega \cap A_{\tfrac{2}{3}L, \tfrac{2}{3} L + H}} \leq 
    \abs{\Omega \cap A_{R_Z, \tfrac{2}{3} L + H}} 
    = \abs{\Omega \cap B_{\tfrac{2}{3} L + H} } - \abs{\Omega \cap B_{R_Z}}
    \lesssim Z_{\eff},
  \end{equation*}
  where in the last inequality, we have used \eqref{eq:OmegaLH} and
  \eqref{eq:surpluscharge}.  Therefore, there exists a $r_0 \in
  [\tfrac{2}{3}L, \tfrac{2}{3} L + H]$ such that
  \begin{equation}\label{eq:boundaryr0}
    \abs{\partial B_{r_0} \cap \Omega} \leq \slashint_{\tfrac{2}{3}L}^{\tfrac{2}{3} L + H } 
    \abs{\partial B_r \cap \Omega} \ud r \lesssim \frac{1}{H} Z_{\eff} 
    \leq Z_{\eff}. 
  \end{equation}

  Let us consider the comparison set
  $\wb{\Omega} = \Omega \cap ( B_{r_0} \cup \bigcup_{k=1}^N
  B_1(x_k))$.
  We have an upper bound for the energy of the set $\Omega$ (as in
  Lemma~\ref{lem:Eupperbound}):
  \begin{equation*}
    \mf{E}_Z(\Omega) -  \mf{E}_Z(\wb{\Omega}) \leq E_0(\abs{\Omega
      - \wb{\Omega}}) \lesssim \abs{\Omega
      - \wb{\Omega}} + 1. 
  \end{equation*}
  Using Proposition~\ref{prop:closeball}, we have 
  \begin{equation*}
    \abs{\Omega -\wb{\Omega}}
    \leq \abs{\Omega - B_{R_Z}}
    = \abs{\Omega} - \abs{B_{R_Z}} + \abs{B_{R_Z} - \Omega}
    \leq V - Z + C Z_{\eff} \lesssim V - Z. 
  \end{equation*}
  The last two estimates combine to, as $V - Z \gg 1$,
  \begin{equation}\label{eq:EZupper}
    \mf{E}_Z(\Omega) -  \mf{E}_Z(\wb{\Omega}) \lesssim V - Z. 
  \end{equation}
  On the other hand, 
  \begin{equation}\label{eq:EZlower}
      \begin{aligned}
        \mf{E}_Z(\Omega) - \mf{E}_Z(\wb{\Omega}) & \geq -
        \abs{\partial
          (B_{r_0}\cup  \cup_k B_1(x_k)) \cap \Omega} \\
        & \qquad + \int_{\Omega - \wb{\Omega}} \ud x \left(
          \int_{\wb{\Omega}} \frac{1}{\abs{x-y}} \ud y -
          \frac{Z}{\abs{x}} \right) + \frac{1}{2}
        \int_{\Omega-\wb{\Omega}} \int_{\Omega-\wb{\Omega}}
        \frac{1}{\abs{x - y}} \ud x
        \ud y \\
        & \geq - \abs{\partial B_{r_0} \cap \Omega}
        - \sum_{k=1}^N \abs{\partial B_1(x_k)} \\
        & \qquad + \int_{\Omega - \wb{\Omega}} \ud x \left(
          \int_{\wb{\Omega} \cap (B_{\frac{2}{3}L} \cup\;\bigcup_k
            B_1(x_k))} \frac{1}{\abs{x-y}} \ud y -
          \frac{Z}{\abs{x}} \right) \\
        & \qquad + \int_{\Omega - \wb{\Omega}} \int_{\wb{\Omega} -
          (B_{\frac{2}{3}L} \cup \;\bigcup_k B_1(x_k))}
        \frac{1}{\abs{x-y}} \ud y \ud x + \frac{1}{2} \int_{\Omega-
          \wb{\Omega}} \int_{\Omega- \wb{\Omega}} \frac{1}{\abs{x -
            y}} \ud x \ud y \\
        & \geq - C Z_{\eff} + \int_{\Omega - \wb{\Omega}} \ud x \left(
          \int_{\wb{\Omega} \cap (B_{\frac{2}{3}L} \cup\;\bigcup_k
            B_1(x_k))} \frac{1}{\abs{x-y}} \ud y -
          \frac{Z}{\abs{x}} \right) \\
        & \qquad + \frac{1}{2} \int_{\Omega - \wb{\Omega}} \int_{\wb{\Omega} -
          (B_{\frac{2}{3}L} \cup \;\bigcup_k B_1(x_k))}
        \frac{1}{\abs{x-y}} \ud y \ud x + \frac{1}{2} \int_{\Omega-
          \wb{\Omega}} \int_{\Omega- \wb{\Omega}} \frac{1}{\abs{x -
            y}} \ud x \ud y,
    \end{aligned}
  \end{equation}
  where we have used \eqref{eq:boundaryr0} and that
  $N \lesssim Z_{\eff}$ to obtain the last inequality.  To further
  estimate the right hand side, we observe that since
  $r_0 \geq \frac{2}{3}L$
  \begin{equation*}
    \wb{\Omega} \cap \Bigl(B_{\frac{2}{3}L} \cup\;\bigcup_k
    B_1(x_k)\Bigr) = (\Omega \cap B_{r_0}) \cap \Bigl(B_{\frac{2}{3}L} \cup\;\bigcup_k
    B_1(x_k)\Bigr) = \Omega \cap \Bigl(B_{\frac{2}{3}L} \cup\;\bigcup_k
    B_1(x_k)\Bigr), 
  \end{equation*}
  and also 
  \begin{equation*}
    \Omega - \wb{\Omega} = \Omega - \Bigl(B_{r_0} \cup
  \; \bigcup_k B_1(x_k)\Bigr) \subset \bigl(B_{\frac{2}{3}L}\bigr)^c.
  \end{equation*}
  Therefore, using Lemma~\ref{lem:screenBall}, the second term on the
  right hand side of \eqref{eq:EZlower} is non-negative. Thus, we
  obtain
  \begin{equation*}
    \mf{E}_Z(\Omega) - \mf{E}_Z(\wb{\Omega}) \geq - C Z_{\eff} 
    + \frac{1}{2} \int_{\Omega - \wb{\Omega}} \int_{\Omega - (B_{\frac{2}{3}L} \cup \;\bigcup_k B_1(x_k))} \frac{1}{\abs{x - y}} \ud y \ud x,
  \end{equation*}
  where the last two terms on the right hand side of
  \eqref{eq:EZlower} have been combined. Since
  $r_0 \leq \frac{2}{3}L + H$, we have
  $\wh{\Omega} \subset \Omega - \wb{\Omega} = \Omega - (B_{r_0} \cup
  \; \bigcup_k B_1(x_k))$, and hence 
  \begin{equation*}
    \mf{E}_Z(\Omega) - \mf{E}_Z(\wb{\Omega}) \geq - C Z_{\eff} 
    + \frac{1}{2} \int_{\wh{\Omega}} \int_{\Omega - (B_{\frac{2}{3}L} \cup \;\bigcup_k B_1(x_k))} \frac{1}{\abs{x - y}} \ud y \ud x
    \stackrel{\eqref{eq:lemconnect}}{\geq} - C Z_{\eff} + \frac{1}{C} (V - Z) \ln H.
  \end{equation*}
  Combining with the upper bound \eqref{eq:EZupper}, we obtain
  \begin{equation*}
    \frac{1}{C} (V - Z) \ln H  - C Z_{\eff} \leq V - Z, 
  \end{equation*}
  or equivalently, as $Z_{\eff} \ll (V - Z)$, $(V - Z) \ln H \lesssim
  V - Z$.  Hence, we get a contradiction by choosing $H$ sufficiently
  large. This concludes the proof.
\end{proof}

\section{Proof of the Density Lemma~\ref{lem:density}} 
\label{sec:density}

We now prove the Density Lemma~\ref{lem:density}, and so complete
the proof of Theorem~\ref{thm:nonexist}.  Let us recall and
reformulate the set up first. For simplicity of notation, up to a
translation, we may assume that the ball considered in
Lemma~\ref{lem:density} is centered at the origin. Define the energy
functional
\begin{equation}\label{eq:mcE}
  \mc{E}_{\phi}(\Omega) = \abs{\partial \Omega \cap  B} + \int_B \phi \chi_{\Omega} + \frac{1}{2} \iint_{B \times B} \frac{\chi_{\Omega}(x) \chi_{\Omega}(y)}{\abs{x - y}} \ud x \ud y
\end{equation}
such that $\phi > 0$ and $\Delta \phi = 0$ in $B$ (note that by our
assumption the nucleus is outside the ball). This is the local (in
$B$) contribution to the total energy when the set $\Omega$ is fixed
outside the ball. The optimality of $\Omega$ then implies
\begin{equation}\label{eq:mcEopt}
  \begin{aligned}
    & \mc{E}_{\phi}(\Omega) \leq \mc{E}_{\phi}(\Omega') + E_0\bigl(\abs{\Omega} - \abs{\Omega'}\bigr) \\
    & \text{for all } \Omega' \text{ such that } \abs{\Omega'} \leq
    \abs{\Omega} \text{ and } \Omega' \Delta \Omega \subset\subset B.
  \end{aligned}
\end{equation}
Here $\Omega' \Delta \Omega = (\Omega' - \Omega) \cup (\Omega -
\Omega')$ is the symmetric difference between two sets.  Note that we
do not require that the volume $\abs{\Omega' \cap B}$ equals to
$\abs{\Omega \cap B}$ as we have the freedom to remove part of the
mass from the ball $B$ and put it far away from $\Omega$ and pay
energy $E_0(\abs{\Omega} - \abs{\Omega'})$ (as in
Lemma~\ref{lem:Eupperbound}).  Our goal is to show that if the origin
is in the optimal set $\Omega$ in the measure-theoretic sense of
\eqref{eq:belongset}, the volume $\abs{\Omega \cap B}$ is at least a
universal constant $\delta > 0$.

\smallskip

Let us give a sketch of the proof here.  The first step is to reduce
by dilating the ball to $B_{R}$ to the case that
$\sup \abs{\nabla \phi}$ is small and the volume inside is $1$ as in
Section~\ref{sec:scaling}. Then we consider the problem on a slightly
smaller ball $B_{R_0}$ for $R_0 \in [R-1, R]$, so that we may assume
that the intersection of $\Omega$ and the boundary of that ball
$\abs{\Omega \cap \partial B_{R_0}}$ is at most $1$. The proof is
completed by considering separately the case that $\Omega$ has a
negligible volume inside $B_{R_0}$ (Section~\ref{sec:boundary}) and
the case that $\abs{\Omega \cap B_{R_0}}$ consists of a large fraction
of $\abs{\Omega}$ (Section~\ref{sec:interior}). In both cases, we
compare the minimizer with sets obtained by local deformation inside
the ball.

\subsection{Scaling argument}\label{sec:scaling}

Let us start with the following lemma which states that the potential
$\phi$ cannot be too small if $\abs{\nabla \phi}$ is large.
\begin{lemma}\label{lem:intphi}
  Let $\phi$ satisfy the equation 
  \begin{equation}
    - \Delta \phi = 4\pi \chi_{\Omega} \quad \text{in } B_2 
  \end{equation}
  for some set $\Omega$ and assume that $\phi \geq 0$ on
  $B_1$. We then have
  \begin{equation}\label{eq:intphi}
    \sup_{B_1} \abs{\nabla \phi} \lesssim \inf_{B_1} \phi + 1. 
  \end{equation}
\end{lemma}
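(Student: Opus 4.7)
The plan is to decompose $\phi$ into a particular solution of the Poisson equation plus a harmonic remainder, and to control each piece separately.

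First, set $v(x) := \int_{B_2} \chi_\Omega(y)\,|x-y|^{-1}\,dy$, the Newton potential of the source restricted to $B_2$, so that $-\Delta v = 4\pi\chi_\Omega$ on $B_2$. Because both $|x-y|^{-1}$ and $|x-y|^{-2}$ are integrable over the bounded three-dimensional set $B_2$ while $0\leq \chi_\Omega\leq 1$, differentiation under the integral sign gives the elementary pointwise bounds
\[
\sup_{B_1}|v| + \sup_{B_1}|\nabla v| \lesssim 1.
\]
The remainder $h := \phi - v$ is harmonic on $B_2$, and from $\phi\geq 0$ on $B_1$ together with the $L^\infty$-bound on $v$ we deduce $h\geq -C$ on $B_1$ for a universal constant $C$, so that $\tilde h := h + C$ is a nonnegative harmonic function on $B_1$. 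For such functions, the mean-value property applied to $\partial_i \tilde h$ on any ball $B_r(x_0)\subset B_1$ yields the pointwise gradient estimate $|\nabla\tilde h(x_0)|\leq (3/r)\,\tilde h(x_0)$, while Harnack's inequality gives $\tilde h(x_0)\lesssim \inf_{B_1}\tilde h$ on any compactly contained subball. Combining these with $\nabla\phi = \nabla\tilde h + \nabla v$ and $\inf_{B_1}\tilde h \leq \inf_{B_1}\phi + C$ produces the stated bound.

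The main technical point is that $\phi\geq 0$ is imposed only on $B_1$, whereas $h$ is harmonic on the strictly larger $B_2$; as a consequence the Harnack and gradient constants for $\tilde h$ degenerate as one approaches $\partial B_1$, so strictly speaking the supremum on the left must be taken over an interior subball. This loss of radius is absorbed by the rescaling arguments of the next subsection; alternatively, one may replace the global Newton potential $v$ by the Green representation of $\phi$ on $B_1$ itself, which extracts the nonnegative harmonic component directly from the nonnegativity of the boundary trace $\phi\vert_{\partial B_1}$ (well-defined by $W^{2,p}$-regularity of $\phi$) and avoids any discrepancy between the two radii.
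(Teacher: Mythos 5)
Your decomposition of $\phi$ into the Newton potential $v$ of $\chi_\Omega$ over $B_2$ plus a harmonic remainder, the uniform bounds on $v$ and $\nabla v$, and the combination of the interior gradient estimate with Harnack's inequality for the shifted remainder $\tilde h$ is exactly the paper's proof (there $\psi = v$ and $h = \phi - \psi + 2\pi$). You are right to flag a radius discrepancy, but the issue is sharper than a technical loss of radius: Harnack for $\tilde h \geq 0$ harmonic on $B_1$ compares $\sup_K\tilde h$ with $\inf_K\tilde h$ over the \emph{same} compactly contained $K \subset\subset B_1$, and since $\inf_K\tilde h \geq \inf_{B_1}\tilde h$ this does \emph{not} yield $\tilde h(x_0) \lesssim \inf_{B_1}\tilde h$; the Green-representation route you propose as an alternative, and the paper's own application of Harnack on $B_{3/2}$ (where $h \geq 0$ has not been established), run into the same obstruction.

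In fact \eqref{eq:intphi} fails as written: take $\Omega = \emptyset$, a unit vector $e$, and $\phi(x) = A\,(e\cdot x + 1)$ with $A$ large. Then $-\Delta\phi = 0 = 4\pi\chi_\Omega$, $\phi \geq 0$ on $B_1$, $\inf_{B_1}\phi = 0$, but $\sup_{B_1}\abs{\nabla\phi} = A$. The correct and sufficient statement, which your argument does deliver, is $\sup_{B_{1/2}}\abs{\nabla\phi} \lesssim \inf_{B_{1/2}}\phi + 1$ (or any fixed ball compactly contained in $B_1$, appearing on \emph{both} sides). Case 2 of the proof of Lemma~\ref{lem:density} then goes through with $B_{1/2}$ in place of $B_1$ and a correspondingly smaller ball in \eqref{eq:expcontrol}.
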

\begin{proof}
  Define $\psi$ as 
  \begin{equation*}
    \psi(x) = \int_{B_2} \frac{1}{\abs{x - y}} \chi_{\Omega}(y) \ud y. 
  \end{equation*}
  Then $h = \phi - \psi + 2\pi$ is harmonic in $B_2$ and non-negative since 
  \begin{equation*}
    \psi(x) \leq \int_{B_2} \frac{1}{\abs{x - y}} \ud y 
    \leq \int_{B_2} \frac{1}{\abs{y}} \ud y = 2 \pi. 
  \end{equation*} 
  Using interior regularity for $h$ in terms of 
  \begin{equation*}
    \sup_{B_1} \abs{\nabla h} \lesssim \sup_{B_{3/2}} h
  \end{equation*}
  and then the Harnack's inequality, \textit{i.e.}, 
  \begin{equation*}
    \sup_{B_{3/2}} h \lesssim  \inf_{B_{3/2}} h \lesssim \inf_{B_1} h,  
  \end{equation*}
  we obtain by definition of $h$ 
  \begin{equation*}
    \begin{aligned}
      \sup_{B_1} \abs{\nabla \phi} & \leq \sup_{B_1} \abs{\nabla h}
      + \sup_{B_1} \abs{\nabla \psi}  \\
      & \lesssim \inf_{B_1} h + \sup_{B_1} \abs{\nabla \psi} \\
      & \leq \inf_{B_1} \phi + 2 \pi + \sup_{B_1}\abs{\nabla \psi} \\
      & \leq \inf_{B_1} \phi + C.
    \end{aligned}
  \end{equation*}
\end{proof}

% The infimum of the potential $\phi$ is in turn controlled by the
% volume. 
% \begin{prop}
%   Let $\Omega$ be optimal in the sense of \eqref{eq:mcEopt}. Then 
%   \begin{equation}
%     \abs{\Omega \cap B_1} \gtrsim \exp\Bigl(c_0 \inf_{B_1} \phi \Bigr),
%   \end{equation}
%   where $c_0$ is a universal constant.
% \end{prop}

% \begin{proof}
%   For $r \in [0, 1]$, compare $\Omega$ with the set $\Omega' = \Omega
%   - B_r$, we get 
%   \begin{equation*}
%     \frac{\ud}{\ud r} \abs{\Omega \cap B_r} =  \abs{\Omega \cap \partial B_r} \gtrsim  \abs{\Omega \cap B_r} \inf_{B_1} \phi. 
%   \end{equation*}
%   Integrating the above differential inequality from $r = 0$ to $1$,
%   we obtain the desired inequality. 
% \end{proof}

Defining for sets $\Omega \subset B_R$
\begin{equation}\label{eq:defER}
  \mc{E}_{\phi, R}(\Omega) = \abs{\partial \Omega \cap B_{R}} + \int_{B_{R}} \phi \chi_{\Omega} + \frac{1}{R^3} \frac{1}{2} \iint_{B_{R} \times B_{R}} 
  \frac{\chi_{\Omega}(x) \chi_{\Omega}(y)}{\abs{x - y}} \ud x \ud y,
\end{equation}
we now show that it suffices to consider the following equivalent
formulation of Lemma~\ref{lem:density}.
\begin{lemma}\label{lem:moddensity}
  Let $\Omega$ be a set with $0 \in \Omega$ (in the sense of \eqref{eq:belongset})
  and $\abs{\Omega \cap B_R} = 1$ that satisfies 
  \begin{equation*}
    \begin{aligned}
      & \mc{E}_{\phi, R}(\Omega) \leq \mc{E}_{\phi, R}(\Omega') + R^2 E_0\bigl(R^{-3} (\abs{\Omega} - \abs{\Omega'})\bigr) \\
      & \text{for all } \Omega' \text{ such that } \abs{\Omega'} \leq
      \abs{\Omega} \text{ and } \Omega' \Delta \Omega \subset\subset B_R, 
    \end{aligned}
  \end{equation*}
  where $\phi > 0$, $\Delta \phi = 0$ and $\sup_{B_{R}}
  \abs{\nabla \phi} \leq 1 / R$. Then we have $R \lesssim 1$. 
\end{lemma}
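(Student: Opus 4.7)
The plan is to argue by contradiction, assuming $R \gg 1$, and to follow the roadmap given after the lemma statement: extract a good slicing radius, split into a boundary and an interior case, and construct a local competitor in each.

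For the slicing step, since $\abs{\Omega \cap A_{R-1,R}} \leq \abs{\Omega \cap B_R} = 1$, the coarea identity
\[
\int_{R-1}^{R} \abs{\Omega \cap \partial B_r}\,\rd r \leq 1
\]
produces some $R_0 \in [R-1,R]$ with $\abs{\Omega \cap \partial B_{R_0}} \leq 1$. Writing $V_0 := \abs{\Omega \cap B_{R_0}}$, I would split into the boundary case $V_0 \leq 1/2$ and the interior case $V_0 \geq 1/2$.

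In the boundary case, most of the unit mass sits in the thin annulus $A_{R_0,R}$, and the natural competitor is $\Omega' = \Omega \setminus A_{R_0,R}$, which removes volume $1 - V_0 \geq 1/2$. The local minimality hypothesis bounds the interfacial and $\phi$-saving in the annulus by $\abs{\Omega \cap \partial B_{R_0}} + R^2 E_0(R^{-3}(1-V_0)) \lesssim 1$, since $R^{-3}(1-V_0) \ll 1$ for $R \gg 1$ so that $R^2 E_0(R^{-3}V) \lesssim V^{2/3}$ is of order one. On the other hand, an isoperimetric lower bound for $\Omega \cap A_{R_0,R}$ should force the interior perimeter $\abs{\partial \Omega \cap A_{R_0,R}}$ to be $\gtrsim 1$, producing the contradiction; to make this rigorous, one likely has to perform an additional slicing near $r = R$ so that the outer spherical cross-section is also controlled. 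In the interior case, the mass $V_0 \geq 1/2$ is confined in $B_{R_0}$, while the hypothesis $\sup\abs{\nabla \phi} \leq 1/R$ and the Coulomb coefficient $R^{-3}$ make the local functional essentially the classical perimeter-plus-weak-Coulomb problem of \cite{LuOtto:14}. A local deformation of $\Omega \cap B_{R_0}$ supported in $B_{R_0-1}$ (e.g.~a slight inward dilation or pinching off of a thin piece about the origin) should convert the minimality inequality into a sharp lower bound on $\abs{\partial \Omega \cap B_{R_0}}$, contradicted when $R \gg 1$ since the volume-removal cost $R^2 E_0(R^{-3}\,\cdot\,)$ becomes negligible in this rescaled regime.

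The principal obstacle is the boundary case: the annulus $A_{R_0,R}$ has thickness $1$ but surface area $\sim R^2$, so its geometry is slab-like rather than bulk-like, and the standard three-dimensional isoperimetric inequality is far from tight for sets of volume $\sim 1$ that could in principle spread thinly over the sphere. One has to exploit both the smallness of the spherical cross-sections of $\Omega$ on $\partial B_{R_0}$ (and on a second sliced sphere near $r = R$) and the measure-theoretic containment $0 \in \Omega$ to rule out such configurations and force enough interfacial area to arise in the interior of the annulus.
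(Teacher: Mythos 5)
Your setup — slicing to find $R_0 \in [R-1,R]$ with $\abs{\Omega \cap \partial B_{R_0}} \leq 1$ and splitting into the cases $\abs{\Omega \cap B_{R_0}} \leq 1/2$ and $\geq 1/2$ — matches the paper exactly. But the core of the argument has a genuine gap, and it is precisely the one you flag at the end without resolving.

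In the boundary case your plan is to remove $\Omega\cap A_{R_0,R}$ in one stroke, deduce from local minimality that $\abs{\partial\Omega\cap A_{R_0,R}}\lesssim 1$, and contradict this with an isoperimetric lower bound $\gtrsim 1$. That is not a contradiction: $\gtrsim 1$ and $\lesssim 1$ are perfectly compatible, so the argument terminates without producing any bound on $R$. To conclude $R\lesssim 1$ you need a mechanism that makes $R$ appear, and your single competitor never does. The paper's mechanism is crucially different: it proves the pointwise statement $\abs{\Omega\cap A_{r-1/2,r+1/2}}\gtrsim 1$ for \emph{every} $r$ in a range of length $\sim R$ (namely $[\frac{3}{4}R,\frac{7}{8}R]$), and then summing over $r$ against the total volume $\abs{\Omega\cap B_R}=1$ forces $R\lesssim 1$. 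Getting that pointwise lower bound requires comparing against a two-parameter family of competitors $\Omega'=F_\lambda(\Omega-B_{r+h/2})$: you remove the annular slice $\Omega\cap A_{r-h/2,r+h/2}$, but you compensate the lost volume by applying an explicit outward-stretching diffeomorphism $F_\lambda$ to the outer part $\Omega-B_{r+h/2}$ (Proposition~\ref{prop:deformboundary}). This volume compensation is essential: if you simply pay the $R^2 E_0(R^{-3}\,\cdot)$ penalty for the removed slice volume $V(h)$, the cost $\lesssim V(h)^{2/3}$ exactly cancels the isoperimetric gain $\gtrsim V(h)^{2/3}$ and the differential inequality $V'(h)\gtrsim V(h)^{2/3}$ is lost. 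Your plan contains neither the family of $r$'s nor the compensating deformation. A further technical point you would also need: the a priori estimate $\mc{E}_{\phi,R}(\Omega)-\abs{\Omega\cap\partial B_R}\lesssim 1$ of Lemma~\ref{lem:energyapriori}, coupled with the good $-\abs{E\cap\partial B_R}$ term in the perimeter estimate for $F_\lambda$, to tame a potentially huge $\abs{\partial\Omega\cap B_R}$ near the outer sphere.

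The interior case has the same structural gap: a single "slight inward dilation or pinching off" will at best give an $\Or(1)$ lower bound on the perimeter, which again does not grow with $R$ and gives no contradiction. The paper instead builds a localized vector field $\xi$ in a good radius-$1$ ball $B$ (Lemma~\ref{lem:ballexist} and Lemma~\ref{lem:vector}), uses its flow as the volume-compensating deformation (Proposition~\ref{prop:vector}), and then reruns the same differential inequality in $r$ as in the boundary case. Finally, your proposed competitor $\Omega'=\Omega\setminus A_{R_0,R}$ is not admissible as written, since $\Omega'\Delta\Omega$ must be compactly contained in $B_R$; this forces a second slicing near $r=R$, as you note, but even with that fix the argument does not close for the reasons above.
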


We prove Lemma~\ref{lem:density} assuming Lemma~\ref{lem:moddensity},
which in turn will be proved in the remaining of this section.
\begin{proof}[Proof of Lemma~\ref{lem:density}]
  Let $M$ be a large universal constant that we will fix later, we
  will proceed in two cases depending whether $\sup_{B_1(x)} \abs{\nabla
    \phi_x} \leq M$ or not, where we recall that
  \begin{equation*}
    \phi_x(y) = \int_{\Omega - B_1(x)} \frac{1}{\abs{y - z}} \ud z - \frac{Z}{\abs{y}}.  
  \end{equation*}
  Without loss of generality, we may assume $x = 0$ (by a translation)
  and $\abs{\Omega \cap B_1} \leq 1$, we also simplify the notation
  and denote $\phi = \phi_x$.

  Case 1. $\sup_{B_1} \abs{\nabla \phi} \leq M$. In this case, we will
  dilate the ball $B_1$ to a ball with large radius so that the
  dilated set  has volume $1$ inside. For this, we use a
  change of variable $x \mapsto R x$, with $R^3 = 1 / \abs{\Omega \cap
    B_1}$, and define
  \begin{align*}
    & \wt{\Omega} = \{ x \mid x / R \in \Omega \}; \\
    & \wt{\phi}(x) = \frac{1}{R} \phi(x / R).
  \end{align*}
  We have then 
  \begin{equation*}
    \mc{E}_{\phi}(\Omega) = \frac{1}{R^2} \abs{\partial \Omega \cap B_{R}} + \frac{1}{R^2} \int_{B_{R}} \wt{\phi} \chi_{\wt{\Omega}} + \frac{1}{R^5} \frac{1}{2} \iint_{B_{R} \times B_{R}} \frac{\chi_{\wt{\Omega}}(x) \chi_{\wt{\Omega}}(y)}{\abs{ x- y}} \ud x \ud y= \frac{1}{R^2} \mc{E}_{\wt{\phi}, R}(\wt{\Omega}),
  \end{equation*}
  by the definition of $\mc{E}_{\phi, R}$ in \eqref{eq:defER}.  Also
  it is easy to see that
  \begin{equation*}
    \sup_{B_{R}} \abs{\nabla \wt{\phi}} \leq M / R^2. 
  \end{equation*}
  Without loss of generality, we may assume that $ M / R \leq 1$ (as
  otherwise, $\abs{\Omega \cap B_1} = 1 / R^3 \geq M^{-3}$, which
  readily implies Lemma~\ref{lem:density}). Hence, we get $\sup_{B_R}
  \abs{\nabla \wt{\phi}} \leq 1/ R$. Applying
  Lemma~\ref{lem:moddensity} to $\wt{\Omega}$ and $\wt{\phi}$, we
  obtain that $R \lesssim 1$, and hence
  \begin{equation*}
    \abs{\Omega \cap B_1} \gtrsim 1. 
  \end{equation*}
  
  Case 2. $\sup_{B_1} \abs{\nabla \phi} > M$. Lemma~\ref{lem:intphi}
  then implies, by taking $M$ sufficiently large, that 
  \begin{equation}\label{eq:defc0}
    C_0 = \inf_{B_1} \phi \gtrsim  
    \sup_{B_1} \abs{\nabla \phi} - 1  \gg 1. 
  \end{equation}
  For $r \in [0, 1)$, consider the competitor set $\Omega' = \Omega -
  B_r$; we have (as in Lemma~\ref{lem:Eupperbound})
  \begin{equation*}
    \mc{E}_{\phi}(\Omega) \leq \mc{E}_{\phi}(\Omega') + E_0\bigl( 
    \abs{\Omega} - \abs{\Omega'} \bigr), 
  \end{equation*}
  which implies that 
  \begin{equation*}
    \abs{\partial(\Omega \cap B_r)} + \abs{\Omega \cap B_r} \inf_{B_1} \phi  + \frac{1}{2} \iint_{(\Omega \cap B_r) \times (\Omega\cap B_r)}
    \frac{1}{\abs{x - y}} \ud x \ud y \\
    \leq 2 \abs{\Omega \cap \partial B_r} + E_0(\abs{\Omega \cap B_r}).
  \end{equation*}
  Since $E_0(\abs{\Omega \cap B_r}) \leq \mf{E}_0(\Omega \cap B_r)$,
  we arrive at 
  \begin{equation*}
    \abs{\Omega \cap B_r} \leq 
    \frac{2}{\inf_{B_1} \phi}  \, \abs{\Omega \cap \partial B_r}
    =
    \frac{2}{C_0}  \, \abs{\Omega \cap \partial B_r}. 
  \end{equation*}
  Using $\abs{\Omega \cap \partial B_r} = \frac{\rd}{\rd r}
  \abs{\Omega \cap B_r}$, this yields 
  \begin{equation*}
    \abs{\Omega \cap B_r} \leq \exp\Bigl( - \frac{1-r}{2} C_0 \Bigr) \abs{\Omega \cap B_1}. 
  \end{equation*}
  Together with $\abs{\Omega \cap B_1} \leq 1$, we obtain 
  \begin{equation}\label{eq:expcontrol}
    \abs{\Omega \cap B_{\frac{1}{2}}} \leq \exp\Bigl(- \frac{1}{4} C_0\Bigr). 
  \end{equation}
  Since by assumption, $0 \in \Omega$ in the sense of
  \eqref{eq:belongset}, we have
  $\abs{\Omega \cap B_{\frac{1}{2}}} > 0$, so that we may as in Case 1
  dilate the ball $B_{\frac{1}{2}}$ to $B_R$ such that
  $\abs{\wt{\Omega} \cap B_R} = 1$. By \eqref{eq:expcontrol} we have
  \begin{equation*}
    R \geq \exp\Bigl(\frac{1}{12} C_0 \Bigr). 
  \end{equation*}
  For the dilated potential, we have
  \begin{equation*}
    \sup_{B_R} \abs{\nabla \wt{\phi}} \leq \frac{1}{R^2} 
    \sup_{B_1} \abs{\nabla \phi} \stackrel{\eqref{eq:defc0}}{\lesssim} 
    \frac{1}{R^2} C_0 
    \leq \frac{1}{R} C_0 \exp\Bigl(- \frac{1}{12} C_0 \Bigr), 
  \end{equation*}
  so that Lemma~\ref{lem:moddensity} is applicable for $C_0 \gg 1$ (\ie, $M \gg 1$), yielding $R\lesssim 1$ and thus $\abs{\Omega \cap
    B_{\frac{1}{2}}} \gtrsim 1$.
\end{proof}

To prove Lemma~\ref{lem:moddensity}, first observe that, while we do
not have a priori control on $\abs{\Omega \cap \partial B_{R}}$, we
have
\begin{equation*}
  \int_{R-1}^{R} \abs{\Omega \cap \partial B_r} \ud r 
  = \abs{\Omega \cap A_{R - 1, R}} \leq 1. 
\end{equation*}
It results that, there exists $R_0 \in [R-1, R]$ such that
$\abs{\Omega \cap \partial B_{R_0}} \leq 1$. We will proceed with the
proof of Lemma~\ref{lem:moddensity} in two cases, depending on how
much volume the set $\Omega \cap B_{R_0}$ has
\begin{itemize}
\item[Case 1.] $\abs{\Omega \cap B_{R_0}} \leq 1/2$, studied in
  Section~\ref{sec:boundary}, and
\item[Case 2.] $\abs{\Omega \cap B_{R_0}} \geq 1/2$, studied in
  Section~\ref{sec:interior}.
\end{itemize}
Without loss of generality, we will always assume $R \gg 1$ as
otherwise, there is nothing to prove.

\subsection{Proof of Lemma~\ref{lem:moddensity}: Case $\abs{\Omega  \cap B_{R_0}} \leq 1/2$}
\label{sec:boundary}

Since $\abs{\Omega \cap B_{R}} = 1$, we have $\abs{\Omega \cap
  A_{R_0, R}} \geq 1/2$.

\begin{figure}[ht]
\includegraphics[width = 0.95\textwidth]{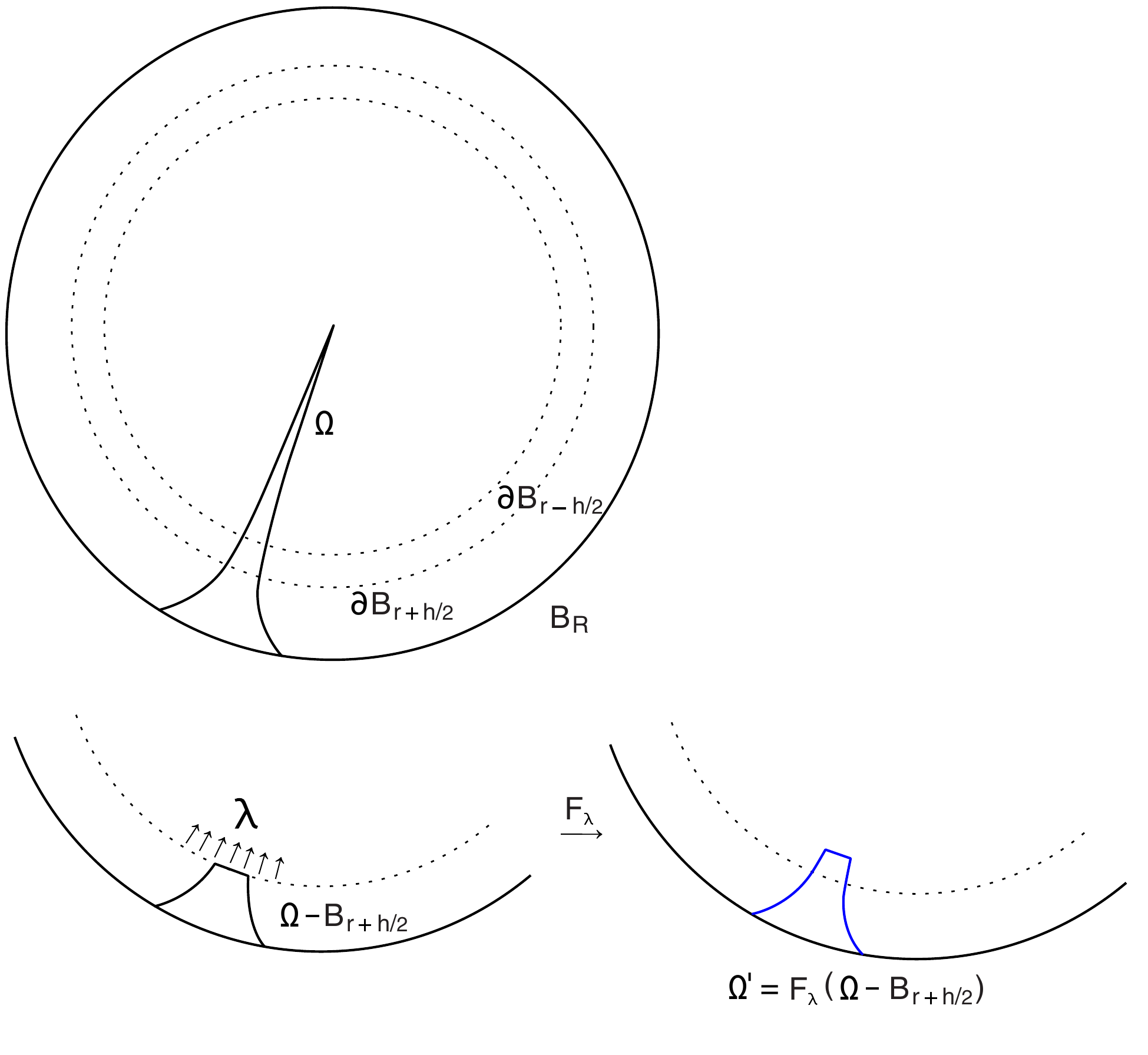}
\caption{The set $\Omega$ in $B_R$ and the deformation $F_{\lambda}$ acting on $\Omega - B_{r+h/2}$ as in the proof of Lemma~\ref{lem:moddensity}. \label{fig:deform}}
\end{figure}

In this case, we will use a deformation $F_{\lambda}$ (see
Figure~\ref{fig:deform}) that stretches the annulus
$A_{\frac{3}{4}R, R}$ into the annulus
$A_{(1 - \lambda)\frac{3}{4}R, R}$ in a radially symmetric way: For a
point $(r, \omega)$ in the spherical coordinate,
$r \in [\tfrac{3}{4} R, R]$ and $\omega \in \mathbb{S}^2$
\begin{equation}
  F_{\lambda}(r, \omega) = \bigl(f_{\lambda}(r), \omega\bigr) := \bigl(r - \lambda (R - r), \omega \bigr). 
\end{equation}
For the set after the deformation, we have the following result. It
states that the deformation increases the volume of the set, without
increasing too much the energy $\ms{E}_{\phi, R}$ of the set.
\begin{prop}\label{prop:deformboundary}
  Let $E \subset A_{\frac{3}{4} R, R}$ be a set of finite
  perimeter, we have for $\lambda \ll 1$
  \begin{align}
    \label{eq:deformbounday1}
    \abs{F_{\lambda}(E)} - \abs{E} & \gtrsim \lambda \abs{E}; \\
    \label{eq:deformbounday2}
    \bigl\lvert \partial F_{\lambda}(E) \cap B_R \bigr\rvert -
    \bigl\lvert \partial E \cap B_R \bigr\rvert
    & \lesssim \lambda \bigl(     \bigl\lvert \partial E \cap B_R \bigr\rvert - \abs{ E \cap \partial B_R}  + 4 R^{-1} \abs{E} \bigr); \\
    \label{eq:deformbounday3} \text{provided } \sup_{B_R} \abs{\nabla \phi} \leq \frac{1}{R}: \qquad \quad
    \int_{F_{\lambda}(E)} \phi - \int_E \phi & \lesssim \lambda \int_E(\phi + 1);
    \intertext{and moreover,}
    \label{eq:deformbounday4}
    \frac{1}{2} \iint_{F_{\lambda}(E) \times F_{\lambda}(E)}
    \frac{1}{\abs{x - y}} \ud x \ud y - \frac{1}{2} \iint_{E \times E}
    \frac{1}{\abs{x - y}} \ud x \ud y & \lesssim \lambda \frac{1}{2}
    \iint_{E \times E} \frac{1}{\abs{x - y}} \ud x \ud y.
  \end{align}
\end{prop}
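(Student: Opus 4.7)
The plan is to establish each of \eqref{eq:deformbounday1}--\eqref{eq:deformbounday4} by an explicit change-of-variables computation under the radial diffeomorphism $F_\lambda$. Since $F_\lambda(x) = (f_\lambda(r)/r)\,x$, the differential $DF_\lambda$ has eigenvalue $1+\lambda$ in the radial direction and two tangential eigenvalues equal to $f_\lambda(r)/r = 1 - \lambda(R-r)/r$. On the annulus $r \in [3R/4, R]$ one has $(R-r)/r \in [0, 1/3]$, so for $\lambda \ll 1$ the singular values of $DF_\lambda$ lie in $[1-\lambda/3,\, 1+\lambda]$ and $\det DF_\lambda = (1+\lambda)(f_\lambda(r)/r)^2 \in [1+c\lambda,\, 1+\lambda]$ for some universal $c > 0$. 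Integrating the pointwise lower bound $\det DF_\lambda - 1 \geq c\lambda$ over $E$ immediately yields \eqref{eq:deformbounday1}.

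For \eqref{eq:deformbounday3}, the change of variables gives
\begin{equation*}
\int_{F_\lambda(E)} \phi\,\ud y - \int_E \phi\,\ud x = \int_E (\phi \circ F_\lambda)(\det DF_\lambda - 1)\,\ud x + \int_E (\phi\circ F_\lambda - \phi)\,\ud x.
\end{equation*}
The first integrand is controlled by $\lambda\,(\phi \circ F_\lambda) \lesssim \lambda(\phi + 1)$ using $|\det DF_\lambda - 1| \lesssim \lambda$; for the second, $|F_\lambda(x) - x| = \lambda(R-r) \leq \lambda R/4$ combined with the hypothesis $\sup_{B_R}|\nabla\phi| \leq 1/R$ gives $|\phi\circ F_\lambda - \phi| \lesssim \lambda$, contributing at most $\lambda|E|$. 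For \eqref{eq:deformbounday4}, applying the change of variables twice rewrites the new Coulomb energy as $\tfrac{1}{2}\iint_{E\times E} \det DF_\lambda(x)\det DF_\lambda(y)\,|F_\lambda(x)-F_\lambda(y)|^{-1}\,\ud x\,\ud y$; the bi-Lipschitz lower bound $|F_\lambda(x)-F_\lambda(y)| \geq (1 - \lambda/3)|x-y|$ together with $\det DF_\lambda \leq 1+\lambda$ shows the Jacobian-weighted integrand is at most $(1+O(\lambda))|x-y|^{-1}$, and subtracting the baseline yields \eqref{eq:deformbounday4}.

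The perimeter estimate \eqref{eq:deformbounday2} will be handled via the two-dimensional area formula $|\partial F_\lambda(E) \cap B_R| = \int_{\partial E \cap B_R} J_2 F_\lambda\,\ud\mathcal{H}^2$, where a direct computation of the surface Jacobian on a reduced boundary with unit normal $\nu = \nu_r e_r + \nu_\tau e_\tau$ gives
\begin{equation*}
J_2 F_\lambda = \frac{f_\lambda(r)}{r}\,\sqrt{(1+\lambda)^2 \nu_\tau^2 + (f_\lambda(r)/r)^2 \nu_r^2}.
\end{equation*}
Since $(f_\lambda/r)^2 \leq (1+\lambda)^2$ the square root is bounded by $1+\lambda$, and since $f_\lambda/r \leq 1$ one obtains $J_2 F_\lambda \leq 1+\lambda$; integration then gives the bound $\lesssim \lambda\,|\partial E \cap B_R|$ for the perimeter strictly inside $B_R$. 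Because $F_\lambda$ fixes $\partial B_R$ pointwise, the trace satisfies $|F_\lambda(E) \cap \partial B_R| = |E \cap \partial B_R|$; under the convention that the perimeter $|\partial E \cap B_R|$ decomposes as interior part plus trace on $\partial B_R$, the left-hand side of \eqref{eq:deformbounday2} reduces to the interior perimeter difference just bounded, which matches the right-hand side, with the slack term $4R^{-1}|E|$ absorbing the quadratic-in-$\lambda$ remainders from the Jacobian expansion. The main bookkeeping obstacle will be tracking these conventions and propagating the sharp bounds on $J_2 F_\lambda$ uniformly over the annulus, since a naive bound loses the $|E \cap \partial B_R|$ cancellation that makes \eqref{eq:deformbounday2} effective in the later scaling arguments.
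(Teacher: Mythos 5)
Your treatment of \eqref{eq:deformbounday1}, \eqref{eq:deformbounday3}, and \eqref{eq:deformbounday4} is correct and follows essentially the same change-of-variables computations as the paper: the lower bound $JF_\lambda - 1 \gtrsim \lambda$ on the annulus gives the volume gain, the bounds $\abs{F_\lambda(x)-x} \lesssim \lambda R$ with $\sup\abs{\nabla\phi}\leq 1/R$ give the potential estimate, and the bi-Lipschitz estimate $\abs{F_\lambda(x)-F_\lambda(y)} \geq (1-C\lambda)\abs{x-y}$ gives the Coulomb estimate.

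There is, however, a genuine gap in your proof of \eqref{eq:deformbounday2}. You correctly write down the surface Jacobian $J_2 F_\lambda = \tfrac{f_\lambda(r)}{r}\sqrt{(1+\lambda)^2\nu_\tau^2 + (f_\lambda(r)/r)^2\nu_r^2}$, but then you only extract the uniform bound $J_2 F_\lambda \leq 1+\lambda$, which yields $\abs{\partial F_\lambda(E)\cap B_R} - \abs{\partial E\cap B_R} \lesssim \lambda\abs{\partial E\cap B_R}$ — an estimate \emph{without} the crucial negative term $-\abs{E\cap\partial B_R}$. Your subsequent attempt to recover that term by invoking trace conventions on $\partial B_R$ and treating $4R^{-1}\abs{E}$ as slack for ``quadratic-in-$\lambda$ remainders'' is incorrect: $\abs{\partial E\cap B_R}$ in the statement is the perimeter in the open ball (not including the trace on $\partial B_R$), so the trace plays no role in bounding the left-hand side, and the term $4R^{-1}\abs{E}$ has nothing to do with higher-order terms in $\lambda$. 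The stated inequality is genuinely stronger than what you proved, and the difference matters precisely because (as the paper remarks, and as Lemma~\ref{lem:energyapriori} shows) one only controls $\mc{E}_{\phi,R}(\Omega) - \abs{\Omega\cap\partial B_R}$, not $\mc{E}_{\phi,R}(\Omega)$ by itself.

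The missing idea has two parts. First, one must retain the anisotropy of $J_2 F_\lambda$: since $(f_\lambda(r)/r)^2 \leq 1$, the surface Jacobian satisfies $J_2F_\lambda \leq 1$ wherever $\nu_\tau = 0$, and more generally $J_2F_\lambda - 1 \lesssim \lambda(1-\abs{\nu_{E,r}})$; the deformation only stretches the parts of $\partial^* E$ that are \emph{not} tangent to the concentric spheres. Integrating gives $\abs{\partial F_\lambda(E)\cap B_R} - \abs{\partial E\cap B_R} \lesssim \lambda\bigl(\abs{\partial E\cap B_R} - \int_{\partial^* E\cap B_R}\abs{\nu_{E,r}}\ud\mc{H}^2\bigr)$. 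Second, one must convert the flux integral into the trace term via the divergence theorem applied to $T = x/\abs{x}$ on $E\subset A_{3R/4,R}$:
\begin{equation*}
\int_{\partial^* E\cap B_R}\abs{\nu_{E,r}}\ud\mc{H}^2 \geq \Bigl\lvert\int_{\partial^* E\cap B_R} T\cdot\nu_E\ud\mc{H}^2\Bigr\rvert \geq \abs{E\cap\partial B_R} - \int_E \frac{2}{\abs{x}} \geq \abs{E\cap\partial B_R} - \frac{4}{R}\abs{E},
\end{equation*}
which is exactly where $4R^{-1}\abs{E}$ enters. Without these two steps \eqref{eq:deformbounday2} is not established in the form needed.
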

\begin{remark}
  The appearance of the good term $-\abs{E \cap \partial B_R}$ on the
  r.h.s.{} of \eqref{eq:deformbounday2}, together with the a priori
  estimate in Lemma~\ref{lem:energyapriori} below, will be crucial in
  the proof of Lemma~\ref{lem:moddensity}.
\end{remark}

\begin{proof}
  We clearly have 
  \begin{align}
    \label{eq:F} & \abs{F_{\lambda}(x) - x} \lesssim \lambda R,  \\
    \label{eq:nablaF} & \abs{\nabla F_{\lambda}(x) - \mathrm{Id}} \lesssim \lambda, \quad \text{and thus} \\
    \label{eq:JF} & \abs{J F_{\lambda} - 1} \lesssim \lambda,
  \end{align}
  where $J F_{\lambda}$ denotes the Jacobian of $F_{\lambda}$. Using
  the explicit form of $F_{\lambda}$, we see
  \begin{equation*}
    JF_{\lambda}(x) = (1 + \lambda) \Bigl( \frac{f_{\lambda}(\abs{x})}{\abs{x}}\Bigr)^2 \stackrel{\abs{x} \geq \frac{3}{4}R}{\geq} ( 1 + \lambda)^2 \bigl( 1 - \frac{1}{3} \lambda\bigr)^2
    = 1 + \frac{1}{3}\lambda + \Or(\lambda^2), 
  \end{equation*}
  so that 
  \begin{equation}\label{eq:JFlower}
    JF_{\lambda} - 1 \gtrsim \lambda. 
  \end{equation}

  The estimates \eqref{eq:deformbounday1}, \eqref{eq:deformbounday3},
  and \eqref{eq:deformbounday4} follow from the above properties of
  $F$, as we shall explain now. For the volume, we have
  \begin{equation*}
    \abs{F_{\lambda}(E)} - \abs{E} = \int_E (JF_{\lambda}(x) - 1) \ud x 
    \stackrel{\eqref{eq:JFlower}}{\gtrsim} \lambda \abs{E}. 
  \end{equation*}
  % \begin{equation*}
  %   \begin{aligned}
  %     \abs{F_{\lambda}(E)} - \abs{E} & = \int_E (JF_{\lambda}(x) - 1)
  %     \ud x \\
  %     & = \int_E \Bigl[ (1 + \lambda) \bigl( (1 + \lambda) - \lambda R
  %     / \abs{x} \bigr)^2 - 1 \Bigr] \ud x \\
  %     & \geq \int_E \Bigl[ (1 + \lambda) \bigl( (1 + \lambda) -
  %     \lambda R / R' \bigr)^2 - 1 \Bigr]\ud x \\
  %     & \geq \int_E \Bigl[ (1 + \lambda) \bigl( 1 + \lambda / 2
  %     \bigr)^2 - 1 \Bigr]\ud x \geq 2 \lambda \abs{E}.
  %   \end{aligned}
  % \end{equation*}
  % Here to get the second equality, we have used the explicit form of
  % the Jacobian.
  Next for the potential term, we calculate
  \begin{equation*}
    \begin{aligned}
      \int_{F_{\lambda}(E)} \phi - \int_E \phi & = \int_E
      \phi(F_{\lambda}(x))
      J F_{\lambda}(x) - \phi(x) \ud x \\
      & = \int_E \bigl( \phi(F_{\lambda}(x)) - \phi(x) \bigr) J
      F_{\lambda}(x) + \phi(x) ( JF_{\lambda}(x) - 1) \ud x \\
      & \leq \int_E \sup \abs{\nabla \phi} \bigl\lvert F_{\lambda}(x)
      - x\bigr\rvert \abs{J
        F_{\lambda}(x)} + \phi(x) \bigl\lvert JF_{\lambda}(x) - 1\bigr\rvert  \ud x  \\
      &\hspace{-1.3em} \stackrel{(\ref{eq:F}, \ref{eq:nablaF},
        \ref{eq:JF})}{\lesssim} \lambda \int_E \Bigl(R \sup
      \abs{\nabla \phi}
      + \phi \Bigr) \\
      & \lesssim \lambda \int_E \bigl( 1 + \phi\bigr), 
    \end{aligned}
  \end{equation*}
  where the last inequality uses $\sup \abs{\nabla \phi} \leq 1/R$.
  For the Coulomb repulsion term, we have
  \begin{equation}\label{eq:coulombest1}
    \frac{1}{2} \iint_{F_{\lambda}(E) \times F_{\lambda}(E)} \frac{1}{\abs{x - y}} \ud x \ud y = \frac{1}{2} \iint_{E \times E} \frac{1}{\abs{ F_{\lambda}(x) - F_{\lambda}(y)}} JF_{\lambda}(x) JF_{\lambda}(y) \ud x \ud y. 
  \end{equation}
  Note that by mean value theorem, 
  \begin{equation*}
    \abs{x - y} \leq \sup \abs{\nabla F_{\lambda}^{-1}}
    \abs{F_{\lambda}(x) - F_{\lambda}(y)} 
    \stackrel{\eqref{eq:nablaF}}{\leq} (1 + C \lambda) \abs{F_{\lambda}(x) - F_{\lambda}(y)}
  \end{equation*}
  for some universal constant $C$. Hence substituting this into
  \eqref{eq:coulombest1} and using \eqref{eq:JF}, we obtain
  \begin{equation*}
    \frac{1}{2} \iint_{F_{\lambda}(E) \times F_{\lambda}(E)} \frac{1}{\abs{x - y}} \ud x \ud y - \frac{1}{2} \iint_{E \times E} \frac{1}{\abs{x - y}} \ud x \ud y
    \lesssim \lambda \frac{1}{2} \iint_{E \times E}  \frac{1}{\abs{x - y}} \ud x \ud y. 
  \end{equation*}

  Finally, we consider the estimate for the perimeter, which is more
  subtle. Recall that (see e.g.,~\cite{Maggi:12}*{Proposition 17.1})
  if $E$ is a set of finite perimeter and $G: \RR^n \to \RR^n$ is a
  diffeomorphism, then $G(E)$ is still a set of finite perimeter, with
  (in classical analysis, this is the area formula)
  \begin{equation*}
    \nu_{G(E)} \mc{H}^{n-1} \mres \partial^{\ast} G(E) 
    = (G)_{\#} \bigl[ JG (\nabla G^{-1} \circ G)^{\ast} \nu_{E} \mc{H}^{n-1} \mres \partial^{\ast} E \bigr].
  \end{equation*}
  In particular, applying the above formula to $G = F_{\lambda}$, we
  get
  \begin{equation*}
    \begin{aligned}
      \bigl\lvert \partial F_{\lambda}(E) \cap B_R \bigr\rvert & =
      \int_{\partial^{\ast} E \cap B_R} \Biggl\lvert \Biggl( (1 +
      \lambda) \frac{f_{\lambda}(\abs{x})}{\abs{x}} \nu_E',
      \Bigl(\frac{f_{\lambda}(\abs{x})}{\abs{x}}\Bigr)^2 \nu_{E, r}
      \Biggr) \Biggr\rvert \ud \mc{H}^2, \\
      & \leq \int_{\partial^{\ast} E \cap B_R} \Bigl\lvert
      \Bigl( (1 + \lambda) \nu_E', \nu_{E, r}\Bigr) \Bigr\rvert \ud \mc{H}^2,
    \end{aligned}
  \end{equation*}
  where we write $\nu_E = (\nu_E', \nu_{E, r})$ with $\nu_{E, r}$
  parallel to the radial direction $\hat{x} = \frac{x}{\abs{x}}$ and
  $\nu_E' \perp \hat{x}$ and we have used in the inequality for $r
  \leq R$,
  \begin{equation*}
    \frac{f_{\lambda}(r)}{r} = 1 - \lambda \frac{R - r}{r} \leq 1. 
  \end{equation*}
  Using $\abs{\nu_E} = 1$ on $\partial^{\ast} E$, we have further 
  \begin{equation*}
    \begin{aligned}
      \bigl\lvert \partial F_{\lambda}(E) \cap B_R \bigr\rvert & \leq
      \int_{\partial^{\ast} E \cap B_R} \Bigl( (1 + \lambda)^2 ( 1 -
      \nu_{E, r}^2) + \nu_{E, r}^2 \Bigr)^{1/2} \ud \mc{H}^2 \\
      & \leq \int_{\partial^{\ast} E \cap B_R} \Bigl( 1 + 3 \lambda ( 1 -
      \nu_{E, r}^2) \Bigr)^{1/2} \ud \mc{H}^2 \\
      & \leq \int_{\partial^{\ast} E \cap B_R} \Bigl( 1 + 3
      \lambda ( 1 - \abs{\nu_{E, r}}) \Bigr)^{1/2} 
      \ud \mc{H}^2. 
    \end{aligned}
  \end{equation*}
  Subtracting $\abs{\partial E \cap B_R}$, we arrive at 
  \begin{equation*}
    \begin{aligned}
    \bigl\lvert \partial F_{\lambda}(E) \cap B_R \bigr\rvert -
      \bigl\lvert \partial E \cap B_R \bigr\rvert
& \leq \int_{\partial^{\ast} E \cap B_R} \Biggl[ \Bigl( 1 + 3
      \lambda ( 1 - \abs{\nu_{E, r}}) \Bigr)^{1/2} - 1\Biggr]
      \ud \mc{H}^2 \\
      & \lesssim \lambda \int_{\partial^{\ast} E \cap B_R} 1 -
      \abs{\nu_{E, r}} \ud \mc{H}^2 = \lambda \left(
        \bigl\lvert \partial E \cap B_R \bigr\rvert -
        \int_{\partial^{\ast} E \cap B_R} \abs{\nu_{E, r}} \ud
        \mc{H}^2 \right).
      \end{aligned}
  \end{equation*}
  In order to relate to $\abs{E \cap \partial B_R}$, recall the
  divergence theorem
  \begin{equation*}
    \int_{E} \divop T(x) \ud x = \int_{\partial^{\ast} E \cap B_R} T \cdot \nu_E \ud \mc{H}^2 - \int_{E \cap \partial B_R} T \cdot \hat{x} \ud \mc{H}^2. 
  \end{equation*}
  Applying this to $T = \hat{x} = \frac{x}{\abs{x}}$, using that
  $E\subset A_{\frac{3}{4} R, R}$ stays away from the singularity at
  $x = 0$, we get 
  \begin{equation*}
    \begin{aligned}
    \int_{\partial^{\ast} E \cap B_R} \abs{\nu_{E, r}} \ud
    \mc{H}^2 & \geq \left \lvert \int_{\partial^{\ast} E \cap B_R} T \cdot \nu_E \ud \mc{H}^2\right\rvert \\
    & \geq \abs{E \cap \partial B_R} -  \int_{E \cap B_R} \frac{2}{\abs{x}} \ud x \geq \abs{E \cap \partial B_R} - \frac{4}{R} \abs{E}.
    \end{aligned}
  \end{equation*}
  Substituting this into the previous inequality, we obtain the desired
  estimate for the perimeter. 
  This concludes the proof of the proposition.
\end{proof}

To apply Proposition~\ref{prop:deformboundary}, the following a priori
estimate will be useful. 
\begin{lemma}\label{lem:energyapriori}
  Under the same assumption as in Lemma~\ref{lem:moddensity}, we have 
  \begin{equation}\label{eq:energyapriori}
    \mc{E}_{\phi, R}(\Omega) - \abs{\Omega \cap \partial B_R} \lesssim 1.
  \end{equation}
\end{lemma}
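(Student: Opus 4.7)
The plan is to apply the quasi-minimality relation from Lemma~\ref{lem:moddensity} to the one-parameter family of competitors $\Omega_r := \Omega \setminus \overline{B_r}$ for $r < R$, and to pass to the limit $r \uparrow R$. This competitor is natural since it trivially satisfies $|\Omega_r| \leq |\Omega|$ and $\Omega_r \triangle \Omega = \Omega \cap \overline{B_r} \subset\subset B_R$, and it is designed to exchange the piece $|\partial \Omega \cap B_r|$ of the perimeter for $|\Omega \cap \partial B_r|$, thereby producing the desired surplus on the right-hand side.

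First, I would expand the energy difference $\mc{E}_{\phi,R}(\Omega) - \mc{E}_{\phi,R}(\Omega_r)$ piece by piece. Using that for a.e.~$r$ one has $|\partial \Omega_r \cap B_R| = |\partial \Omega \cap (B_R \setminus \overline{B_r})| + |\Omega \cap \partial B_r|$, the perimeter contribution equals $|\partial \Omega \cap B_r| - |\Omega \cap \partial B_r|$; the potential contribution equals $\int_{B_r} \phi \chi_\Omega \geq 0$; and the Coulomb contribution equals $\frac{1}{R^3}\bigl[\tfrac{1}{2}\mathrm{Coul}(\Omega \cap B_r, \Omega \cap B_r) + \mathrm{Coul}(\Omega \cap B_r, \Omega \setminus B_r)\bigr] \geq 0$ (from the decomposition $\chi_\Omega = \chi_{\Omega \cap \overline{B_r}} + \chi_{\Omega_r}$). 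Combining with the quasi-minimality inequality in Lemma~\ref{lem:moddensity},
\begin{equation*}
  |\partial \Omega \cap B_r| + \int_{B_r}\! \phi \chi_\Omega + \tfrac{1}{R^3}\bigl[\tfrac{1}{2}\mathrm{Coul}(\Omega \cap B_r, \Omega \cap B_r) + \mathrm{Coul}(\Omega \cap B_r, \Omega \setminus B_r)\bigr] \leq |\Omega \cap \partial B_r| + R^2 E_0\bigl(R^{-3}|\Omega \cap B_r|\bigr).
\end{equation*}
Since $|\Omega \cap B_r| \leq |\Omega \cap B_R| = 1$, the bound $E_0(V) \lesssim V^{2/3} + V$ from \eqref{eq:E0bound} gives $R^2 E_0(R^{-3}|\Omega \cap B_r|) \lesssim 1 + R^{-1} \lesssim 1$.

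Next I would take $r \uparrow R$ along a.e.~$r$. By monotone convergence for the perimeter and potential measures and absolute continuity of the Coulomb self-energy (the Coulomb piece of $\Omega_r$ vanishes as $|\Omega_r \cap B_R| = |\Omega \cap A_{r,R}| \to 0$), the whole bracket on the left-hand side converges to $\mc{E}_{\phi,R}(\Omega)$. Passing to the $\liminf$ then yields
\begin{equation*}
  \mc{E}_{\phi,R}(\Omega) \leq \liminf_{r \uparrow R} |\Omega \cap \partial B_r| + O(1) \leq |\Omega \cap \partial B_R| + O(1),
\end{equation*}
which is the desired estimate \eqref{eq:energyapriori}.

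The only delicate point is the convergence $|\Omega \cap \partial B_r| \to |\Omega \cap \partial B_R|$ in the limit $r \uparrow R$: the slice mass $r \mapsto |\Omega \cap \partial B_r|$ is a priori only defined a.e.~and is $L^1$-integrable with integral equal to $|\Omega \cap A_{0,R}| = 1$. This is handled by appealing to the slicing/trace theory for sets of finite perimeter---the inner trace of $\chi_\Omega$ on $\partial B_R$ is exactly the quantity $|\Omega \cap \partial B_R|$ used in the divergence theorem in the proof of Proposition~\ref{prop:deformboundary}, and the slices $|\Omega \cap \partial B_r|$ converge to it as $r \uparrow R$ along a.e.~sequence (or, equivalently, upper semicontinuity of slicing suffices to take the $\liminf$). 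This is the main obstacle, though it is routine given the standard theory.
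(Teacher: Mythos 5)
Your proof is correct and follows essentially the same strategy as the paper: the paper's "coated" competitor $\Omega' = \Omega - B_R$ with a thin layer near $\partial B_R$ is precisely your family $\Omega_r = \Omega \setminus \overline{B_r}$ for $r$ close to $R$, and the conclusion via $R^2 E_0(R^{-3}|\Omega\cap B_R|)\lesssim 1$ is the same. Your write-up is more explicit about the term-by-term decomposition of $\mc{E}_{\phi,R}(\Omega)-\mc{E}_{\phi,R}(\Omega_r)$ and the slicing/trace limit $r\uparrow R$, which the paper leaves implicit.
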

\begin{proof}
  The idea is to compare $\Omega$ to the set $\Omega' = \Omega - B_R$,
  which however is not quite possible since it changes $\Omega$ near the
  boundary of $B_R$. Thus, we ``coat'' $\Omega \cap \partial B_R$ with a very
  thin layer and put the excessive volume at infinity, the optimality
  of $\Omega$ together with Lemma~\ref{lem:Eupperbound} then give us 
  \begin{equation*}
    \mc{E}_{\phi, R}(\Omega) - \abs{\Omega \cap \partial B_R} \leq  R^2 E_0(R^{-3} \abs{\Omega \cap B_R}) =  R^2 E_0(R^{-3}) \lesssim 1. 
  \end{equation*}
\end{proof}

We are now ready to prove Lemma~\ref{lem:moddensity}. The idea is to
compare the minimizer with a competitor by taking away the miminizer
in the annulus $A_{r-h/2, r+h/2}$ for
$r \in [\frac{3}{4} R, \frac{7}{8}R]$ and $h \in (0, 1]$. The volume
lost by cutting $V(h) = \abs{\Omega \cap A_{r-h/2, r+h/2}}$ is
compensated by deforming $\Omega - B_{r +h/2}$ as in
Proposition~\ref{prop:deformboundary}. The comparison then leads to a
differential inequality for $V(h)$, which gives a universal lower
bound of the volume $\abs{\Omega \cap A_{r-1/2, r+1/2}}$ for each
$r \in [\frac{3}{4} R, \frac{7}{8}R]$, and hence an upper bound of the
radius $R$.

\begin{proof}[Proof of Lemma~\ref{lem:moddensity} in the case
  $\abs{\Omega \cap B_{R_0}} \leq 1/2$]

  We will show that 
  \begin{equation}\label{eq:annulusbound}
    \abs{\Omega \cap A_{r - 1/2, r + 1/2}} \gtrsim 1, \qquad \text{for } r \in \bigl[\frac{3}{4}R, \frac{7}{8}R \bigr].
  \end{equation}
  This in turn gives an upper bound on $R$ as
  \begin{equation*}
    \frac{R}{8} \lesssim \int_{\frac{3}{4}R}^{\frac{7}{8}R} \abs{\Omega \cap A_{r - 1/2, r + 1/2}}
    \ud r \leq \abs{\Omega \cap B_R} = 1. 
  \end{equation*}
  Therefore, it suffices to show \eqref{eq:annulusbound}. 

  Fixing $r \in [\tfrac{3}{4} R, \tfrac{7}{8} R]$ and taking
  $h \in (0, 1]$, we denote
  \begin{equation*}
    V(h) = \bigl\lvert \Omega \cap A_{r - h/2, r + h/2} \bigr\rvert. 
  \end{equation*}
  We now compare the set $\Omega$ with $\Omega'$ given
  by (see Figure~\ref{fig:deform})
  \begin{equation*}
    \Omega' = F_{\lambda}(\Omega - B_{r + h/2})
  \end{equation*}
  for $\lambda$ chosen such that
  \begin{equation*}
    \abs{F_{\lambda}(\Omega - B_{r+h/2})} - \abs{\Omega - B_{r+h/2}} = 
    \abs{\Omega \cap A_{r-h/2, r+h/2}} = V(h),  
  \end{equation*}
  so that the volume of the competitor is the same as of $\Omega - B_{r - h/2}$.
  Since we may without loss of generality assume that 
  \begin{equation}\label{eq:smallVh}
    V(h) \ll 1,
  \end{equation}
  and since $\abs{\Omega - B_{r+h/2}} \geq \abs{\Omega \cap A_{R_0,
      R}} \geq 1/2$, this is possible by
  Proposition~\ref{prop:deformboundary} and we have
  \begin{equation}\label{eq:lambdabound}
    \lambda \lesssim V(h). 
  \end{equation}

  Using again Proposition~\ref{prop:deformboundary}, we thus obtain 
  \begin{align*}
    & \abs{ \partial \Omega' \cap B_R} - \abs{\partial (\Omega -
      B_{r+h/2}) \cap B_R} \lesssim \lambda \bigl( \abs{\partial
      (\Omega - B_{r+h/2}) \cap B_R} - \abs{\Omega \cap \partial B_R}
    + 4 R^{-1} \bigr) \\
    & \phantom{ \abs{\partial \Omega' \cap B_R} - \abs{\partial
        (\Omega - B_{r+h/2}) \cap B_R}} \lesssim \lambda \bigl(
    \abs{\partial \Omega \cap  B_R} - \abs{\Omega \cap \partial B_R} +
    \abs{ \Omega \cap \partial B_{r + h/2} } + 4 R^{-1}\bigr); \notag \\
    & \int_{\Omega'} \phi - \int_{\Omega - B_{r+h/2}} \phi \lesssim
    \lambda \int_{\Omega} ( \phi + 1); \\
    & \frac{1}{2} \iint_{\Omega' \times \Omega'} \frac{1}{\abs{x -
        y}}\ud x\ud y - \frac{1}{2} \iint_{(\Omega - B_{r+h/2}) \times
      (\Omega - B_{r+h/2})} \frac{1}{\abs{x - y}} \ud x\ud y\lesssim
    \lambda \frac{1}{2} \iint_{\Omega \times \Omega} \frac{1}{\abs{x -
        y}} \ud x\ud y.
  \end{align*}
  Adding up, we obtain
  \begin{equation}\label{eq:est1}
    \mc{E}_{\phi, R}\bigl(\Omega'\bigr) - \mc{E}_{\phi, R}\bigl( \Omega - B_{r
      + h/2} \bigr) \lesssim \lambda \bigl( 1 + \mc{E}_{\phi,
      R}(\Omega) - \abs{\Omega \cap \partial B_R} + \abs{\Omega
      \cap \partial B_{r+h/2}}\bigr) \lesssim \lambda \bigl( 1 +
    \abs{\Omega \cap \partial B_{r+h/2}}\bigr)
  \end{equation}
  where in the second inequality, we have used $\mc{E}_{\phi, R}(\Omega)
  - \abs{\Omega \cap \partial B_R} \lesssim 1$ from
  Lemma~\ref{lem:energyapriori}. By the optimality of $\Omega$, we have 
  \begin{equation}\label{eq:est2}
    \mc{E}_{\phi, R}(\Omega) \leq \mc{E}_{\phi, R}(\Omega') + R^{2} E_0\bigl(R^{-3} (\abs{\Omega} - \abs{\Omega'})\bigr) = 
    \mc{E}_{\phi, R}(\Omega') + R^{2} E_0\bigl(R^{-3} \abs{\Omega \cap B_{r - h/2}}\bigr). 
  \end{equation}
  Furthermore, by the definition of $\mc{E}_{\phi, R}$ and since $\phi
  > 0$ in $B_R$, we get 
  \begin{multline}\label{eq:est3}
    \mc{E}_{\phi, R}(\Omega) \geq  \mc{E}_{\phi, R}(\Omega - B_{r+h/2}) + 
    \mc{E}_{\phi, R}(\Omega \cap B_{r-h/2}) \\
    + \abs{\partial \Omega \cap A_{r - h/2, r + h/2}} - \abs{\Omega \cap \partial B_{r-h/2}}  - \abs{\Omega \cap \partial B_{r+h/2}}, 
  \end{multline}
  and also 
  \begin{equation}\label{eq:est4}
    R^{2} E_0\bigl(R^{-3} \abs{\Omega \cap B_{r-h/2}}\bigr) \leq \mc{E}_{\phi, R}(\Omega \cap B_{r - h/2}).
  \end{equation}
  Combining together the above four inequalities
  \eqref{eq:est1}--\eqref{eq:est4}, we  arrive at
  \begin{equation*}
    \abs{\partial \Omega \cap A_{r - h/2, r + h/2}} \leq \abs{\Omega \cap \partial B_{r-h/2}}  + \abs{\Omega \cap \partial B_{r+h/2}} 
    + C \lambda \bigl( 1 + \abs{\Omega
      \cap \partial B_{r+h/2}}\bigr).
  \end{equation*}
  Therefore, 
  \begin{equation*}
    \begin{aligned}
      \abs{\partial (\Omega \cap A_{r - h/2, r + h/2})} & \leq \abs{\partial \Omega \cap A_{r - h/2, r + h/2}} + \abs{\Omega \cap \partial B_{r-h/2}}  + \abs{\Omega \cap \partial B_{r+h/2}} \\
      & \lesssim \abs{\Omega \cap \partial B_{r-h/2}} +
      \abs{\Omega \cap \partial B_{r+h/2}} + \lambda.
    \end{aligned}
  \end{equation*}
  Using the isoperimetric inequality, we have
  \begin{equation*}
    \abs{\partial (\Omega \cap A_{r - h/2, r + h/2})} \gtrsim \abs{\Omega \cap A_{r - h/2, r + h/2}}^{2/3} =  V(h)^{2/3}. 
  \end{equation*}
  Hence, combined with \eqref{eq:lambdabound}, we get 
  \begin{equation*}
    V'(h) = \abs{\Omega \cap \partial B_{r-h/2}} +
    \abs{\Omega \cap \partial B_{r+h/2}} \geq \frac{1}{C_0} V(h)^{2/3} - C_0 V(h)
    \stackrel{\eqref{eq:smallVh}}{\gtrsim} V(h)^{2/3}.
  \end{equation*}
  Thus for $V(h) \lesssim 1$ the above inequality yields
  \begin{equation*}
    V'(h) \gtrsim  V(h)^{2/3}, \quad \text{or equivalently} \quad 
    \bigl(V(h)^{1/3}\bigr)' \gtrsim 1,
  \end{equation*}
  where we have used that $V(h) > 0$ for any $h > 0$, as otherwise,
  $\Omega \cap B_r$ (which is non-empty as $0 \in \Omega$) is
  disconnected with the rest of the set, which is impossible as the
  optimal $\Omega$ has to be connected inside $B_R$ by a similar
  argument as in the proof of Lemma~\ref{lem:connect}.  The last
  inequality implies the desired \eqref{eq:annulusbound} by
  integration. 
\end{proof}

\subsection{Proof of Lemma~\ref{lem:moddensity}: Case $\abs{\Omega  \cap B_{R_0}} \geq 1/2$}
\label{sec:interior}

After a suitable dilation by a factor of most two such that the volume
$\Omega \cap B_{R_0}$ expands to $1$, it suffices to prove
Lemma~\ref{lem:moddensity} under the additional assumption that
$\abs{\Omega \cap \partial B_R} \lesssim 1$.

Let $\Omega$ be the optimal set, using Lemma~\ref{lem:energyapriori}
and $\abs{\Omega \cap \partial B_R} \lesssim 1$, we have
\begin{equation}
  \mc{E}_{\phi, R}(\Omega) \leq  \bigl( \mc{E}_{\phi, R}(\Omega) - \abs{\Omega \cap \partial B_R} \bigr) + \abs{\Omega \cap \partial B_R} \lesssim 1,
\end{equation}
which, using the definition of $\mc{E}_{\phi, R}$ we upgrade to 
\begin{equation}\label{eq:wholeperim}
  \abs{\partial(\Omega \cap B_R)} \leq \abs{\partial \Omega
    \cap B_R} + \abs{\Omega \cap \partial B_R} \leq \mc{E}_{\phi, R}(\Omega) + \abs{\Omega \cap \partial B_R} \lesssim 1.
\end{equation}
Let us now show that we can find a ball of radius $1$ inside $B_R$
such that the optimal set has a non-trivial amount of volume inside
the ball, as stated in the following lemma.
\begin{lemma}\label{lem:ballexist}
  There exists a ball $B \subset B_R$ of radius $1$ such that
  \begin{equation*}
    \abs{ \Omega \cap B } \gtrsim 1, \quad 
    \abs{ B - \Omega} \gtrsim 1, \quad \text{and} \quad 
    \abs{ \partial \Omega \cap B} \lesssim 1.
  \end{equation*}
\end{lemma}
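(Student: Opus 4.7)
My plan is to reduce the lemma to producing a single unit ball $B \subset B_R$ with $\abs{\Omega \cap B} \gtrsim 1$, after which the other two conclusions come for free. Indeed, $\abs{\partial \Omega \cap B} \leq \abs{\partial \Omega \cap B_R} \lesssim 1$ is immediate from \eqref{eq:wholeperim}, and since $\abs{\Omega \cap B_R} = 1 < \abs{B_1}$, one automatically has $\abs{B - \Omega} = \abs{B_1} - \abs{\Omega \cap B} \geq \abs{B_1} - 1 \gtrsim 1$. So the entire content of the lemma is in finding a unit ball, contained in $B_R$, that carries a universal amount of $\Omega$-mass.

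To produce such a ball I would combine a Besicovitch covering with the relative isoperimetric inequality. Suppose, for contradiction, that $\abs{\Omega \cap B_1(x)} < c$ for every $x$ in the relevant subset of $\Omega$, where $c \ll \abs{B_1}/2$ is a small universal constant to be fixed. Extract from $\{B_1(x)\}_{x \in \Omega \cap B_R}$ a covering subcollection $\{B_1(x_i)\}_i$ with bounded overlap. The relative isoperimetric inequality in each $B_1(x_i)$ (with the smaller side being $\abs{\Omega \cap B_1(x_i)} < c < \abs{B_1}/2$) gives
\[
\abs{\Omega \cap B_1(x_i)}^{2/3} \lesssim \abs{\partial^*(\Omega \cap B_R) \cap B_1(x_i)}.
\]
Summing over $i$ and using bounded overlap together with \eqref{eq:wholeperim},
\[
\sum_i \abs{\Omega \cap B_1(x_i)}^{2/3} \lesssim \abs{\partial(\Omega \cap B_R)} \lesssim 1.
\]
On the other hand, $1 = \abs{\Omega \cap B_R} \leq \sum_i \abs{\Omega \cap B_1(x_i)} \leq c^{1/3} \sum_i \abs{\Omega \cap B_1(x_i)}^{2/3} \lesssim c^{1/3}$, which forces $c \gtrsim 1$, a contradiction for $c$ chosen small enough. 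Hence there exists $x_0 \in \Omega \cap B_R$ with $\abs{\Omega \cap B_1(x_0)} \gtrsim 1$.

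The main obstacle is to guarantee that the resulting $x_0$ lies in $B_{R-1}$, so that $B = B_1(x_0)$ is genuinely contained in $B_R$. I would handle this by running the covering argument on $\Omega \cap B_{R'}$ rather than $\Omega \cap B_R$, where $R' \in [R-2, R-1]$ is selected through the averaging identity $\int_0^R \abs{\Omega \cap \partial B_r} \ud r = \abs{\Omega \cap B_R} = 1$ to ensure $\abs{\Omega \cap \partial B_{R'}} \lesssim 1$, which in turn keeps $\abs{\partial(\Omega \cap B_{R'})} \lesssim 1$. A preliminary check that $\abs{\Omega \cap B_{R'}} \gtrsim 1$ — equivalently, that the mass of $\Omega$ is not entirely concentrated in a thin shell near $\partial B_R$ — follows either directly from the Case~2 hypothesis (tracking how the dilation redistributes the mass) or from observing that a set of volume $1$ and perimeter $\lesssim 1$ which is concentrated in a shell of thickness $\lesssim 1$ is automatically of bounded diameter, so that a unit ball strictly inside $B_R$ already captures a substantial portion of its mass. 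Once $R'$ is fixed, the covering argument above places $x_0$ inside $B_{R'} \subset B_{R-1}$, and $B = B_1(x_0)$ is the required ball.
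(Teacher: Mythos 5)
Your covering-plus-relative-isoperimetric argument is a genuine alternative to the paper's proof, which instead mollifies $\chi_{\Omega\cap B_R}$ at a small universal scale $\delta$, uses the BV bound \eqref{eq:wholeperim} to make the mollification error in $L^1(B_R)$ less than $1/2$, and reads off a point where the mollified density exceeds $1/2$. Both are Lebesgue--point/concentration statements driven entirely by $\abs{\partial(\Omega\cap B_R)}\lesssim 1$; your Besicovitch-plus-relative-isoperimetric version is a standard alternative route to the same concentration, and your opening reduction (the second and third conclusions are automatic) is exactly the paper's. The core of your second paragraph is correct and would in fact go through with the balls $B_1(x_i)$ centered at points of $\Omega\cap B_R$ without any restriction to $B_{R'}$.

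The real issue is the ``preliminary check'' that $\abs{\Omega\cap B_{R'}}\gtrsim 1$, and here there is a gap: that inequality can genuinely fail. Nothing in the setup of Lemma~\ref{lem:moddensity} prevents $\Omega\cap B_R$ from being a thin tube from the origin (of negligible volume, because the perimeter budget $\lesssim 1$ forces its cross-section to shrink like $1/R$) attached to a unit-volume blob pressed right against $\partial B_R$; in that situation $\abs{\Omega\cap B_{R-1}}$ is as small as one pleases and your covering of $\Omega\cap B_{R'}$ has essentially nothing to cover. Neither of your two patches resolves this. The dilation in Case~2 is calibrated precisely so as to renormalize the Case~2 inequality away, and gives no lower bound on $\abs{\Omega\cap B_{R-2}}$ afterwards. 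The ``bounded diameter'' heuristic points in the right direction but is not automatic: a unit ball inscribed in $B_R$ and tangent to $\partial B_R$ at a point $p$ has strictly larger curvature than $\partial B_R$ and hence does \emph{not} contain a full small neighbourhood of $p$, so one must actually estimate how much of the near-boundary mass such a ball captures (the excluded volume is of lower order in the relevant scale, but that must be shown). To be fair, the paper's own proof elides exactly the same point when it asserts, with no comment on containment, that the mollification point $x\in B_R$ yields $B_1(x)\subset B_R$; the mollification approach makes the fix a bit lighter because the mass it locates lies inside a ball of diameter $2\delta\ll 1$, for which the excluded volume near a boundary tangency is only $O(\delta^4)$ against a located mass $\gtrsim\delta^3$. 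A clean repair of your version is to run the covering at a small universal scale $\delta$ rather than at scale $1$ (where the contradiction $1\lesssim\delta$ is immediate unless some $B_\delta(x_i)$ carries $\gtrsim\delta^3$ of $\Omega$), and then carry out the boundary-tangency volume estimate for that $\delta$-ball; alternatively, prove the tangency estimate directly for a radius-$1$ ball.
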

\begin{proof}
  
  The latter two requirements are satisfied by any choice of $B$,
  since $\abs{B - \Omega} \geq \abs{B} - \abs{\Omega \cap B_R} \geq
  \abs{B_1} - 1$ as $\abs{\Omega \cap B_R} = 1$, and $\abs{\partial
    \Omega \cap B} \leq \abs{\partial (\Omega \cap B_R)} \lesssim
  1$. Hence, it suffices to find a ball such that the first
  requirement $\abs{ \Omega \cap B } \gtrsim 1$ is satisfied.

  Consider a smooth symmetric convolution kernel $\varphi_{\delta}$ of
  support in $B_{\delta}$ with $\delta \ll 1$ to be determined. Note
  that for $\chi = \chi_{\Omega \cap B_R}$
  \begin{equation*}
    \int_{B_R} \abs{ \varphi_{\delta} \ast \chi - \chi} \lesssim \delta \int \abs{ \nabla \chi} \leq \delta \abs{ \partial(\Omega \cap B_R)} \lesssim \delta.
  \end{equation*}
  We choose $\delta$ sufficiently small such that 
  \begin{equation}\label{eq:convbound}
    \int_{B_R} \abs{ \varphi_{\delta} \ast \chi - \chi} < 1/2.
  \end{equation}
  This implies that at some point $x\in B_R$, we have
  $(\varphi_{\delta} \ast \chi)(x) > 1/2$. Since otherwise, if $\sup
  \varphi_{\delta} \ast \chi \leq 1/2$, we would have
  \begin{equation*}
    \int_{B_R} \abs{ \varphi_{\delta} \ast \chi - \chi } \geq \int_{\Omega} \frac{1}{2} = \frac{1}{2},
  \end{equation*}
  which contradicts \eqref{eq:convbound}. It follows then $B_1(x)$ is
  a ball satisfying all the conditions.
\end{proof}

We now deform the set inside $B$ in a way that the perimeter increases
not more than proportionally to the increase of the volume. This is a
crucial proposition which plays the same role as
Proposition~\ref{prop:deformboundary} in Section~\ref{sec:boundary}.
Actually, the remainder of the proof of Lemma~\ref{lem:moddensity} is
similar to that in Section~\ref{sec:boundary} after
Proposition~\ref{prop:deformboundary}: Instead of deforming the set
$\Omega - B_{r + h/2}$, we deform the set $\Omega \cap B$ given in
Lemma~\ref{lem:ballexist} to compensate for the volume in
$\abs{\Omega \cap A_{r-h/2, r+h/2}}$. We will omit the details, as the
arguments are parallel.
\begin{prop}\label{prop:vector}
  Suppose we are given a set $\Omega$ and a radius-$1$
  ball $B$ with 
  \begin{equation*}
    \abs{ \Omega \cap B } \gtrsim 1, \quad 
    \abs{ B - \Omega} \gtrsim 1, \quad \text{and} \quad 
    \abs{ \partial \Omega \cap B} \lesssim 1.
  \end{equation*}
  Then for every $\lambda \ll 1$, there exists a set $\Omega'$ with
  $\Omega' \Delta \Omega \subset\subset B$ and
  \begin{align}
    \label{eq:vector1} \abs{\Omega'} - \abs{\Omega} & \gtrsim \lambda; \\
    \label{eq:vector2} \abs{\partial \Omega' \cap B} - 
    \abs{\partial \Omega \cap B} & \lesssim \lambda; \\
    \label{eq:vector3} \int_{\Omega'} \phi - \int_\Omega \phi & \lesssim \lambda  \left( 1 + \int_{\Omega} \phi \right); \\
    \label{eq:vector4} \frac{1}{2} \iint_{\Omega' \times \Omega'}
    \frac{1}{\abs{x - y}} \ud x \ud y- \frac{1}{2} \iint_{\Omega \times \Omega}
    \frac{1}{\abs{x - y}} \ud x \ud y & \lesssim \lambda \frac{1}{2} \iint_{\Omega \times
      \Omega} \frac{1}{\abs{x - y}} \ud x \ud y.
  \end{align}
\end{prop}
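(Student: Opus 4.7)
The plan is to produce $\Omega'$ as the image $\Phi_\lambda(\Omega)$ of a smooth family of diffeomorphisms $\Phi_\lambda = \mathrm{id} + \lambda X + O(\lambda^2)$ generated by a vector field $X$ compactly supported in $B$ satisfying, uniformly in $\Omega$,
\begin{equation*}
  \|X\|_{C^2(\RR^3)} \lesssim 1 \quad \text{and} \quad \int_{\Omega \cap B} \mathrm{div}(X) \gtrsim 1.
\end{equation*}
Once such an $X$ is in hand, the four estimates follow by standard change-of-variables computations that parallel step-by-step the derivations of \eqref{eq:deformbounday1}--\eqref{eq:deformbounday4} in Proposition~\ref{prop:deformboundary}: \eqref{eq:vector1} from the Jacobian expansion $|\Omega'|-|\Omega| = \lambda\int_\Omega \mathrm{div}(X) + O(\lambda^2)$; \eqref{eq:vector2} from the area formula together with $\mathrm{cof}(\nabla \Phi_\lambda) = \mathrm{Id} + O(\lambda)$ and the hypothesis $|\partial \Omega \cap B| \lesssim 1$; \eqref{eq:vector3} from a Taylor expansion of $\phi$ along the flow, using $\sup_B|\nabla \phi|\lesssim 1$ and $\mathrm{diam}(B)=2$; and \eqref{eq:vector4} from a change of variables in both arguments of $1/|x-y|$ together with the mean-value theorem. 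The compact support of $X$ in $B$ ensures $\Omega' \Delta \Omega \subset\subset B$ automatically.

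The heart of the proof is constructing $X$. Because the shape and location of $\Omega \cap B$ inside $B$ are arbitrary, no fixed $X$ can satisfy the divergence lower bound uniformly; $X$ must depend on $\Omega$. Fix a small universal constant $\delta \ll 1$. By a slicing argument using the hypotheses $|\Omega \cap B|\gtrsim 1$, $|B-\Omega|\gtrsim 1$, and $|\partial \Omega \cap B|\lesssim 1$, I can find a radius $r_\delta \in [1-2\delta, 1-\delta]$ for which $|\Omega \cap \partial B_{r_\delta}|\lesssim 1$, $|\Omega \cap B_{r_\delta}|\gtrsim 1$, and $|B_{r_\delta}-\Omega|\gtrsim 1$. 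Choose $\alpha \sim 1$ so that $f := \chi_{\Omega \cap B_{r_\delta}} - \alpha \chi_{B_{r_\delta}-\Omega}$ has zero integral, and let $f_\delta = \rho_\delta \ast f$ be its mollification at scale $\delta$, which is smooth, mean-zero, and supported in $B_{1-\delta/2}$. By the Bogovskii construction applied to the star-shaped domain $B$, there exists $X \in C^\infty_c(B;\RR^3)$ with $\mathrm{div}(X) = f_\delta$ and $\|X\|_{C^2} \lesssim \|f_\delta\|_{C^1} \lesssim 1$, uniformly in $\Omega$ for our fixed choice of $\delta$. A direct computation using the symmetry of $\rho_\delta$ and the fact that the $\delta$-boundary layer of $\Omega$ inside $B$ has volume $\lesssim \delta |\partial \Omega \cap B| \lesssim \delta$ yields
\begin{equation*}
  \int_{\Omega \cap B} \mathrm{div}(X) = \int_{\Omega \cap B} f_\delta \;\geq\; |\Omega \cap B_{r_\delta}| - C\delta \;\gtrsim\; 1
\end{equation*}
once $\delta$ is chosen sufficiently small.

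The main obstacle will be this uniform-in-$\Omega$ construction of $X$: simultaneously controlling the $C^2$ norm (which forces mollification at a strictly positive scale) and the divergence integral (which is only preserved under mollification thanks to the perimeter bound). Thus the hypothesis $|\partial \Omega \cap B|\lesssim 1$ plays a double role --- it controls the perimeter increase under the deformation via the area formula, and it bounds the mollification error in the divergence integral, allowing us to fix $\delta$ as a universal constant. By contrast, once $X$ is constructed, the verification of \eqref{eq:vector1}--\eqref{eq:vector4} is essentially mechanical and entirely analogous to the corresponding computations in Proposition~\ref{prop:deformboundary}.
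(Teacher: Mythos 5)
Your overall strategy is the same as the paper's: realize $\Omega'$ as the flow $F_\lambda(\Omega)$ of a vector field $\xi$ compactly supported in $B$ with $\norm{\xi}_{C^2}\lesssim 1$ and $\int_{\Omega}\nabla\cdot\xi\gtrsim 1$, and then derive \eqref{eq:vector1}--\eqref{eq:vector4} from the Jacobian expansion, the area formula, and the Lipschitz bounds on $F_\lambda$, exactly as in Proposition~\ref{prop:deformboundary}. What is genuinely different is the construction of $\xi$ (the content of the paper's Lemma~\ref{lem:vector}). The paper sets $\xi=-\eta_\delta\nabla v$ with $v$ solving a Neumann problem whose source is a mollified, renormalized $\chi_\Omega$; compact support comes from the cutoff $\eta_\delta$, but this introduces a boundary term $\int_B \chi\,\nabla\eta_\delta\cdot\nabla v$ that must be shown small, which the paper does via $H^2$-regularity and Hardy's inequality, costing $\delta^{1/2}$. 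You instead prescribe $\nabla\cdot\xi$ directly as the mollification $f_\delta$ of a mean-zero source and invert it with Bogovskii's operator; since $\operatorname{supp} f_\delta$ is a concentric ball strictly inside $B$, the Bogovskii kernel keeps $\operatorname{supp}\xi$ in the convex hull of that ball, so compact support is automatic and the Hardy-inequality boundary estimate disappears. The lower bound on $\int_\Omega \nabla\cdot\xi$ then becomes transparent: by the symmetry of $\rho_\delta$, $\int_{\Omega}f_\delta=\int f\,(\rho_\delta\ast\chi_\Omega)$, the $\alpha$-part contributes zero after removing the mollifier (it integrates $\chi_\Omega$ over $B_{r_\delta}\setminus\Omega$), and the commutator error is $\lesssim\delta\abs{\partial\Omega\cap B}\lesssim\delta$ --- the hypothesis $\abs{\partial\Omega\cap B}\lesssim 1$ plays exactly the same role in both arguments. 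Two small caveats: the slicing step to find $r_\delta$ with $\abs{\Omega\cap\partial B_{r_\delta}}\lesssim 1$ is never used in what follows (the two volume bounds on $B_{r_\delta}$ are automatic for $\delta$ small), and the displayed inequality $\norm{X}_{C^2}\lesssim\norm{f_\delta}_{C^1}$ is not literally a Bogovskii estimate; one should take the Bogovskii bound in $W^{k,p}$ for $k$ large and $p$ finite and then use Sobolev embedding, picking up $\delta$-dependent constants that are harmless once $\delta$ is fixed universal, but should be acknowledged.
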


The proof of Proposition~\ref{prop:vector} follows a similar argument
as the proof of Proposition~\ref{prop:deformboundary}. Instead of
using an explicit formula to define $F_{\lambda}$, the deformation
$F_{\lambda}(x)$ in the current case is given by the solution map
generated by a vector field $\xi$:
\begin{equation*}
  \frac{\ud}{\ud \lambda} F_{\lambda} = \xi\bigl(F_{\lambda}\bigr), \qquad F_0(x) = x, 
\end{equation*}
where the vector field is constructed by the following lemma. 
\begin{lemma}\label{lem:vector}
  Under the assumptions of Proposition~\ref{prop:vector}, there exists
  $\xi: B \to \RR^3$ compactly supported such that
  \begin{equation*}
    \int_{\Omega} \nabla \cdot \xi = 1 
    \qquad \text{and} \qquad
    \norm{\xi}_{C^2(B)} \lesssim 1.
  \end{equation*}
\end{lemma}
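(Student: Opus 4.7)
The plan is to construct $\xi$ in three steps: first, locate two sub-balls $B^\pm \subset\subset B$ of a universal radius on which $\Omega$ is, respectively, dense and sparse; second, build a smooth, mean-zero ``dipole'' $f \in C^\infty_c(B)$ with $\int_\Omega f \gtrsim 1$; third, solve $\nabla\cdot\xi = f$ with $\xi$ compactly supported in $B$. After normalizing, the divergence theorem gives $\int_\Omega \nabla\cdot\xi = 1$, while the universality of the scale yields $\norm{\xi}_{C^2(B)} \lesssim 1$.

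The hard part is the first step: extracting a universal radius $r_0 > 0$ and a universal $\epsilon_0 \ll 1$ together with points $x^\pm \in B_{1-2r_0}$ satisfying
\begin{equation*}
  \bigl\lvert \Omega \cap B_{r_0}(x^+) \bigr\rvert \geq (1 - \epsilon_0)\abs{B_{r_0}},
  \qquad
  \bigl\lvert \Omega \cap B_{r_0}(x^-) \bigr\rvert \leq \epsilon_0 \abs{B_{r_0}}.
\end{equation*}
Lebesgue density alone gives such points, but at a scale depending on $\Omega$. To obtain a universal scale I would use the BV mollification estimate
\begin{equation*}
  \norm{\chi_\Omega \ast \eta_{r_0} - \chi_\Omega}_{L^1(B)}
  \lesssim r_0 \,\abs{\partial \Omega \cap B} \lesssim r_0,
\end{equation*}
where $\eta_{r_0}$ is a standard mollifier. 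By Chebyshev, the set where $\abs{\chi_\Omega \ast \eta_{r_0} - \chi_\Omega} > \epsilon_0$ has measure $\lesssim r_0/\epsilon_0$. Combined with $\abs{\Omega \cap B}, \abs{B - \Omega} \gtrsim 1$, this forces both $\{\chi_\Omega \ast \eta_{r_0} \geq 1 - \epsilon_0\}$ and $\{\chi_\Omega \ast \eta_{r_0} \leq \epsilon_0\}$ to have positive measure inside $B_{1 - 2r_0}$ once $r_0, \epsilon_0$ are sufficiently small but universal. This is precisely where the perimeter hypothesis $\abs{\partial \Omega \cap B} \lesssim 1$ is essential.

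For the second step, fix a bump $\varphi \in C^\infty_c(B_{r_0/2})$ with $\varphi \geq 0$ and $\int \varphi = 1$, and set $\psi^\pm(x) = \varphi(x - x^\pm)$. Since $\abs{B_{r_0/2}(x^+) \setminus \Omega} \leq \epsilon_0 \abs{B_{r_0}}$ and similarly at $x^-$, choosing $\epsilon_0$ universally small yields $\int_\Omega \psi^+ \geq 3/4$ and $\int_\Omega \psi^- \leq 1/4$. Then $f := \lambda(\psi^+ - \psi^-)$ with the appropriate $\lambda \in [1,2]$ satisfies $\int f = 0$, $\int_\Omega f = 1$, $f \in C^\infty_c(B)$, and $\norm{f}_{C^k(B)} \lesssim 1$ (the $C^k$ bound being inherited from the fixed profile $\varphi$ at the universal scale $r_0$). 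For the third step, I would invoke the Bogovski\u{\i} operator on $B$: since $f \in C^\infty_c(B)$ has zero mean, there exists $\xi \in C^\infty_c(B)$ with $\nabla\cdot\xi = f$ and $\norm{\xi}_{C^{k+1}(B)} \lesssim \norm{f}_{C^k(B)}$ by standard Schauder-type estimates for the solution of the divergence equation (see, e.g., Galdi's monograph on the Navier--Stokes equations). In particular $\norm{\xi}_{C^2(B)} \lesssim 1$, and the divergence theorem gives $\int_\Omega \nabla\cdot\xi = \int_\Omega f = 1$.
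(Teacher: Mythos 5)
Your proof is correct, but it takes a genuinely different route than the paper's. The paper constructs $\xi = -\eta_\delta\nabla v$ where $v$ solves a Neumann problem on $B$ with a mollified, cut-off, mean-normalized copy of $\chi_\Omega$ as right-hand side; the lower bound $\int_\Omega\nabla\cdot\xi\gtrsim 1$ is then an integration-by-parts computation that reduces to a variance term $\wb{\eta_\delta^3\chi}\,\overline{\eta_\delta^2(1-\chi)}\gtrsim\min\{\abs{B\cap\Omega},\abs{B-\Omega}\}$, with error terms controlled by the BV mollification estimate and a Hardy-inequality bound on $\nabla\eta_\delta\cdot\nabla v$ near $\partial B$; the $C^2$ bound is elliptic regularity. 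You instead use the mollification estimate once, upstream, to locate universal-scale balls $B_{r_0}(x^\pm)$ on which $\Omega$ has density $\geq 1-\epsilon_0$ and $\leq\epsilon_0$ respectively, then build an explicit dipole $f=\lambda(\psi^+-\psi^-)$ and invert the divergence via Bogovski\u{\i}. Both strategies exploit $\abs{\partial\Omega\cap B}\lesssim 1$ through the BV mollification estimate, and both exploit $\abs{\Omega\cap B},\abs{B-\Omega}\gtrsim 1$ to get the universal lower bound (the paper through the variance term, you through the existence of both a dense and a sparse point). Your construction is arguably more elementary and geometric --- it produces a concrete $f$ of bounded $C^\infty$ norm at a universal scale, rather than solving a Neumann problem with measurable data --- at the price of having to isolate specific good points. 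The paper's approach avoids choosing points but needs the boundary cutoff and Hardy's inequality to control the Neumann boundary layer. One small technical caveat in your Step 3: the Bogovski\u{\i} operator is a Calder\'on--Zygmund operator and does not directly give $C^k\to C^{k+1}$ (Schauder) bounds; the clean way to get the $C^2$ estimate is through $\norm{\xi}_{W^{3,p}(B)}\lesssim\norm{f}_{W^{2,p}(B)}$ for some $p>3$ followed by Sobolev embedding $W^{3,p}\hookrightarrow C^2$, which of course works here since $f$ is $C^\infty$ with all norms universally bounded.
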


Let us first prove the proposition assuming Lemma~\ref{lem:vector}.
\begin{proof}[Proof of Proposition~\ref{prop:vector}]
  We take $\Omega' = F_{\lambda}(\Omega)$ for the deformation
  $F_{\lambda}$ constructed above. Using $\norm{\xi}_{C^2(B)} \lesssim
  1$ from Lemma~\ref{lem:vector}, we have for $x \in \Omega$
  \begin{equation}\label{eq:estF2}
    \abs{F_{\lambda}(x) - x} \lesssim \lambda,  \quad 
    \abs{\nabla F_{\lambda}(x) - \mathrm{Id}} \lesssim \lambda, 
    \quad \text{and} \quad 
    \abs{J F_{\lambda}(x) - 1} \lesssim \lambda. 
  \end{equation}
  Thus, the estimates \eqref{eq:vector3} and \eqref{eq:vector4} follow
  from similar calculations in the proof of
  Proposition~\ref{prop:deformboundary}. The perimeter estimate is in
  fact more straightforward now as $\abs{\partial(\Omega \cap B)}
  \lesssim 1$, and hence we will omit the details.

  For the volume, note that by Liouville's formula and
  Lemma~\ref{lem:vector}, we have
  \begin{equation*}
    \frac{\rd}{\rd \lambda} \abs{F_{\lambda}(\Omega)} = \frac{\rd}{\rd \lambda} \int_{\Omega}  J F_{\lambda}(x) \ud x  = \int_{\Omega} (\nabla \cdot \xi)(F_{\lambda}(x)) J F_{\lambda}(x) \ud x. 
  \end{equation*}
  Since $\int_{\Omega} \nabla \cdot \xi = 1$ from
  Lemma~\ref{lem:vector}, we have
  \begin{equation*}
    \begin{aligned}
      \Biggl\lvert \frac{\rd}{\rd \lambda} \abs{F_{\lambda}(\Omega)} -
      1 \Biggr\rvert & = \Biggl\lvert \int_{\Omega} (\nabla \cdot
      \xi)(F_{\lambda}(x)) J F_{\lambda}(x) \ud x
      - \int_{\Omega} (\nabla \cdot \xi)(x) \ud x \Biggr\rvert \\
      & \leq \Biggl\lvert \int_{\Omega} (\nabla \cdot \xi)(x)
      (JF_{\lambda}(x) - 1) \ud x \Biggr\rvert + \Biggl\lvert
      \int_{\Omega} \Bigl( (\nabla \cdot \xi)(F_{\lambda}(x)) -
      (\nabla \cdot \xi)(x)\Bigr)J F_{\lambda}(x) \ud x \Biggr\rvert \\
      & \leq \norm{\xi}_{C^1(B)} \int_{\Omega} \abs{J F_{\lambda} - 1}
      + \norm{\xi}_{C^2(B)} \int_{\Omega}
      \abs{F_{\lambda}(x) - x} \abs{ JF_{\lambda}(x)} \ud x \\
      & \stackrel{\eqref{eq:estF2}}{\lesssim} \lambda
      \norm{\xi}_{C^2(B)} \lesssim \lambda,
    \end{aligned}
  \end{equation*}
  where the last inequality follows from
  $\norm{\xi}_{C^2(B)} \lesssim 1$.  Hence, we arrive at the estimate
  \eqref{eq:vector1} as
  \begin{equation*}
    \abs{\Omega'} - \abs{\Omega} = \abs{F_{\lambda}(\Omega)} - \abs{\Omega} = \int_0^{\lambda} \frac{\rd}{\rd \lambda'} \abs{F_{\lambda'}(\Omega)} \ud \lambda' \gtrsim \lambda. 
  \end{equation*}
\end{proof}

We conclude this section by proving Lemma~\ref{lem:vector}.
\begin{proof}[Proof of Lemma~\ref{lem:vector}]
  For $\delta \ll 1$ to be fixed later consider
  \begin{itemize}
  \item a smooth radial cut-off function $\eta_{\delta}$ of
    $B_{1-\delta}$ in $B$;
  \item a smooth symmetric convolution kernel $\varphi_{\delta}$ of
    support in $B_{\delta}$.
  \end{itemize}
  Let $\wb{f}$ denote the spatial average of $f$ on $B$.  Let $\chi$
  denote the characteristic function of $\Omega$, for simplicity.

  Solve the Neumann problem
  \begin{align}
    \label{eq:v} & - \Delta v = \varphi_{\delta} \ast \left[
      \eta_{\delta}^2 \left( \chi - \frac{\wb{\eta_{\delta}^2
            \chi}}{\wb{\eta_{\delta}^2}}\right)\right] &&
    \text{in }B, \\
    \label{eq:vbc} & \nu \cdot \nabla v = 0 && \text{on }\partial B.
  \end{align}
  Note that it is solvable because the r.h.s.{} has average zero on
  $\RR^d$ and is compactly supported in $B$. We set
  \begin{equation*}
    \xi = - \eta_{\delta} \nabla v
  \end{equation*}
  and note that $\xi$ is smooth uniformly in $\chi$ for fixed $\delta
  > 0$. More precisely, since $ \eta_{\delta}^2 \left( \chi -
    \frac{\wb{\eta_{\delta}^2 \chi}}{\wb{\eta_{\delta}^2}}\right)$ is
  bounded in $L^2$ uniformly in $\chi$, the r.h.s.{} $\varphi_{\delta} \ast \left[ \eta_{\delta}^2 \left( \chi -
      \frac{\wb{\eta_{\delta}^2
          \chi}}{\wb{\eta_{\delta}^2}}\right)\right]$ of \eqref{eq:v}
  is bounded in any norm uniformly in $\chi$. Therefore, by elliptic
  regularity theory $v$ and thus $\xi$ is bounded in any norm, in
  particular $\norm{\cdot}_{C^2}$ uniformly in $\chi$ for fixed
  $\delta > 0$.

  % We note that thanks to $\varphi_{\delta}$, we have the following
  % H\"older estimates
  % \begin{align*}
  %   & \norm{\varphi_{\delta} \ast \eta_{\delta}^2 \left( \chi -
  %       \frac{\wb{\eta_{\delta}^2
  %           \chi}}{\wb{\eta_{\delta}^2}}\right)}_{C^{1, \alpha}}
  %   \lesssim \frac{1}{\delta^{1+\alpha}} \norm{\eta_{\delta}^2 \left(
  %       \chi - \frac{\wb{\eta_{\delta}^2
  %           \chi}}{\wb{\eta_{\delta}^2}}\right)}_{C^0}
  %   \lesssim \frac{1}{\delta^{1+\alpha}}, \\
  %   & \norm{\varphi_{\delta} \ast \eta_{\delta}^2 \left( \chi -
  %       \frac{\wb{\eta_{\delta}^2
  %           \chi}}{\wb{\eta_{\delta}^2}}\right)}_{C^{0, \alpha}}
  %   \lesssim \frac{1}{\delta^{\alpha}} \norm{\eta_{\delta}^2 \left(
  %       \chi - \frac{\wb{\eta_{\delta}^2
  %           \chi}}{\wb{\eta_{\delta}^2}}\right)}_{C^0}
  %   \lesssim \frac{1}{\delta^{\alpha}}. 
  % \end{align*}
  % Hence we have by Schauder theory for $0 < \alpha < 1$
  % \begin{align*}
  %   & \norm{v}_{C^{3, \alpha}} \lesssim \frac{1}{\delta^{1 + \alpha}}, \\
  %   & \norm{v}_{C^{2, \alpha}} \lesssim \frac{1}{\delta^{\alpha}}
  % \end{align*}
  % and thus 
  % \begin{equation*}
  %   \norm{\xi}_{C^{2, \alpha}} \lesssim \norm{v}_{C^{3, \alpha}}
  %   + \frac{1}{\delta} \norm{v}_{C^{2, \alpha}} \lesssim \frac{1}{\delta^{1+\alpha}}. 
  % \end{equation*}
  Hence, it is enough to show 
  \begin{equation*}
    \int_{\Omega} \nabla \cdot \xi \geq \frac{1}{C} \min \bigl\{ \abs{B \cap \Omega}, \abs{B - \Omega} \bigr\} - C \delta \abs{B \cap \partial \Omega} 
    - C \delta^{1/2}. 
  \end{equation*}
  Indeed,
  \begin{equation*}
    \begin{aligned}
      \int_{\Omega} \nabla \cdot \xi & = \int_B \chi \nabla \cdot \xi
      = - \int_B \eta_{\delta} \chi \Delta v - \int_B \chi \nabla \eta_{\delta} \cdot \nabla v \\
      & \geq \int_B \eta_{\delta}^2 \left(\chi -
        \frac{\wb{\eta_{\delta}^2 \chi}}{\wb{\eta_{\delta}^2}}\right)
      \varphi_{\delta} \ast (\eta_{\delta} \chi) - \int_B \abs{\nabla
        \eta_{\delta} \cdot \nabla v} \\
      & \geq \int_B \eta_{\delta}^2 \left(\chi -
        \frac{\wb{\eta_{\delta}^2 \chi}}{\wb{\eta_{\delta}^2}}\right)
      \eta_{\delta} \chi - \int_B \abs{\varphi_{\delta} \ast
        (\eta_{\delta} \chi) - \eta_{\delta} \chi} - \int_B
      \abs{\nabla \eta_{\delta} \cdot \nabla v}.
    \end{aligned}
  \end{equation*}
  We consider the first term and note 
  \begin{multline*}
    \int_B \eta_{\delta}^2 \left(\chi - \frac{\wb{\eta_{\delta}^2
          \chi}}{\wb{\eta_{\delta}^2}}\right) \eta_{\delta} \chi =
    \frac{\abs{B}}{\wb{\eta_{\delta}^2}} \wb{\eta_{\delta}^3 \chi}
    \overline{\eta_{\delta}^2 (1 - \chi)} \gtrsim \abs{B_{1-\delta}
      \cap \Omega} \abs{B_{1-\delta} - \Omega} \\
    \geq \frac{1}{C} (\abs{B \cap \Omega} - C \delta) (\abs{B - \Omega} - C
    \delta) \geq \frac{1}{C} \min \bigl\{ \abs{B \cap \Omega}, \abs{B - \Omega}
    \bigr\} - C \delta.
  \end{multline*}
  We now treat the second term:
  \begin{equation*}
    \int_B \abs{\varphi_{\delta} \ast (\eta_{\delta} \chi) - \eta_{\delta} \chi}
    \lesssim \delta \int \abs{\nabla (\eta_{\delta} \chi)}
    \lesssim \delta ( \abs{B \cap \partial \Omega} + 1 ).
  \end{equation*}
  We finally address the last term: We note that the r.h.s.{} of
  \eqref{eq:v} is bounded in $L^2$ uniformly in $\delta$ and
  $\chi$. By $H^2$-regularity, this implies
  \begin{equation*}
    \int_B \abs{\nabla^2 v}^2 \lesssim 1. 
  \end{equation*}
  In view of the boundary condition \eqref{eq:vbc}, this implies by
  Hardy's inequality
  \begin{equation*}
    \int_B \frac{1}{(1 - \abs{x})^2} \abs{x\cdot \nabla v}^2 \ud x \lesssim 1. 
  \end{equation*}
  Since $\eta_{\delta}$ is radially symmetric: 
  \begin{equation*}
    \begin{aligned}
      \int_B \abs{\nabla \eta_{\delta} \cdot \nabla v} & \lesssim
      \frac{1}{\delta} \int_{B - B_{1-\delta}} \abs{x \cdot \nabla v} \ud x \\
      & \lesssim \frac{1}{\delta} \left( \int_{B - B_{1 - \delta}} (1
        - \abs{x})^2 \ud x \int_{B - B_{1 - \delta}} \frac{1}{(1 -
          \abs{x})^2}  \abs{x \cdot \nabla v}^2 \ud x \right)^{1/2} \\
      & \lesssim \frac{1}{\delta} (\delta^3)^{1/2} = \delta^{1/2}.
    \end{aligned}
  \end{equation*}
  The proof is concluded by combining the above estimates. 
\end{proof}

\section{Proof of Theorem~\ref{thm:ball}}
\label{sec:ball}

We now prove Theorem~\ref{thm:ball}.  It is more convenient to rescale
the problem such that we consider sets with volume equal to a unit
ball.
\begin{equation}
  E_{V, Z}(\Omega) :=  \abs{\partial \Omega} + \frac{V}{\abs{B_1}} \frac{1}{2} \iint_{\Omega \times \Omega} \frac{1}{\abs{x-y}} \ud x \ud y - Z \int_{\Omega} \frac{1}{\abs{x}} \ud x.
\end{equation}

The key element of the proof is the version of the quantitative
isoperimetric inequality \cites{Julin:14}:
\begin{lemma}\label{lem:quantisop}
  There exists a universal constant $c_{\text{isop}}$ such that
  $\forall\;\Omega\subset \RR^3$ with volume $\abs{\Omega} = \abs{B_1}$,
  \begin{equation}
    \abs{\partial \Omega} -\abs{\partial B_1} \geq c_{\text{isop}} \gamma(\Omega)
  \end{equation}
  where $\gamma(\Omega)$ is defined as 
  \begin{equation*}
    \gamma(\Omega) :=  
    \int_{B_1} \frac{1}{\abs{x}} \ud x - \int_{\Omega} \frac{1}{\abs{x}} \ud x.
  \end{equation*}
\end{lemma}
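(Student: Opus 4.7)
The plan is to adapt the now-classical selection principle strategy of Cicalese--Leonardi and reduce to Fuglede's stability theorem for nearly-spherical sets. First I would record the cheap fact that $\gamma(\Omega) \geq 0$ by the Riesz rearrangement inequality (both $1/\abs{x}$ and the symmetric decreasing rearrangement of $\chi_\Omega$ are radially decreasing, and the latter equals $\chi_{B_1}$ since $\abs{\Omega}=\abs{B_1}$). Since moreover $\gamma(\Omega) \leq \int_{B_1}\abs{x}^{-1}\ud x = 2\pi$, the desired estimate is trivial unless $\abs{\partial\Omega} - \abs{\partial B_1}$ is smaller than a fixed $\eta_0 \ll 1$. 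In this small-deficit regime, the sharp quantitative isoperimetric inequality of Fusco--Maggi--Pratelli gives $\abs{\Omega \,\Delta\, B_1(x_\Omega)} \lesssim (\abs{\partial\Omega} - \abs{\partial B_1})^{1/2}$ for some translation $x_\Omega$, so $\Omega$ is $L^1$-close to a unit ball.

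Next I would argue by contradiction: suppose there is a sequence $\Omega_n$ with $\abs{\Omega_n}=\abs{B_1}$ and $(\abs{\partial\Omega_n}-\abs{\partial B_1})/\gamma(\Omega_n) \to 0$. Applying the Cicalese--Leonardi selection principle to a suitable penalized functional that enforces the centering (barycenter of $\Omega_n$ at the origin, the special point singled out by $1/\abs{x}$), I replace $\Omega_n$ by smoother near-minimizers $U_n$ converging to $B_1$ in $C^{1,\alpha}$ while still violating the estimate. Writing $\partial U_n$ as a graph $\{(1+u_n(\omega))\omega : \omega\in S^2\}$ with small $u_n \in C^1(S^2)$ and using the volume constraint $\int u_n = -\tfrac{1}{2}\int u_n^2 + O(\norm{u_n}^3)$, Taylor expansion yields
\begin{align*}
\abs{\partial U_n} - \abs{\partial B_1} &= \tfrac{1}{2}\int_{S^2}\bigl(\abs{\nabla_{S^2} u_n}^2 - 2 u_n^2\bigr)\ud\omega + O(\norm{u_n}_{C^1}^3),\\
\gamma(U_n) &= \tfrac{1}{2}\int_{S^2} u_n^2 \ud\omega + O(\norm{u_n}_{C^1}^3).
\end{align*}
Decomposing $u_n = \sum_{\ell,m} u_{n,\ell}^m Y_\ell^m$ in spherical harmonics and using $-\Delta_{S^2} Y_\ell^m = \ell(\ell+1) Y_\ell^m$, the first display becomes $\sum_{\ell,m}(\ell(\ell+1)-2)(u_{n,\ell}^m)^2$. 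The $\ell=0$ coefficient is killed by the volume constraint; for $\ell\geq 2$ one has $\ell(\ell+1)-2\geq 4$, so provided the $\ell=1$ modes are suppressed we obtain $\abs{\partial U_n}-\abs{\partial B_1}\geq 2\gamma(U_n)(1+o(1))$, contradicting the assumption.

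The hardest part is precisely the handling of the $\ell=1$ modes, which correspond to rigid translations of $B_1$: for a translated ball $B_1(x_0)$ the perimeter deficit vanishes while $\gamma(B_1(x_0)) = \tfrac{2\pi}{3}\abs{x_0}^2 > 0$, so any proof must exploit the fact that the attractive potential $1/\abs{x}$ privileges the origin. In the selection step one must therefore either require that the barycenter of $U_n$ sit at the origin, or equivalently optimize over translations so that $\gamma(U_n)$ is as small as possible at that configuration; this forces the $\ell=1$ Fourier coefficients of $u_n$ to vanish at leading order, unlocking Fuglede's coercivity. Reconciling this centering gauge with the $L^1$-compactness of $\Omega_n$ (which only gives convergence up to translation) is the delicate step of the proof.
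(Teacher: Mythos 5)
This lemma is cited from Julin's work and is not proved in the paper, so there is no in-paper argument to compare yours against. Your overall route --- dispose of the large-deficit regime using $0\le\gamma(\Omega)\le2\pi$, pass to the nearly-spherical regime via Fusco--Maggi--Pratelli together with a Cicalese--Leonardi selection step, and finish with a Fuglede-type spherical-harmonic expansion --- is the standard modern strategy, and the Taylor expansions you record, namely $\abs{\partial U_n}-\abs{\partial B_1}\approx\tfrac12\sum_{\ell\ge 2,m}\bigl(\ell(\ell+1)-2\bigr)\bigl(u_{n,\ell}^m\bigr)^2$ and $\gamma(U_n)\approx\tfrac12\int_{S^2}u_n^2$, are correct once the $\ell=0$ mode is eliminated by the volume constraint.

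However, what you flag as ``the hardest part'' is in fact a \emph{counterexample} to the lemma as displayed, not merely a difficulty: taking $\Omega=B_1(x_0)$ with $0<\abs{x_0}\le1$, Newton's theorem gives $\gamma(\Omega)=\tfrac{2\pi}{3}\abs{x_0}^2>0$ while $\abs{\partial\Omega}-\abs{\partial B_1}=0$, so no universal $c_{\text{isop}}>0$ can satisfy the stated inequality for all $\Omega$ with $\abs{\Omega}=\abs{B_1}$. In your own expansion this is visible: the perimeter deficit vanishes on the $\ell=1$ sector while $\gamma$ does not. The lemma therefore needs an additional hypothesis --- that $\Omega$ is translated to minimize $\gamma$, equivalently a barycentre-type condition suppressing the $\ell=1$ modes --- and your proof, with the centering you propose, would establish this \emph{corrected} lemma, not the literal one. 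You should state that corrected version, and you should also not leave the centering as a remark: the delicate point, which you flag but do not carry out, is ensuring that the penalization used in the selection step actually forces the selected near-minimizers $U_n$ to inherit the barycentre normalization, since $L^1$-compactness alone only gives convergence to a ball up to translation. For the application in Theorem~\ref{thm:ball} the corrected lemma suffices, because one may first translate any competitor to the $\gamma$-minimizing position (which only lowers $\mf{E}_Z$, as the nuclear term rewards proximity to the origin) and then apply it; that reduction should also be made explicit.
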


\begin{proof}[Proof of Theorem~\ref{thm:ball}]
Define 
\begin{equation*}
  u = \frac{1}{\abs{x}} \ast (\chi_{B_1} - \chi_{\Omega}).
\end{equation*}
It is clear that $u$ is a superharmonic function in $B_1$. 
Note that we have 
\begin{equation}\label{eq:attracdiff}
  u(0) = \int_{B_1} \frac{1}{\abs{x}} \ud x - \int_{\Omega} \frac{1}{\abs{x}} \ud x = \gamma(\Omega), 
\end{equation}
and also
\begin{equation}\label{eq:coulombdiff}
  \begin{aligned}
    \iint_{B_1 \times B_1} \frac{1}{\abs{x -y}} \ud x\ud y -
    \iint_{\Omega \times \Omega} \frac{1}{\abs{x -y}} \ud x\ud y &
    \leq \iint_{\RR^3 \times \RR^3}
    \frac{\chi_{B_1}(x)}{\abs{x-y}} {\bigl(\chi_{B_1}(y) - \chi_{\Omega}(y)\bigr)} \ud x\ud y \\
    & = \int_{B_1} u \leq \abs{B_1} u(0),
  \end{aligned}
\end{equation}
where the last inequality follows as $u$ is superharmonic in
$B_1$. 

Now if $V-Z < c_{\text{isop}}$, we have for any $\Omega \subset \RR^3$
that $\abs{\Omega} = \abs{B_1}$, 
\begin{equation*}
  \begin{aligned}
    E_{V, Z}(B_1) - E_{V, Z}(\Omega) & \leq \abs{\partial B_1} -
    \abs{\partial \Omega} + (V - Z) u(0) \\
    & \leq - c_{\text{isop}} \gamma(\Omega) + (V - Z) \gamma(\Omega)
    \leq 0,
  \end{aligned}
\end{equation*}
where the first inequality follows from \eqref{eq:attracdiff} and
\eqref{eq:coulombdiff} and the second follows from
Lemma~\ref{lem:quantisop}. This concludes the proof that the optimal
set $\Omega$ must be the ball $B_1$.
\end{proof}

\bibliographystyle{amsxport}
\bibliography{TFDW}

\end{document}